\numberwithin{equation}{section}
\renewcommand{\thesubsection}{\thesection\alph{subsection}}
\newtheorem{thm}[equation]{Theorem}
\newtheorem{theorem}[equation]{Theorem}
\newtheorem{prop}[equation]{Proposition}
\newtheorem{proposition}[equation]{Proposition}
\newtheorem{lemma}[equation]{Lemma}
\newtheorem{cor}[equation]{Corollary}
\newtheorem{corollary}[equation]{Corollary}
\theoremstyle{definition}
\newtheorem{rem}[equation]{Remark}
\newtheorem{example}[equation]{Example}
\newtheorem{dfn}[equation]{Definition}
\DeclareMathAlphabet{\cat}{OT1}{cmss}{m}{sl}
\title
\keywords
{Norm varieties, Chow groups and motives, Steenrod operations.
{\em Mathematical Subject Classification (2010):}
14C25}
\author
\author
\address
{
Institut de Math\'ematiques de Jussieu\\
%UPMC Sorbonne Universit\'es\\
Universit{\'e} Pierre et Marie Curie\\
%4 place Jussieu\\
%F-75252
Paris\\
%%CEDEX 05\\
FRANCE}
\email {karpenko {\it at} math.jussieu.fr}
\address
{Department of Mathematics\\
University of California\\
Los Angeles\\
CA\\
%90095-1555\\
USA}
\email{merkurev {\it at} math.ucla.edu}
\date
\thanks
{Supported by the Max-Planck-Institut f\"ur Mathematik in Bonn}
\thanks{
The work of the second author has been supported by the
NSF grant DMS \#0652316}
\begin{document}

{
\newcommand{\s}{s}
\newcommand{\cM}{\mathcal{M}}
\newcommand{\Steen}{S}
\newcommand{\ISteen}{\mathbb{S}}
\newcommand{\sq}{\operatorname{sq}}
\newcommand{\st}{\operatorname{st}}

\newcommand{\onto}{\rightarrow\!\!\rightarrow}
\newcommand{\codim}{\operatorname{codim}}
\newcommand{\rdim}{\operatorname{rdim}}

\newcommand{\colim}{\operatorname{colim}}
\newcommand{\Div}{\operatorname{div}}
\newcommand{\Td}{\operatorname{Td}}
\newcommand{\td}{\operatorname{td}}
\newcommand{\itd}{\operatorname{itd}}

\newcommand{\ch}{\mathop{\mathrm{ch}}\nolimits}
\newcommand{\fch}{\mathop{\mathfrak{ch}}\nolimits}

\newcommand{\trdeg}{\operatorname{tr.deg}}

\newcommand{\SB}{X}

\newcommand{\Br}{\mathop{\mathrm{Br}}}
\newcommand{\Gal}{\mathop{\mathrm{Gal}}}
\renewcommand{\Im}{\mathop{\mathrm{Im}}}
\newcommand{\Pic}{\mathop{\mathrm{Pic}}}
\newcommand{\ind}{\mathop{\mathrm{ind}}}
\newcommand{\coind}{\mathop{\mathrm{coind}}}

\newcommand{\rk}{\mathop{\mathrm{rk}}}
\newcommand{\CH}{\mathop{\mathrm{CH}}\nolimits}

\newcommand{\BCH}{\mathop{\overline{\mathrm{CH}}}\nolimits}

\newcommand{\SO}{\operatorname{\mathrm{SO}}}
\newcommand{\OO}{\operatorname{\mathrm{O}}}

\newcommand{\Sp}{\operatorname{\mathrm{Sp}}}

\newcommand{\PGSp}{\operatorname{\mathrm{PGSp}}}

\newcommand{\Spin}{\operatorname{\mathrm{Spin}}}

\newcommand{\PGO}{\operatorname{\mathrm{PGO}}}

\newcommand{\PGL}{\operatorname{\mathrm{PGL}}}

\newcommand{\GL}{\operatorname{\mathrm{GL}}}

\newcommand{\SL}{\operatorname{\mathrm{SL}}}

\newcommand{\Ch}{\mathop{\mathrm{Ch}}\nolimits}
\newcommand{\BCh}{\mathop{\overline{\mathrm{Ch}}}\nolimits}

\newcommand{\TCh}{\mathop{\tilde{\mathrm{Ch}}}\nolimits}
\newcommand{\tCH}{\mathop{\mathrm{\widetilde{CH}}}\nolimits}

\newcommand{\IBCH}{\mathop{\bar{\mathrm{CH}}}\nolimits}
\newcommand{\BCHE}{\mathop{\mathrm{Che}}\nolimits}

\newcommand{\Stab}{\mathop{\mathrm{Stab}}\nolimits}

\newcommand{\res}{\mathop{\mathrm{res}}\nolimits}
\newcommand{\cores}{\mathop{\mathrm{cor}}\nolimits}

\newcommand{\cd}{\mathop{\mathrm{cd}}\nolimits}
\newcommand{\Gcd}{\mathop{\mathfrak{cd}}\nolimits}

\newcommand{\pr}{\operatorname{\mathit{pr}}}

\newcommand{\inc}{\operatorname{\mathit{in}}}

\newcommand{\mult}{\operatorname{mult}}

\newcommand{\Char}{\mathop{\mathrm{char}}\nolimits}
\newcommand{\Dim}{\mathop{\mathrm{Dim}}\nolimits}
\newcommand{\id}{\mathrm{id}}
\newcommand{\coker}{\mathrm{coker}}
\newcommand{\Z}{\mathbb{Z}}
\newcommand{\F}{\mathbb{F}}
\newcommand{\A}{\mathbb{A}}
\newcommand{\PP}{\mathbb{P}}
\newcommand{\Q}{\mathbb{Q}}
\newcommand{\BBF}{\mathbb{F}}
\newcommand{\HH}{\mathbb{H}}
\newcommand{\C}{\mathbb{C}}
\newcommand{\R}{\mathbb{R}}
\newcommand{\Gm}{\mathbb{G}_{\mathrm{m}}}
\newcommand{\Ga}{\mathbb{G}_{\mathrm{a}}}
\newcommand{\cF}{\mathcal F}
\newcommand{\cO}{\mathcal O}
\newcommand{\cE}{\mathcal E}
\newcommand{\cT}{\mathcal T}
\newcommand{\cA}{\mathcal A}
\newcommand{\cB}{\mathcal B}

\newcommand{\Mor}{\operatorname{Mor}}
\newcommand{\Spec}{\operatorname{Spec}}
\newcommand{\End}{\operatorname{End}}
\newcommand{\Hom}{\operatorname{Hom}}
\newcommand{\Aut}{\operatorname{Aut}}

\newcommand{\Tors}{\operatorname{Tors}}

\newcommand{\an}{0}
\newcommand{\op}{\mathrm{op}}

\newcommand{\X}{\mathfrak{X}}
\newcommand{\Y}{\mathcal{Y}}

\newcommand{\pt}{\mathbf{pt}}
\newcommand{\PS}{\mathbb{P}}

\newcommand{\type}{\mathop{\mathrm{type}}}

\newcommand{\Coprod}{\operatornamewithlimits{\textstyle\coprod}}
\newcommand{\Prod}{\operatornamewithlimits{\textstyle\prod}}
\newcommand{\Sum}{\operatornamewithlimits{\textstyle\sum}}
\newcommand{\Oplus}{\operatornamewithlimits{\textstyle\bigoplus}}

\newcommand{\disc}{\operatorname{disc}}

\newcommand{\<}{\left<}
\renewcommand{\>}{\right>}
\renewcommand{\ll}{\<\!\<}
\newcommand{\rr}{\>\!\>}

\newcommand{\stiso}{\overset{st}\sim}
\newcommand{\miso}{\overset{m}\sim}

\newcommand{\Dfn}{\stackrel{\mathrm{def}}{=}}

\marginparwidth 2.5cm
\newcommand{\Label}{\label}

\newcommand{\compose}{\circ}
\newcommand{\ur}{\mathrm{ur}}
\newcommand{\Ker}{\operatorname{Ker}}
\newcommand{\TCH}{\Tors\CH}
\newcommand{\CM}{\operatorname{CM}}
\newcommand{\BCM}{\operatorname{\overline{CM}}}

\newcommand{\Ann}{\operatorname{Ann}}

\newcommand{\corr}{\rightsquigarrow}

\newcommand{\IW}{\mathfrak{i}_0}
\newcommand{\iw}{\mathfrak{i}}
\newcommand{\jw}{\mathfrak{j}}

\newcommand{\Hight}{\mathfrak{h}}
\newcommand{\Height}{\mathfrak{h}}
\newcommand{\Dh}{\mathfrak{d}}

\newcommand{\IS}{i_S}

\newcommand{\Sym}{\operatorname{Sym}}
\newcommand{\D}{D}

\newcommand{\BC}{\ast}
\newcommand{\NBC}{\compose}
\newcommand{\ABC}{\star}
\newcommand{\NC}{\compose}
\newcommand{\AC}{\bullet}

\newcommand{\Fields}{\mathbf{Fields}}
\newcommand{\fgFields}{\mathbf{fgFields}}
\newcommand{\Bin}{\mathbf{2^0}}
\newcommand{\Sets}{\mathbf{pSets}}

\newcommand{\eps}{\varepsilon}

\renewcommand{\phi}{\varphi}

\newcommand{\RatM}{\dashrightarrow}
\newcommand{\Place}{\dashrightarrow}

\newcommand{\WR}{\mathcal{R}}

\newcommand{\mf}{\mathfrak}

\begin{abstract}
Let $p$ be a prime integer and $F$ a field of characteristic $0$.
Let $X$ be the {\em norm variety} of a symbol in
the Galois cohomology group $H^{n+1}(F,\mu_p^{\otimes n})$ (for some $n\geq1$), constructed in the proof of the
Bloch-Kato conjecture.
The main result of the paper
%(proved for $p=2$ earlier by A. Vishik)
affirms that the function field $F(X)$ has the following property:
for any equidimensional variety $Y$, the change
of field homomorphism $\CH(Y)\to\CH(Y_{F(X)})$
of Chow groups with coefficients in integers localized at $p$
is surjective in codimensions $< (\dim X)/(p-1)$.
One of the main ingredients of the proof is a computation of Chow groups
of a (generalized) Rost motive (a variant of the main result not relying on this is given in
Appendix).
Another important ingredient is {\em $A$-triviality} of $X$,
the property saying that
the degree homomorphism on $\CH_0(X_L)$ is injective for any field extension $L/F$ with $X(L)\ne\emptyset$.
The proof involves
the theory of rational correspondences reviewed in Appendix.

\bigskip
\noindent
{\sc R\'esum\'e.}
Pour un nombre premier $p$ et un corps $F$ de caract\'eristique $0$,
soit $X$ la {\em vari\'et\'e de norme} d'un symbole dans le groupe de cohomology galoisienne
$H^{n+1}(F,\mu_p^{\otimes n})$ (avec certain $n\geq1$) construite en cours de d\'emonstration
de la conjecture de Bloch-Kato.
Le r\'esultat principal de cet article
%(obtenu pour $p=2$ plus t\^ot par A. Vishik)
affirme que le corps des fonctions $F(X)$ a la propri\'et\'e suivante:
pour toute vari\'et\'e \'equidimensionnelle $Y$, l'homomorphisme de changement de corps
$\CH(Y)\to\CH(Y_{F(X)})$ de groupes de Chow \`a coefficients entiers localis\'es en $p$ est surjectif en codimension
$< (\dim X)/(p-1)$.
Une des composantes principales de la preuve est le calcul de groupes de Chow du motif de Rost
g\'en\'eralis\'e
(un variant du r\'esultat principal ind\'ependant de ceci est propos\'e dans Appendix).
Un autre ingr\'edient important est la {\em $A$-trivialit\'e} de $X$, la propri\'et\'e
qui dit que
pour toute extension de corps $L/F$ avec $X(L)\ne\emptyset$,
l'homomorphisme de degr\'e pour $\CH_0(X_L)$ est injectif.
La preuve fait appara\^itre la th\'eorie de correspondances rationnelles revue dans Appendix.
\end{abstract}

\maketitle
\vspace{-3em}
\tableofcontents

\section
{Introduction}

Let $n$ be a positive integer and $p$ a prime integer.
A smooth complete geometrically irreducible variety $X$ over a field $F$
of characteristic $0$ is a \emph{$p$-generic splitting variety} for a symbol
$s\in H^{n+1}(F; \mu^{\otimes n}_p)$
if $s$ vanishes over a field extension $K/F$ if and only if $X$ over $K$ has a
closed point of degree prime to $p$.
A \emph{norm variety} of $s$ is a $p$-generic splitting variety of the smallest dimension $p^n-1$.
Norm varieties played an important role in the proof of the Bloch-Kato conjecture (see \cite{MR2811603}).

Let $Y$ be a smooth variety over $F$.
Write $\CH^i(Y)$ for the Chow group with coefficients in the integers
localized at $p$ and
$\widetilde{\CH^i}(Y)$ for the factor group of the Chow group $\CH^i(Y)$
modulo $p$-torsion elements and $p\CH^i(Y)$.
In \cite[Theorem 1.3]{MR2503240},
K.~Zainoulline proved, using the Landweber-Novikov operations in algebraic cobordism
theory, that every $Y$ and every norm variety $X$ of $s$ enjoy the following property:
if $i<(p^n-1)/(p-1)$, every class $\alpha$
in $\widetilde{\CH^i}(Y_{\bar F})$, where $\bar F$ is an algebraic closure of $F$, such that
$\alpha_{\bar F(X)}$ is $F(X)$-rational, is $F$-rational itself, i.e., $\alpha$ belongs to the image of the map
$\widetilde{\CH^i}(Y)\to \widetilde{\CH^i}(Y_{\bar F})$.
This statement is in the spirit of the Main Tool Lemma of A.~Vishik
\cite{MR2305424}.

In the present paper we improve this result by showing that every cycle in the Chow group $\CH^i(Y_{F(X)})$
is already defined over $F$, i.e, it comes from $\CH^i(Y)$.
More precisely, we prove the following theorem
(see Theorem \ref{mainZp} for a stronger statement and the proof):

\begin{theorem}
\label{intromain}
Let $F$ be a field of characteristic $0$ and
let $X$ be an  $A$-trivial $p$-generic splitting variety of a symbol
in $H^{n+1}(F, \mu^{\otimes n}_p)$.
Then the change of field homomorphism
$
\CH^i(Y )\to  \CH^i(Y_{F(X)})
$
is surjective
if $i< (p^n-1)/(p-1)$ for any equidimensional (not necessarily smooth) variety $Y$ over $F$.
Moreover, the bound $(p^n-1)/(p-1)$ is sharp.
\end{theorem}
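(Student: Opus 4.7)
The plan is to combine the motivic decomposition of $X$ involving the generalized Rost motive with the $A$-triviality of $X$, through the theory of rational correspondences reviewed in the appendix, and in this way to promote Zainoulline's earlier result (valid for $\widetilde{\CH^i}$) to a statement about the full Chow groups $\CH^i$ with $\Z_{(p)}$-coefficients.

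First I would convert the surjectivity assertion into a statement about cycles on $Y \times X$. Since $\CH^i(Y_{F(X)})$ is the colimit of $\CH^i(Y \times U)$ over dense opens $U \subset X$, and by the localization exact sequence every cycle on $Y \times U$ extends to $Y \times X$, the assertion becomes: for every $\alpha \in \CH^i(Y \times X)$ with $i < d := (p^n-1)/(p-1)$, there exists $\beta \in \CH^i(Y)$ so that $\alpha - \pr_Y^*\beta$ is rationally equivalent to a cycle supported on $Y \times Z$ for some proper closed $Z \subsetneq X$. This recasts the theorem as a structural question about $\CH^i(Y\times X)$ and the generic fiber restriction.

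Next I would exploit the motivic decomposition $M(X)_{(p)} = R \oplus N$, where $R$ is the generalized Rost motive of $X$. The Chow groups of $R$, which are computed earlier in the paper as $\CH^*(R_L) \simeq \Z_{(p)}[t]/(t^p)$ with $t$ in codimension $d$ for every field extension $L/F$, satisfy in particular $\CH^j(R_L) = 0$ for $0 < j < d$. Decomposing $\alpha = \alpha_R + \alpha_N$ under the associated projectors, the vanishing of $\CH^j(R_L)$ in the relevant codimensions should identify the contribution of $\alpha_R$ modulo cycles supported on $Y \times Z$ with the pullback of a cycle from $Y$, since in this range the only nonzero Chow group contribution from $R$ comes from its unit summand $\CH^0(R) = \Z_{(p)}$. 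The complementary summand $N$ is handled by induction in $n$ (matching the inductive construction of norm varieties), together with a dimension-count showing $N$ cannot contribute to the generic fiber in this codimension range.

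The hard part will be making these identifications rigorous over $F$ rather than over a splitting field of $X$, and this is precisely where the $A$-triviality hypothesis enters. Via the theory of rational correspondences, $A$-triviality of $X$ furnishes a correspondence from $X_{F(X)}$ to $X$ that is uniquely determined up to degree on zero-cycles, and this correspondence, when composed with $\alpha$, produces the sought lift $\beta \in \CH^i(Y)$; this replaces the use of Steenrod and Landweber--Novikov operations from Zainoulline's weaker argument, which was only able to recover cycles modulo $p$-torsion and multiples of $p$. For the sharpness of the bound I would test the statement at $i=d$ using $Y=X$: the class $t \in \CH^d(R_{F(X)})$ that becomes available once the Rost motive splits over $F(X)$ is not in the image from $\CH^d(X)$ (where any lift of $t$ is only $p$-divisibly represented), producing a cycle in $\CH^d(X_{F(X)})$ not coming from $\CH^d(X)$ and showing that the bound $i < d$ cannot be relaxed to $i \le d$.
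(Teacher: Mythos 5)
Your overall architecture (Rost motive plus $A$-triviality plus rational correspondences) matches the paper's, but the central computational input is wrong. You claim that $\mathrm{CH}^*(R_L)\simeq\mathbb{Z}_{(p)}[t]/(t^p)$ for \emph{every} extension $L/F$, and in particular that $\mathrm{CH}^j(R_L)=0$ for $0<j<b$ (your $d$), and you let this vanishing carry the whole argument. That is false over non-splitting fields: the paper's Theorem \ref{main} shows $\mathrm{CH}^j(\mathcal{R})\simeq\mathbb{Z}/p\mathbb{Z}$ for $j=bk-p^i+1$ with $1\le k\le p-1$ and $1\le i\le n-1$, so already for $p=2$, $n=2$ one has $\mathrm{CH}^2(\mathcal{R})=\mathbb{Z}/2\mathbb{Z}$ with $0<2<b=3$ (the classical torsion in the Chow group of the Rost motive of a Pfister quadric). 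What the proof actually needs is not vanishing but \emph{surjectivity} of $\rho^*\mathrm{CH}^j(X)\to\rho^*\mathrm{CH}^j(X_{F(y)})$ for $j<b$ and all points $y$ of $Y$; this holds because the torsion generators are the classes $\widetilde Q_i(\delta)$ built functorially from the symbol, hence defined over the base field --- a genuinely nontrivial fact resting on the Voevodsky/Merkurjev--Suslin computation of the motivic cohomology of the simplicial scheme $\mathcal{X}$, not on a dimension count. As written, your step ``the only nonzero contribution from $R$ in this range is the unit summand'' is simply not true, and the argument collapses there.

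The treatment of the complementary summand $N$ in $M(X)=R\oplus N$ is the second gap: the proposed ``induction on $n$ plus dimension count'' is unsubstantiated, and also unnecessary. The paper never analyzes $N$. Instead (Lemma \ref{xi v xi}, Corollary \ref{cor xi v xi}, Proposition \ref{general}) it observes that since $X$ is $A$-trivial and the Rost projector $\rho$ has multiplicity $1$, one has $\rho_*[\xi]=[\xi]$ for the generic point $\xi$ of $X$, whence any $\alpha\in\mathrm{CH}(X\times Y)$ and $\alpha\circ\rho$ have the \emph{same} image in $\mathrm{CH}(Y_{F(X)})$; so one may discard the $N$-component outright, and it remains only to show that $\mathrm{CH}^m(X\times Y)\circ\rho$ is generated by external products coming from $\rho^*\mathrm{CH}(X)\otimes\mathrm{CH}(Y)$, which is done by induction on $\dim Y$ via localization. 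This is precisely where $A$-triviality is consumed; your description of it as furnishing ``a correspondence from $X_{F(X)}$ to $X$'' is close in spirit but does not by itself produce the lift $\beta$. Your sharpness test at $i=b$ with $Y=X$ is essentially the paper's (the class $H$, equivalently the generator of $\mathrm{CH}^b(\mathcal{R}_{F(X)})=\mathbb{Z}_{(p)}$, whose rational preimage is only $p\mathbb{Z}_{(p)}$ by Theorem \ref{main}), and that part is fine.
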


The $A$-triviality property for $X$ means that the degree map $\deg:\CH_0(X_K)\to\Z_{(p)}$
is an isomorphism (i.e., the kernel $A(X_K)$ of the degree map is trivial)
for any field extension $K/F$ such that $X$ has a point over $K$.
(We believe that the $A$-triviality condition
should be also imposed in the statement of \cite[Theorem 1.3]{MR2503240}.)
Our proof of Theorem \ref{intromain} is ``elementary"
in the sense that it does not use the algebraic cobordism theory.
It is based on computation of Chow groups of the corresponding Rost
motive, an approach applied by A. Vishik in \cite[Remark on Page
665]{MR2804267} in order to obtain the conclusion of Theorem \ref{intromain}
for Pfister quadrics (with $p=2$).

In Section
\ref{section A-triviality of standard norm varieties}
we prove that the standard norm varieties (corresponding to nontrivial symbols)
constructed in \cite{MR2220090} are $A$-trivial,
so that Theorem \ref{intromain} can be applied to such
varieties.
%\footnote{For {\em norm quadrics}, which are standard norm varieties with $p=2$,
%$A$-triviality is well known and easy; the result given by Theorem \ref{intromain} has been
%obtained by A. Vishik in \cite[Remark on Page 665]{MR2804267}
%using a similar method based on \cite[Lemma 88.5]{EKM}.}
In fact, we prove more (see Theorem \ref{chowmain} for a more explicit statement and the proof):

\begin{theorem}
%\label{chowmain}
Let $X$ be a standard norm variety of a nontrivial symbol over a field $F$ of characteristic $0$.
Then for any field extension $K/F$,
the degree map $\deg:\CH_0(X_K)\to \Z_{(p)}$ is injective.
\end{theorem}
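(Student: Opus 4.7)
The plan is to establish $A$-triviality of a standard norm variety $X$ of a nontrivial symbol by induction on the weight $n$ of the symbol, exploiting Rost's inductive construction of such varieties from \cite{MR2220090}.

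For the base case $n=1$ the symbol lies in $H^2(F,\mu_p)$, and the standard norm variety is the Severi--Brauer variety of the associated degree-$p$ central simple algebra $A$. Classically $\CH_0$ of this Severi--Brauer variety over an arbitrary extension $K/F$ is infinite cyclic, generated by a class of degree $\ind(A_K)$, so the degree map is visibly injective after localizing at $p$.

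For the inductive step I would exploit that a standard norm variety $X_{n+1}$ of weight $n+1$ admits a natural rational fibration $X_{n+1} \dashrightarrow X_n$ over a standard norm variety $X_n$ of weight $n$, whose generic fiber is essentially a weight-$1$ norm variety (a Severi--Brauer-type variety) over the function field $F(X_n)$. Given a class $\alpha$ in the kernel of the degree map on $\CH_0(X_{n+1,K})$, the strategy would be to push $\alpha$ forward along this rational map using the rational correspondence machinery reviewed in the Appendix, obtaining a $0$-cycle on $X_n$ over a suitable extension $K'/K$ that vanishes by the inductive hypothesis; tracing the vanishing back through the fibration --- controlled fiberwise by the base case --- would then yield $\alpha=0$.

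The main obstacle is that Rost's construction does not present $X_{n+1}$ as a genuine projective or fiber bundle over $X_n$: the structural map is only rational, so neither the projective bundle formula nor a standard Gysin sequence applies directly. This is precisely why the rational correspondence theory of the Appendix is indispensable: it allows transport of $0$-cycles across a birational correspondence with controlled indeterminacy. A second technical ingredient that likely plays a role is the numerical fact that the top characteristic number of a standard norm variety is not divisible by $p^2$ (the $\nu_n$-property), which supplies the prime-to-$p$ denominators needed for the pushforward--pullback argument to localize correctly with $\Z_{(p)}$-coefficients and to rule out the existence of ``fake'' degree-zero cycles on $X_{n+1,K}$.
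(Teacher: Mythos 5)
Your overall inductive frame (induction on $n$, base case the Severi--Brauer variety, rational-correspondence calculus to move $0$-cycles across the indeterminacy of the birational structure maps) matches the paper, but the engine of the inductive step is missing and the substitute you propose does not work. The structural rational map is $X_{n+1}\dashrightarrow \widetilde S^p(X_n)$ (the symmetric power with the diagonals removed), not $X_{n+1}\dashrightarrow X_n$, and its fibers are norm hypersurfaces $N_{E/F}=a$ for varying degree-$p$ \'etale algebras $E$, i.e.\ torsors under norm-one tori, not Severi--Brauer-type varieties. Pushing a degree-zero $0$-cycle forward to the base and observing that it vanishes there by induction says nothing about the cycle upstairs: the kernel of that push-forward is precisely what has to be controlled, and two points of $X_{n+1}$ generally lie in fibers over distinct base points attached to non-isomorphic algebras $E_1\ne E_2$, so no fiberwise argument can compare them. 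The paper's proof instead compares two points $x_1,x_2$ by producing a \emph{common auxiliary weight-two standard norm variety}: by Suslin--Joukhovitski (Theorem 5.6 of their paper) there is a $2$-symbol $\{b_0,b_1\}$ dividing $\{a_0,\dots,a_{n-1}\}$ and split by both residue algebras $E_1=F\{z_1\}$ and $E_2=F\{z_2\}$; the associated variety $X'=W(b_0,b_1,a_n)$ is $A$-trivial because, once it has a rational point, it is birational to the variety of $\mathrm{SL}_1(A)$ for a degree-$p$ algebra $A$, which is retract rational by Saltman. The images of $x_1$ and $x_2$ are identified on $X'$ and transported back to $X$ up to a prime-to-$p$ multiple, the induction hypothesis entering only to guarantee that the Weil transfers $R_{E_i/F}((X_n)_{E_i})\times\mathbb{P}(E_i)$ are $A$-trivial. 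None of this is visible in your sketch, and the chain-lemma-type input together with Saltman's retract rationality are the two ingredients you cannot do without.

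Two further points. The $\nu_n$-property of the top characteristic number plays no role in this theorem (it is used only in the appendix on special correspondences); the prime-to-$p$ multiplicities here come instead from $X_n$ being a $p$-generic splitting variety, which yields a prime-to-$p$ correspondence from the auxiliary Severi--Brauer variety to $X_n$. Moreover, $A$-triviality only gives injectivity of the degree map over fields $K$ with $X(K)\neq\emptyset$; for the full statement one must also compare closed points of degree $p$ over $p$-special fields where $X$ has no rational point, which the paper does by rerunning the common-auxiliary-variety construction and invoking the Panin/Merkurjev--Suslin computation of $\operatorname{CH}_0$ of Severi--Brauer varieties. Your proposal does not address this second half at all.
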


In the proof we use the theory of rational correspondences developed by M.~Rost (unpublished)
and B.~Kahn/R.~Sujatha in \cite{KS}.
We review this theory in Appendix \ref{Rational correspondences}.
Another ingredient of the proof, a computation of Chow groups of Rost motives, is presented in Appendix
\ref{Chow groups of Rost motives}.
A variant of the main theorem valid in any characteristic $\ne p$ and involving the Steenrod operations
is given in Appendix \ref{Special correspondences}.

In Sections \ref{Abstract Rost motives} and \ref{Generic splitting varieties}
we develop a theory of (abstract) Rost motives.

\medskip
We use the following notation and conventions.
The base field $F$ is of arbitrary characteristic if not specified
otherwise (it is of characteristic $\ne p$, $p$ a fixed prime,
most of the time, and of characteristic $0$ in several places).
An \emph{$F$-variety} over a field $F$ is a separated scheme of finite type over $F$.

We fix a commutative unital ring $\Lambda$ and write $\CH=\CH_*$ for the Chow
group with coefficients in $\Lambda$.
For any integer $i$ and equidimensional $F$-variety $Y$,
we write $\CH^i(Y)$ for the Chow group $\CH_{\dim Y-i}(Y)$.

Many events in the paper happen in the \emph{category of Chow motives} (with coefficients in $\Lambda$,
see \cite{EKM}).
A \emph{Chow motive} is a pair $(X,\rho)$, where $X$ is a smooth complete variety over $F$ and
$\rho$ is a projector (idempotent) in the endomorphism ring of the motive $M(X)$ of $X$.
We say that a motive $M$ \emph{lives on $X$}, if $M\simeq(X,\rho)$ for some $\rho$ as above.

\bigskip
\noindent
{\sc Acknowledgements.}
The authors thank Markus Rost for useful comments and suggestions.

\section
{$A$-trivial varieties}

Let $X$ be a smooth complete irreducible $F$-variety.
Let $d$ be its dimension, and let $\rho$ be a fixed element of the Chow
group $\CH^d(X\times X)$ (considered as a correspondence $X\corr X$).

\begin{lemma}
\label{xi v xi}
The following two conditions on $\rho$ are equivalent:
\begin{enumerate}
\item
for any $F$-variety $Y$,
the image of any $\alpha\in\CH(X\times Y)$ under the pull-back to
$\CH(Y_{F(X)})$ coincides with the image of $\alpha\compose\rho$;
\item
$\rho_*[\xi]=[\xi]$, where $\xi$ is the generic point of $X$ and  $[\xi]$
is its class in the Chow group $\CH_0(X_{F(X)})$.
\end{enumerate}
\end{lemma}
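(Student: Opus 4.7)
The plan is to reduce both implications to a single base-change formula: for any $\beta\in\CH(X\times Y)$, the pullback of $\beta$ along the flat morphism $Y_{F(X)}\to X\times Y$ determined by the generic point $\xi$ of $X$ coincides with $(\beta_{F(X)})_*[\xi]$, the image of the class $[\xi]\in\CH_0(X_{F(X)})$ under the correspondence $\beta_{F(X)}\colon X_{F(X)}\corr Y_{F(X)}$ obtained by base change.

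First I would verify this identification by combining flat base change with the definition of the correspondence action. The pullback along $Y_{F(X)}\hookrightarrow X\times Y$ factors as base change to $F(X)$ followed by the pullback along the closed immersion $\{\xi\}\times Y_{F(X)}\hookrightarrow X_{F(X)}\times Y_{F(X)}$ cut out by the $F(X)$-point $\xi$; writing $p_1,p_2$ for the projections of $X_{F(X)}\times Y_{F(X)}$, the latter is precisely $(p_2)_*\bigl(\beta_{F(X)}\cdot p_1^*[\xi]\bigr)=(\beta_{F(X)})_*[\xi]$ by the definition of the correspondence action on Chow groups.

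Granted this identification, condition~(1) becomes the assertion that
$$
(\alpha_{F(X)})_*[\xi]=((\alpha\compose\rho)_{F(X)})_*[\xi]
$$
for every $Y$ and every $\alpha\in\CH(X\times Y)$. Since base change commutes with composition of correspondences, and composition is compatible with the action on Chow groups, the right-hand side rewrites as $(\alpha_{F(X)})_*(\rho_{F(X)})_*[\xi]$. The implication $(2)\Rightarrow(1)$ is then immediate: if $\rho_*[\xi]=[\xi]$ in $\CH_0(X_{F(X)})$, applying $(\alpha_{F(X)})_*$ to both sides yields the desired equality for every $\alpha$. For $(1)\Rightarrow(2)$ I would specialize to $Y=X$ and $\alpha=\Delta_X$: then $\Delta_X\compose\rho=\rho$ and $(\Delta_X)_*=\id$, so the equality produced by (1) collapses to $[\xi]=\rho_*[\xi]$, which is exactly~(2).

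The only genuinely non-formal ingredient is the base-change identification in the second step; once that is in place, everything else is bookkeeping with the yoga of correspondences. I do not anticipate a serious obstacle: the content of the lemma is that both directions drop out of the universal case $Y=X$, $\alpha=\Delta_X$, and the lemma is essentially the observation that the whole family of pullbacks is controlled by the single class $[\xi]$.
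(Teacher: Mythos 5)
Your proposal is correct and follows essentially the same route as the paper: the paper likewise identifies the pull-back of $\alpha$ to $\CH(Y_{F(X)})$ with $\alpha_*[\xi]$, deduces $(2)\Rightarrow(1)$ from $(\alpha\compose\rho)_*[\xi]=\alpha_*(\rho_*[\xi])$, and gets $(1)\Rightarrow(2)$ by taking $Y=X$ and $\alpha$ the class of the diagonal. The only difference is that you spell out the base-change identification that the paper takes for granted.
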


\begin{proof}
$(2)\Rightarrow(1)$.
The image of $\alpha$ is equal to $\alpha_*[\xi]$.
In particular, the image of $\alpha\compose\rho$ is equal to
$(\alpha\compose\rho)_*[\xi]=\alpha_*(\rho_*[\xi])=\alpha_*[\xi]$ if
$\rho_*[\xi]=[\xi]$.

$(1)\Rightarrow(2)$.
Apply $(1)$ to $Y=X$ and the class of the diagonal of $X$ in place of $\alpha$.
\end{proof}

\begin{cor}
\label{cor xi v xi}
If $\rho$ satisfies the conditions of Lemma {\rm \ref{xi v xi}}, then for any
$F$-variety $Y$ the pull-back homomorphism $\CH(X\times Y)\compose\rho\to\CH(Y_{F(X)})$
is surjective.
\qed
\end{cor}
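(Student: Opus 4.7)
The plan is to deduce the corollary by combining Lemma \ref{xi v xi}(1) with the standard fact that pull-back to a generic fiber is surjective on Chow groups. By part (1) of the lemma, for every $\alpha\in\CH(X\times Y)$ the pull-back of $\alpha$ to $\CH(Y_{F(X)})$ agrees with the pull-back of $\alpha\compose\rho$. Consequently the pull-back map $\CH(X\times Y)\to\CH(Y_{F(X)})$ and its restriction to the subgroup $\CH(X\times Y)\compose\rho$ have the same image. So all that remains is to show that the unrestricted pull-back $\CH(X\times Y)\to\CH(Y_{F(X)})$ is surjective.

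For that I would use the description
\[
\CH(Y_{F(X)})\;=\;\colim_{\emptyset\ne U\subset X}\CH(U\times Y),
\]
where $U$ runs over the non-empty open subvarieties of $X$ (we can pull back along $Y_{F(X)}\to U\times Y$ for each such $U$, and every cycle on $Y_{F(X)}$ extends to some $U\times Y$). For any such $U$, the localization exact sequence
\[
\CH\bigl((X\setminus U)\times Y\bigr)\longrightarrow\CH(X\times Y)\longrightarrow\CH(U\times Y)\longrightarrow 0
\]
shows that the flat pull-back $\CH(X\times Y)\to\CH(U\times Y)$ is surjective. Passing to the colimit over $U$ preserves surjectivity, giving the required surjectivity of $\CH(X\times Y)\to\CH(Y_{F(X)})$.

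There is essentially no obstacle here: the corollary is a direct formal consequence of Lemma \ref{xi v xi}(1) together with the elementary generic-point/localization argument. The one point to keep in mind is that the identification of the pull-back of $\alpha\compose\rho$ with an element of $\CH(X\times Y)\compose\rho$ followed by pull-back requires no smoothness or dimension hypothesis on $Y$ beyond what is already built into the composition of correspondences (which uses only smoothness and completeness of $X$, fixed from the outset). The argument therefore works for an arbitrary $F$-variety $Y$, as claimed.
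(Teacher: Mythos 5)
Your proposal is correct and matches the paper's intended argument: the corollary is stated with an immediate \qed precisely because, by Lemma \ref{xi v xi}(1), the restriction of the pull-back to $\CH(X\times Y)\compose\rho$ has the same image as the full pull-back $\CH(X\times Y)\to\CH(Y_{F(X)})$, and the latter is surjective by the standard colimit/localization description of the Chow group of the generic fiber. Your spelled-out version of that standard surjectivity is accurate, so there is nothing to add.
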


\begin{dfn}
A smooth complete $F$-variety $X$ is {\em $A$-trivial}, if for any field
extension $L/F$ with $X(L)\ne\emptyset$, the degree homomorphism
$\deg:\CH_0(X_L)\to\Lambda$ is an isomorphism.
\end{dfn}

\begin{rem}
\label{def-A-trivial}
The notion of $A$-triviality depends on $\Lambda$.
A variety $A$-trivial for $\Lambda=\Z$ is $A$-trivial for any $\Lambda$.
If $\Lambda\ne0$,
any $A$-trivial variety is geometrically irreducible.
\end{rem}

\begin{example}
\label{PHV}
Any projective homogeneous variety $X$ under an action of a semisimple
affine algebraic group is $A$-trivial. Indeed, if $X(L)\ne\emptyset$, the variety
$X_L$ is rational and therefore $\deg:\CH_0(X_L)\to\Lambda$ is an isomorphism by
Corollary \ref{bir inv}.
\end{example}

{\em Multiplicity} $\mult\rho$ of $\rho$ is the element of $\Lambda$
such that the push-forward of $\rho$ with respect to the first projection
$X\times X\to X$ is equal to $(\mult\rho)\cdot[X]$.

\begin{lemma}
\label{criter}
Assuming that
$X$ is $A$-trivial,
$\rho$ satisfies conditions of Lemma {\rm \ref{xi v xi}} if and only if
$\mult\rho=1$.
\end{lemma}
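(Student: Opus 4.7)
The plan is to reduce condition (2) of Lemma \ref{xi v xi} to an equality of degrees via $A$-triviality. Since the generic point of $X$ defines an $F(X)$-point of $X_{F(X)}$, the $A$-triviality of $X$ (applied to the extension $L = F(X)/F$) guarantees that the degree homomorphism
$$
\deg\colon \CH_0(X_{F(X)}) \to \Lambda
$$
is an isomorphism. In particular, an equality of $0$-cycles on $X_{F(X)}$ can be detected by taking degrees. Since $\deg[\xi] = 1$, condition (2) of Lemma \ref{xi v xi} is equivalent to $\deg(\rho_*[\xi]) = 1$.

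The main identity to establish is therefore
$$
\deg\bigl(\rho_*[\xi]\bigr) = \mult\rho.
$$
To verify it I would unwind the definition of the action $\rho_*$ on the generic point: writing $p_1, p_2\colon X\times X\to X$ for the projections, the generic point $\xi\in X_{F(X)}$ corresponds to the generic fiber of $p_1$, so $p_1^*[\xi]$ is (the class of) the generic fiber of $p_1$, and intersecting with $\rho$ then pushing forward along $p_2$ identifies $\rho_*[\xi]$ with the restriction of $\rho$ to the generic fiber of $p_1$. Taking degrees amounts to taking the degree of this restriction, which by the flat base change / projection formula for $p_1$ coincides with the coefficient of $[X]$ in $(p_1)_*\rho$, i.e.\ with $\mult\rho$.

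Combining these two observations finishes the proof: $\rho_*[\xi] = [\xi]$ holds in $\CH_0(X_{F(X)})$ if and only if $\deg(\rho_*[\xi]) = 1$ (by the isomorphism coming from $A$-triviality), which in turn is equivalent to $\mult\rho = 1$ (by the identity above). The main obstacle is really only the bookkeeping around the identity $\deg(\rho_*[\xi]) = \mult\rho$; once one accepts that $\rho_*[\xi]$ is the restriction of $\rho$ to the generic fiber of the first projection, everything else is immediate from $A$-triviality.
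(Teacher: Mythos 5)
Your proof is correct and follows the same route as the paper's: use $A$-triviality over $L=F(X)$ (which applies because $\xi$ gives a rational point of $X_{F(X)}$) to detect the equality $\rho_*[\xi]=[\xi]$ by degrees, then combine $\deg[\xi]=1$ with $\deg\rho_*[\xi]=\mult\rho$. The only difference is that you spell out the verification of $\deg\rho_*[\xi]=\mult\rho$, which the paper states without proof.
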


\begin{proof}
Since
$X$ is $A$-trivial,
the $0$-cycle classes $\rho_*[\xi],[\xi]\in\CH_0(X_{F(X)})$ coincide
if and only if their degrees coincide.
It remains to notice that $\deg[\xi]=1$ and $\deg\rho_*[\xi]=\mult\rho$.
\end{proof}

A trivial example of $\rho$ satisfying the conditions of Lemma \ref{xi v xi} is given by
the class of the diagonal of $X$.
Here is one more example:

\begin{example}
If $X$ is a projective homogeneous variety under an action of a semisimple
affine algebraic group and $\rho\in\CH^d(X\times X)$ is a projector
such that the summand $(X,\rho)$ of the Chow motive of $X$ is {\em upper} in the sense of
\cite[Definition 2.10]{upper},
then $\mult\rho=1$ and therefore $\rho$ satisfies conditions of Lemma \ref{xi v xi} by Lemma
\ref{criter} ($X$ is $A$-trivial by Example \ref{PHV}).
\end{example}

\begin{prop}
\label{general}
Assume that $\rho$ satisfies conditions of Lemma
{\rm \ref{xi v xi}}
(the assumption is satisfied, for instance, if $\mult\rho=1$ and
$X$ is $A$-trivial).
Also assume that $\rho$ is a projector.
Given an equidimensional $F$-variety $Y$ and an integer $m$
such that for any $i$ and any point $y\in Y$ of codimension $i$ the change of field homomorphism
$$
\rho^*\CH^{m-i}(X)\to\rho^*\CH^{m-i}(X_{F(y)})
$$
is surjective,
the change of field homomorphism
$$
\CH^m(Y)\to\CH^m(Y_{F(X)})
$$ is also surjective.
\end{prop}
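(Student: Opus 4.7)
The strategy is induction on $\dim Y$, exploiting Corollary \ref{cor xi v xi} to lift elements from $\CH^m(Y_{F(X)})$ to $\CH^m(X\times Y)$ and using the hypothesis to peel off codimensions one at a time. For the base case $\dim Y = 0$, each component of $Y$ is $\Spec F_k$ for a finite extension $F_k/F$; since $X$ is geometrically irreducible (Remark \ref{def-A-trivial}, using that the setup is applied with $X$ $A$-trivial and $\Lambda\ne 0$), the extension $F(X)/F$ is regular, each $F_k\otimes_F F(X)$ remains a field, and $\CH^0(Y)\to\CH^0(Y_{F(X)})$ is already an isomorphism. The assertion for $m>0$ is vacuous.

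For the inductive step, take $\beta\in \CH^m(Y_{F(X)})$. Corollary \ref{cor xi v xi} furnishes $\tilde\alpha = \alpha \compose \rho\in \CH^m(X\times Y)$ with $\tilde\alpha|_{Y_{F(X)}} = \beta$. A direct correspondence calculation identifies the restriction $\tilde\alpha|_{X_{F(Y)}}$ to the generic fibre of the projection $X\times Y\to Y$ with $\rho_{F(Y)}^*(\alpha|_{X_{F(Y)}})$, so it lies in $\rho^*\CH^m(X_{F(Y)})$. Applying the hypothesis at the generic point of $Y$ (codimension $i=0$) produces $\xi\in \rho^*\CH^m(X)$ whose image in $\CH^m(X_{F(Y)})$ equals $\tilde\alpha|_{X_{F(Y)}}$.

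Set $\tilde\alpha' := \tilde\alpha - \pr_X^*(\xi)$. By construction $\tilde\alpha'|_{X_{F(Y)}} = 0$, so the localization sequence over $Y$ shows that $\tilde\alpha'$ is supported on $X\times Z$ for some proper closed $Z\subsetneq Y$. Decomposing $Z$ into irreducible components $Z_k$ of codimensions $i_k\ge 1$ in $Y$, write $\tilde\alpha' = \sum_k (\id_X\times i_{Z_k})_*(\tilde\alpha''_k)$ with $\tilde\alpha''_k\in \CH^{m-i_k}(X\times Z_k)$. When $m\ge 1$, the class $\xi$ has positive codimension in $X$, so $\pr_X^*(\xi)$ is supported on a proper closed subset $W\times Y$ and vanishes after restriction to $Y_{F(X)}$; when $m = 0$ the analogous conclusion is immediate because $\CH^0$ classes cannot be supported in positive codimension. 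Hence $\beta = \sum_k (i_{Z_k})_*\bigl(\tilde\alpha''_k|_{(Z_k)_{F(X)}}\bigr)$ in $\CH^m(Y_{F(X)})$.

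To finish, apply the inductive hypothesis to each pair $(Z_k, m-i_k)$. The required hypothesis transfers cleanly: a point $z\in Z_k$ of codimension $j$ in $Z_k$ has codimension $i_k + j$ in $Y$, and the surjectivity of $\rho^*\CH^{m-i_k-j}(X)\to \rho^*\CH^{m-i_k-j}(X_{F(z)})$ is precisely the assumed hypothesis for $(Y,m)$ at this point. The induction then yields $\gamma_k\in \CH^{m-i_k}(Z_k)$ with $\gamma_k|_{(Z_k)_{F(X)}} = \tilde\alpha''_k|_{(Z_k)_{F(X)}}$, whence $\gamma := \sum_k (i_{Z_k})_*\gamma_k\in \CH^m(Y)$ restricts to $\beta$. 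The main technical subtlety is the interplay of correspondence composition with restriction to generic fibres, handled via the projection formula and the smoothness of $X$; throughout, only flat pullbacks and proper push-forwards along closed immersions are used, so the non-smoothness of $Y$ causes no difficulty.
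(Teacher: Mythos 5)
Your proof is correct and follows essentially the same route as the paper's: induction on $\dim Y$, using Corollary \ref{cor xi v xi} to lift a class of $\mathrm{CH}^m(Y_{F(X)})$ to $\mathrm{CH}^m(X\times Y)\circ\rho$, the hypothesis at the generic point to remove the part visible on the generic fibre $X_{F(Y)}$, and the localization sequence to push the remainder onto proper closed subvarieties where the induction applies. The only difference is one of packaging: the paper runs the same induction as a surjectivity statement for the external-product map $\bigoplus_i\bigl(\rho^*\mathrm{CH}^i(X)\bigr)\otimes_\Lambda\mathrm{CH}^{m-i}(Y)\to\mathrm{CH}^m(X\times Y)\circ\rho$ and then descends to $\mathrm{CH}^m(Y_{F(X)})$ via a commutative square involving the augmentation $\mathrm{CH}(X)\to\Lambda$, rather than chasing a single class.
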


\begin{proof}
Since $\rho^*(x)\times y=(x\times y)\compose\rho$ for any $x\in\CH(X)$,
$F$-variety $Y$ and $y\in\CH(Y)$ (where the composition of correspondences is taken in the sense of
\cite{MR2029161}, see also \cite[\S62]{EKM}),
the external product homomorphism
$\CH(X)\otimes\CH(Y)\to\CH(X\times Y)$
maps $\big(\rho^*\CH(X)\big)\otimes\CH(Y)$ to $\CH(X\times
Y)\compose\rho$.

Let us check that in our situation
the homomorphism
\begin{equation*}
%\label{ext hom}
\Oplus_{i}\big(\rho^*\CH^i(X)\big)\otimes_\Lambda\CH^{m-i}(Y)\to\CH^m(X\times Y)\compose\rho
\end{equation*}
is surjective.

Checking this, we may assume that $Y$ is integral and
proceed by induction on $\dim Y$ using
the exact sequence
$$
\Oplus_{Y'}\CH^{m-1}(X\times Y')\compose\rho\to\CH^m(X\times Y)\compose\rho\to\rho^*\CH^m(X_{F(Y)}),
$$
where the direct sum is taken over all integral subvarieties $Y'\subset Y$
of codimension $1$.
The sequence is exact because the sequence
$$
\Oplus_{Y'}\CH^{m-1}(X\times Y')\to\CH^m(X\times Y)\to\CH^m(X_{F(Y)})
$$
is exact and $\rho$ is a projector.

Now we consider the following commutative diagram
$$
\begin{CD}
\big(\rho^*\CH(X)\big)\otimes_\Lambda\CH(Y)@>>>\CH(X\times Y)\compose\rho\\
@VVV @VVV\\
\CH(Y)@>>>\CH(Y_{F(X)})
\end{CD}
$$
where the left homomorphism is induced by the augmentation map
$\CH(X)\to\Lambda$.
The right homomorphism is surjective by Corollary \ref{cor xi v xi}.
As we checked right above, the top homomorphism is surjective in codimension $m$.
Therefore the bottom homomorphism is also surjective in codimension $m$.
\end{proof}

The following statement is a particular case of \cite[Theorem 2.11 ($3\Rightarrow1$)]{MR2427051}:

\begin{lemma}
\label{surj}
Assume that
$X$ is $A$-trivial
and $1\in\deg\CH_0(X)$.
Then for any $F$-variety $Y$, the change of field homomorphism $\CH(Y)\to\CH(Y_{F(X)})$ is
an isomorphism.
\end{lemma}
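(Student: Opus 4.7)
Pick $\alpha\in\CH_0(X)$ with $\deg\alpha=1$, set $d=\dim X$, and consider the class $\rho=[X]\times\alpha=p_2^*\alpha\in\CH^d(X\times X)$. The plan is to derive surjectivity of the change of field map from Proposition \ref{general} applied to $\rho$, and injectivity by constructing an explicit retraction of $p_2^*\colon\CH(Y)\to\CH(X\times Y)$ via intersection with $p_1^*\alpha$.

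For surjectivity, I would first verify that $\rho\compose\rho=\rho$ (direct computation) and that $\mult\rho=1$, the latter from the identity $(p_1)_*\rho=\pi_X^*\pi_{X*}\alpha=[X]$ via flat base change. Lemma \ref{criter} then forces $\rho$ to satisfy the conditions of Lemma \ref{xi v xi}, using the $A$-triviality of $X$. The crucial observation is that $\rho^*\CH(X)$ is concentrated in codimension $0$, where it equals $\Lambda\cdot[X]$; indeed, for $x\in\CH^k(X)$ with $k>0$ one has $\alpha\cdot x\in\CH^{k+d}(X)=0$ by dimension. Hence the surjectivity hypothesis of Proposition \ref{general} is trivially satisfied for every $m$, and the proposition yields surjectivity of $\CH^m(Y)\to\CH^m(Y_{F(X)})$ (applied to each irreducible component of $Y$ when $Y$ is not equidimensional).

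For injectivity, the smoothness of $X$ equips $\CH(X\times Y)$ with a $\CH(X)$-module structure via the refined Gysin map for the regular embedding $\Delta_X\colon X\hookrightarrow X\times X$, so the operator $w\mapsto p_1^*\alpha\cdot w$ is defined even though $Y$ may fail to be smooth. Define $\sigma\colon\CH(X\times Y)\to\CH(Y)$ by $\sigma(w)=(p_2)_*(p_1^*\alpha\cdot w)$. Two facts suffice for injectivity: (i) $\sigma\compose p_2^*=\id_{\CH(Y)}$, and (ii) $\sigma$ vanishes on the kernel of the restriction $\CH(X\times Y)\to\CH(Y_{F(X)})$; together they guarantee that $\sigma$ descends to a retraction of the change of field map. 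Claim (i) follows from $(p_2)_*p_1^*\alpha=\pi_Y^*\pi_{X*}\alpha=[Y]$ (flat base change, valid because $p_1$ is flat and $p_2$ is proper as $X$ is complete) combined with the projection formula.

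For (ii), writing $\CH(Y_{F(X)})=\varinjlim_U\CH(U\times Y)$ over nonempty opens $U\subseteq X$, the localization sequence represents any kernel class in the form $i_*w$ with $i=j\times\id_Y$, $j\colon D\hookrightarrow X$ a proper closed subvariety, and $w\in\CH(D\times Y)$. Geometric irreducibility of $X$ forces $\dim D<d$, so the refined Gysin pullback $j^*\alpha$ vanishes for dimensional reasons; functoriality then gives $i^*p_1^*\alpha=p_D^*j^*\alpha=0$, and the projection formula for $i$ delivers $p_1^*\alpha\cdot i_*w=i_*(0\cdot w)=0$. The principal technical obstacle throughout is handling refined intersection theory on the possibly non-smooth $X\times Y$, which is dispatched uniformly by Fulton's refined Gysin formalism using smoothness of $X$ alone.
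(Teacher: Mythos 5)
Your surjectivity argument is correct, though it routes through Proposition \ref{general} where the paper argues directly: with $x\in\CH_0(X)$ of degree $1$, any $\alpha\in\CH(X\times Y)$ and $\alpha\compose([X]\times x)=[X]\times\alpha_*(x)$ have the same image in $\CH(Y_{F(X)})$ by Lemmas \ref{criter} and \ref{xi v xi}, and the latter image is visibly $\alpha_*(x)_{F(X)}$ with $\alpha_*(x)\in\CH(Y)$. Your verification that $\rho^*\CH^j(X_L)$ vanishes for $j>0$ and equals $\Lambda\cdot[X_L]$ for $j=0$ is sound, so this half goes through.

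The injectivity argument has a genuine gap at step (ii). The assertion that ``the refined Gysin pullback $j^*\alpha$ vanishes for dimensional reasons'' is not justified: for the projection formula $p_1^*\alpha\cdot i_*w=i_*\bigl(i^*(p_1^*\alpha)\cdot w\bigr)$ to make sense on the generally singular scheme $D\times Y$, the class $i^*(p_1^*\alpha)$ must be an operational class in $A^d(D\times Y)$, acting on $\CH_k(D\times Y)$ for $k$ as large as $\dim D+\dim Y$, which exceeds $d$ once $\dim Y>d-\dim D$. The inequality $\dim D<d$ only shows that $j^*\alpha$ capped with $[D]$ lands in $\CH_{\dim D-d}(D)=0$; it does not show the operational class is zero, and indeed the refined product $p_1^*\alpha\cdot i_*w$ is naturally supported on $(|\alpha|\cap D)\times Y$ for a chosen representing cycle of $\alpha$, whose Chow groups in the relevant degree $k-d$ need not vanish when the support of $\alpha$ meets $D$. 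To close the gap you need a genuine input: either move $\alpha$ within its rational equivalence class to a $0$-cycle supported off $D$ (a moving lemma for $0$-cycles on the smooth complete irreducible $X$, e.g.\ by pushing the offending points along curves not contained in $D$), after which the refined product is supported on the empty set; or invoke the well-definedness of the specialization homomorphism $\CH(Y_{F(X)})\to\CH(Y_{F(x)})$ at a closed point $x\in X$ (Fulton, \S 20.3, whose essential content is that intersecting with a \emph{principal} Cartier divisor annihilates classes supported on that divisor) and retract via $\sum n_i\,N_{F(x_i)/F}\compose sp_{x_i}$ for a degree-one cycle $\sum n_i[x_i]$. The latter is exactly the paper's one-line proof of injectivity; your retraction $\sigma$ is essentially a repackaging of it, but its key step cannot be replaced by a dimension count.
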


\begin{proof}
To prove surjectivity, we note that
any $y\in\CH(Y_{F(X)})$ is the image of some $\alpha\in\CH(X\times Y)$.
If $x\in\CH_0(X)$ is an element of degree $1$, then
the correspondence $[X]\times x\in\CH^d(X\times X)$ satisfies by Lemma
\ref{criter} the conditions of Lemma \ref{xi v xi}.
Therefore
$\alpha\compose([X]\times x)\in\CH(X\times Y)$ is also mapped to $y\in\CH(Y_{F(X)})$.
On the other hand, $\alpha\compose([X]\times x)=[X]\times\alpha_*(x)$
is mapped to $\alpha_*(x)_{F(X)}$ and it follows that
$\alpha_*(x)$ is an element of $\CH(Y)$ mapped to $y$.

Injectivity follows by specialization (see \cite[\S20.3]{Fulton} or \cite{MR1418952}).
\end{proof}

\begin{cor}
\label{l}
Assume that $X$ is $A$-trivial.
Then for any $l\in\deg\CH_0(X)\subset\Lambda$ and any $F$-variety $Y$, the image of
$\CH(Y)\to\CH(Y_{F(X)})$ contains $l\CH(Y_{F(X)})$.
\end{cor}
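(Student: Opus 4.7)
The plan is to adapt the proof of Lemma \ref{surj} by replacing a $0$-cycle of degree $1$ with one of degree $l$, which forces an extra factor of $l$ into the resulting identity and yields exactly the desired conclusion.

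First I would pick an arbitrary $y \in \CH(Y_{F(X)})$ and lift it to a class $\alpha \in \CH(X\times Y)$ via the surjective pullback $\CH(X\times Y) \to \CH(Y_{F(X)})$ (from the localization exact sequence), so that $y = \alpha_*[\xi]$ in the sense used in the proof of Lemma \ref{xi v xi}. Next I fix $x \in \CH_0(X)$ with $\deg x = l$ and put $\rho := [X] \times x \in \CH^d(X \times X)$; a direct check gives $\mult \rho = l$. The crucial use of $A$-triviality is the following: since $X_{F(X)}$ has an $F(X)$-point (the generic one), the variety $X_{F(X)}$ is again $A$-trivial, so $\deg : \CH_0(X_{F(X)}) \to \Lambda$ is injective; the two classes $\rho_*[\xi]$ and $l[\xi]$ both have degree $l$, hence coincide.

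To finish I would compute the image in $\CH(Y_{F(X)})$ of the composed correspondence $\alpha \circ \rho$ in two different ways. By functoriality this image equals $(\alpha \circ \rho)_*[\xi] = \alpha_*(\rho_*[\xi]) = \alpha_*(l[\xi]) = l y$. On the other hand, using the identity $\alpha \circ ([X] \times x) = [X] \times \alpha_*(x)$ already recorded in the proof of Lemma \ref{surj} (with $\alpha_*(x) \in \CH(Y)$), the image equals $(\alpha_*(x))_{F(X)}$. Equating yields $l y = (\alpha_*(x))_{F(X)}$, so $l y$ belongs to the image of $\CH(Y) \to \CH(Y_{F(X)})$. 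Since the argument closely parallels Lemma \ref{surj}, I do not expect a substantive obstacle; the one delicate point is applying $A$-triviality over the extension $F(X)$ rather than over $F$, which is legitimate precisely because the hypothesis is formulated for all extensions over which $X$ acquires a rational point.
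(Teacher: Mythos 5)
Your argument is correct, but it takes a genuinely different route from the paper's. The paper's proof reduces to the case $l=\deg x$ for a closed point $x\in X$ with residue field $L$, applies Lemma \ref{surj} over $L$ (where $X_L$ acquires a rational point, so $\CH(Y_L)\to\CH(Y_{L(X)})$ is surjective), and then concludes by the restriction--corestriction (transfer) argument for the finite extension $L/F$. You instead rerun the proof of Lemma \ref{surj} itself with the correspondence $\rho=[X]\times x$ of multiplicity $l$ in place of multiplicity $1$: $A$-triviality applied to the extension $F(X)/F$ (legitimate, as you note, since $X_{F(X)}$ has the generic point as a rational point) identifies $\rho_*[\xi]$ with $l[\xi]$, so the image of $\alpha\circ\rho$ in $\CH(Y_{F(X)})$ equals $l\,y$ on one hand and the $F$-rational class $(\alpha_*(x))_{F(X)}$ on the other. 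Both proofs are short; yours has the minor advantages of treating an arbitrary $l\in\deg\CH_0(X)$ directly (realized as $\deg x$ for a $0$-cycle class $x$, with no reduction to closed points) and of never leaving the base field, at the cost of redoing the computation from Lemma \ref{surj} rather than citing it as a black box. All the identities you invoke (the multiplicity of $[X]\times x$ being $\deg x$, the equality $\deg\rho_*[\xi]=l$, the identity $\alpha\circ([X]\times x)=[X]\times\alpha_*(x)$, and the description of the image of a correspondence as its value on $[\xi]$) are exactly those appearing in the proofs of Lemmas \ref{xi v xi}, \ref{criter} and \ref{surj}, so there is no gap.
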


\begin{proof}
It suffices to consider the case $l=\deg x$ for a closed point $x\in X$.
Let $L$ be the residue field of $x$.
The change of field homomorphism
$\CH(Y_L)\to\CH(Y_{L(X)})$ is surjective by Lemma \ref{surj},
and the transfer argument does the job.
\end{proof}

\section
{Abstract Rost motives}
\label{Abstract Rost motives}

In this section, the coefficient ring $\Lambda$
is $\Z_{(p)}$ (the ring of integers localized in a fixed
prime $p$) or $\F_p$ (the finite field of $p$ elements).

For any integer $n\geq1$, an {\em abstract Rost motive of degree $n+1$} with coefficients in $\Lambda$ is
a Chow motive $\WR$ with coefficients in $\Lambda$
living on a smooth complete geometrically irreducible variety $X$ such that
for any field extension $L/F$ with
$1\in\deg\CH_0(X_L)$ one has
$\WR_L\simeq \Lambda\oplus\Lambda(b)\oplus\dots\oplus\Lambda((p-1)b)$,
where $b:=(p^n-1)/(p-1)$.

In particular, $\dim X\geq p^n-1=(p-1)b$.
Pulling-back the projector of $\WR$ with respect to the diagonal of $X$,
produces a $0$-cycle class of degree $p$
(cf.  \cite[Lemma 2.21]{upper})
showing that
$\deg\CH_0(X)\supset p\Lambda$.
It follows that the the ideal $\deg\CH_0(X)\subset\Lambda$ of the coefficient ring $\Lambda$
is equal either to $p\Lambda$ or to $\Lambda$.

The condition $1\in\deg\CH_0(X_L)$ appearing in the definition means the
same for $\Lambda=\Z_{(p)}$ as for $\Lambda=\F_p$.
In $\Lambda$-free terms, it means that the variety $X_L$ has a closed
point of a prime to $p$ degree.

By the very definition, the multiplicity of the projector of $\WR$ (which we call an {\em abstract Rost projector})
is equal to $1$.

Note that for any field extension $L/F$
such that
$1\in\deg\CH_0(X_L)$ we have
$$
\End \WR_L=\End\Lambda\times\End\Lambda(b)\times\dots\times\End\Lambda((p-1)b)=\Lambda^p.
$$
In particular, $\End\WR_L\to\End\WR_{L'}$ is an isomorphism for any field
extension $L'/L$.

We fix an integer $n\geq1$ and consider only abstract Rost motives of
degree $n+1$ below.

\begin{lemma}
\label{R indec}
An abstract Rost motive is indecomposable if (and only if) $1\not\in\deg\CH_0(X)$.
In particular, an abstract Rost motive with coefficients in $\Z_{(p)}$ is
indecomposable if and only if the corresponding abstract Rost motive with
coefficients in $\F_p$ is indecomposable.
\end{lemma}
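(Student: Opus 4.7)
The ``only if'' direction is immediate: if $1 \in \deg\CH_0(X)$, then the defining splitting property of $\WR$ applied to $L = F$ yields $\WR \simeq \Lambda \oplus \Lambda(b) \oplus \dots \oplus \Lambda((p-1)b)$, a nontrivial direct sum (since $p \geq 2$), so $\WR$ is decomposable.

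For the ``if'' half I argue the contrapositive: assuming a decomposition $\WR \simeq M \oplus N$ with both summands nonzero, I produce a $0$-cycle on $X$ whose degree is a unit in $\Lambda$. Let $e, \tilde e := \pi_\WR - e \in \End\WR$ be the associated orthogonal idempotents and fix a splitting field $L$ (one with $1 \in \deg\CH_0(X_L)$); then $\End\WR_L \simeq \Lambda^p$ with factors indexed by the Tate summands $\Lambda(ib)$. The key preliminary observation is that the multiplicity $\mult$ and its counterpart $\mult_2$ (push-forward along the second projection) are both ring homomorphisms $\End\WR \to \Lambda$: both are preserved by base change, and over $L$ they coincide with the projections of $\Lambda^p$ onto the $\Lambda(0)$- and $\Lambda((p-1)b)$-factors respectively (directly: $\mult \pi_i = \delta_{i,0}$ and $\mult_2 \pi_i = \delta_{i,p-1}$ for the Tate projectors $\pi_i$). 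Hence $\mult e, \mult_2 e \in \{0,1\}$, and after swapping $M, N$ if necessary I may assume $\mult e = 1$, so that $e_L = (1, e_1, \dots, e_{p-1}) \in \Lambda^p$ with $e_i \in \{0,1\}$.

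The main computation uses the Euler characteristic $\chi(\alpha) := \deg \Delta^*_X(\alpha) \in \Lambda$; it is additive on direct summands of motives, preserved by base change, and satisfies $\chi(\Lambda(ib)) = 1$ for each $i$. Thus $\chi(e) = 1 + \sum_{i \geq 1} e_i$ lies in $\{1, 2, \dots, p\}$. When $\chi(e) \leq p - 1$, it is a unit in $\Lambda = \Z_{(p)}$ or $\F_p$, and $\chi(e)^{-1}\Delta^*_X(e) \in \CH_0(X)$ is a $0$-cycle of degree $1$; this gives $1 \in \deg\CH_0(X)$, as required.

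The main obstacle is the residual case $\chi(e) = p$, equivalently $e_L = \id_{\WR_L}$ while $\tilde e \neq 0$. In this situation $\tilde e$ is a nonzero idempotent of $\End\WR$ that vanishes after base change to $L$; to exclude it I would invoke a Rost-nilpotence-type input for abstract Rost motives, namely that the kernel of $\End\WR \to \End\WR_L$ is a nil ideal and hence cannot contain a nonzero idempotent, contradicting $\tilde e \neq 0$. Finally, the ``in particular'' clause is automatic: as noted in the excerpt the condition $1 \in \deg\CH_0(X_L)$ does not depend on whether $\Lambda = \Z_{(p)}$ or $\F_p$, so the indecomposability criterion $1 \notin \deg\CH_0(X)$ is the same for both coefficient rings.
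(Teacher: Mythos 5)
Your argument is correct and is in essence the paper's own proof: both hinge on the nilpotence principle for the kernel of $\End\WR\to\End\WR_{F(X)}$ together with the fact that pulling back an idempotent along the diagonal of $X$ yields a $0$-cycle whose degree equals the number of Tate summands that idempotent cuts out over a splitting field, hence is a unit unless that number is $0$ or $p$. The paper merely packages this differently -- it applies nilpotence first so that Krull--Schmidt gives $0<m<p$ Tate summands for each factor, instead of deferring nilpotence to the single degenerate case $e_L=\id$ -- and the ``Rost-nilpotence-type input'' you leave as a black box is exactly the cited \cite[Proposition 3.1]{MR2393083}, stated over $F(X)$, to which your arbitrary splitting field $L$ reduces at once because $\End\WR_{F(X)}\to\End\WR_{L(X)}$ is an isomorphism (your unused aside that $\mult_2$ is a unital ring homomorphism is the one inaccuracy, as it can fail when $\dim X>(p-1)b$, but nothing in the argument depends on it).
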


\begin{proof}
Assuming that
$\WR=R_1\oplus R_2$ with $R_1,R_2\ne0$, we get $(R_1)_{F(X)}, (R_2)_{F(X)}\ne0$
by the nilpotence principle \cite[Proposition 3.1]{MR2393083}.
It follows by the Krull-Schmidt principle of \cite{MR2264459} that
$(R_1)_{F(X)}$ is isomorphic to a direct sum of shifts of $m$ copies of
$\Lambda$ where $0<m<p$.
Pulling back the projector of $R_1$ via the diagonal of $X$, we get a $0$-cycle of degree $m\in\Lambda$
(cf.  \cite[Lemma 2.21]{upper}).
This contradicts $1\not\in\deg\CH_0(X)$.
\end{proof}

\begin{lemma}
\label{pEnd}
For any abstract Rost motive $\WR$ one has
$$
p\End \WR_{F(X)}\subset \Im(\End \WR\to\End \WR_{F(X)}).
$$
\end{lemma}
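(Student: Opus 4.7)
The plan is to split into cases according to whether $1$ lies in the ideal $\deg\CH_0(X)\subset\Lambda$. Pulling the projector of $\WR$ back along the diagonal of $X$ produces a $0$-cycle of degree $p$, so this ideal is either $\Lambda$ or $p\Lambda$. For $\Lambda=\F_p$ the claim is vacuous (as $p=0$), so one reduces to $\Lambda=\Z_{(p)}$. If $1\in\deg\CH_0(X)$, the observation preceding Lemma~\ref{R indec} (applied with $L=F$ and $L'=F(X)$) already makes $\End\WR\to\End\WR_{F(X)}$ an isomorphism, and there is nothing to prove.

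Assume henceforth $\deg\CH_0(X)=p\Z_{(p)}$. Then there must exist a closed point $x\in X$ whose degree has $p$-adic valuation exactly $1$: otherwise every closed point would have degree in $p^2\Z_{(p)}$, forcing the ideal $\deg\CH_0(X)$ to sit inside $p^2\Z_{(p)}$. Set $L:=F(x)$, so $[L:F]=pv$ with $v\in\Z_{(p)}^\times$. The point $x$ provides an $L$-rational point of $X_L$, so $1\in\deg\CH_0(X_L)$, and the observation preceding Lemma~\ref{R indec} now yields an isomorphism $\End\WR_L\xrightarrow{\sim}\End\WR_{L(X)}$, where $L(X)$ denotes the function field of $X_L$ (a field, since $F(X)/F$ is a regular extension because $X$ is geometrically irreducible).

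For any $\alpha\in\End\WR_{F(X)}$, the plan is to lift $\alpha_{L(X)}$ through the above isomorphism to some $\beta\in\End\WR_L$ and then apply the transfer $\cores_{L/F}\colon\CH((X\times X)_L)\to\CH(X\times X)$. The projection formula $\cores(\res(\eta)\circ\zeta\circ\res(\eta'))=\eta\circ\cores(\zeta)\circ\eta'$ ensures that $\cores_{L/F}$ carries $\End\WR_L=\rho_L\circ\CH((X\times X)_L)\circ\rho_L$ into $\End\WR$. Base change of proper pushforward yields the commutative square
\[
\begin{CD}
\End\WR_L @>\cores_{L/F}>> \End\WR \\
@VVV @VVV \\
\End\WR_{L(X)} @>\cores_{L(X)/F(X)}>> \End\WR_{F(X)}.
\end{CD}
\]
Since $\cores\circ\res$ is multiplication by $[L(X):F(X)]=pv$, chasing $\beta$ around this diagram shows the image of $\cores_{L/F}(\beta)\in\End\WR$ inside $\End\WR_{F(X)}$ equals $pv\cdot\alpha$. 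As $v$ is a unit in $\Z_{(p)}$ and the image is a $\Lambda$-submodule, multiplying by $v^{-1}$ places $p\alpha$ in the image, as required.

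The main delicate point to verify will be that the transfer along $L/F$ descends cleanly to endomorphism rings of the Rost motive and is compatible with further base change to $F(X)$; both properties should follow formally from the projection formula and flat/proper base change for Chow groups, so no essentially new technical ingredient is needed.
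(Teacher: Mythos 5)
Your proposal is correct and follows essentially the same route as the paper's proof: reduce to $\Lambda=\Z_{(p)}$ and to the case $\deg\CH_0(X)=p\Z_{(p)}$, pass to the residue field $L$ of a closed point of degree $p$ times a unit, use that $\End\WR_L\to\End\WR_{L(X)}$ is an isomorphism, and descend via the transfer, which composed with restriction is multiplication by $[L:F]$. The paper phrases the first reduction as "assume $\WR$ indecomposable" (equivalent to $1\notin\deg\CH_0(X)$ by Lemma \ref{R indec}) and leaves the compatibility of corestriction with base change implicit, but the argument is the same.
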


\begin{proof}
The statement being vacuous for $\Lambda=\F_p$, one may assume that $\Lambda=\Z_{(p)}$ in the proof.
We also may assume that $\WR$ is indecomposable.

Let $L$ be the residue field of a closed point on $X$ of degree not
divisible by $p^2$ (but, of course, divisible by $p$).
Since $X(L)\ne\emptyset$,
$\End \WR_L\to \End \WR_{L(X)}$ is an isomorphism.
For any $\alpha\in\End \WR_{F(X)}$ the endomorphism
$p\alpha$ is
in the image of the coinciding with multiplication by $[L:F]$ composition
$\End \WR_{F(X)}\to\End \WR_{L(X)}\to\End \WR_{F(X)}$ and
therefore in the image of the composition
$\End \WR_L\to\End \WR_{L(X)}\to\End \WR_{F(X)}$
which coincides with the composition
$\End \WR_L\to\End \WR\to\End \WR_{F(X)}$.
\end{proof}

\begin{lemma}
\label{mult1end}
Any multiplicity $1$ endomorphism of an indecomposable abstract Rost motive is an automorphism.
\end{lemma}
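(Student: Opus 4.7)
The plan is to analyze $\alpha$ through its restriction to $F(X)$, where $\End\WR_{F(X)}=\Lambda^p$ decomposes into the endomorphism rings of the Tate summands $\Lambda(ib)$, $i=0,\dots,p-1$. I shall show that $\alpha_{F(X)}$ is a unit in $\Lambda^p$ and that its inverse again lies in the image of $\End\WR\to\End\WR_{F(X)}$; combined with the nilpotence principle, this will force $\alpha$ itself to be a unit in $\End\WR$.

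Writing $\alpha_{F(X)}=(a_0,a_1,\dots,a_{p-1})$, the first step is the observation $a_0=\mult\alpha=1$. Indeed, $\mult\colon\Lambda^p\to\Lambda$ is the ring homomorphism sending a tuple to the multiplicity of the corresponding correspondence on $X$: the projector onto the bottom summand $\Lambda=\Lambda(0)$ is represented by a cycle of the form $[X]\times x_0$ with $\deg x_0=1$ and so has multiplicity $1$, while every other Tate projector has multiplicity $0$. Thus $\mult$ agrees with the first coordinate projection, and $\mult\alpha=1$ yields $a_0=1$.

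The key structural input is the identification of the image $R$ of $\End\WR\to\End\WR_{F(X)}=\Lambda^p$. By Lemma~\ref{pEnd}, $R\supseteq p\Lambda^p$, so $R/p\Lambda^p$ is naturally a subring of $\Lambda^p/p\Lambda^p=\F_p^p$. Since $\WR$ is indecomposable, $\End\WR$ has no non-trivial idempotents; by the nilpotence principle the kernel of $\End\WR\to R$ is nil, hence idempotents lift from $R$ to $\End\WR$. Moreover, the only idempotents of $\Lambda^p$ are $\{0,1\}$-tuples, and any such tuple whose reduction lies in $R/p\Lambda^p$ automatically belongs to $R$ (because it differs from some element of $R$ by an element of $p\Lambda^p\subseteq R$); consequently idempotents also lift from $R/p\Lambda^p$ to $R$. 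Thus $R/p\Lambda^p$ is a connected subring of $\F_p^p$. As every element of $\F_p^p$ satisfies $x^p=x$, any reduced connected subring of $\F_p^p$ is a field consisting of solutions of $x^p=x$, and must therefore equal the diagonal copy of $\F_p$. Hence
$$
R=\bigl\{(a_0,\dots,a_{p-1})\in\Lambda^p : a_0\equiv a_1\equiv\dots\equiv a_{p-1}\pmod{p}\bigr\}.
$$

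With this identification, the congruence $a_i\equiv a_0=1\pmod p$ shows that every $a_i$ is a unit in $\Lambda$, and that the inverse $\alpha_{F(X)}^{-1}=(a_i^{-1})$ again satisfies the defining condition of $R$. Lifting it to some $\beta\in\End\WR$, the elements $\alpha\beta-1$ and $\beta\alpha-1$ restrict to $0$ over $F(X)$ and are therefore nilpotent by the nilpotence principle; so $\alpha\beta$ and $\beta\alpha$ are units in $\End\WR$ and $\alpha$ has a two-sided inverse. The main obstacle is the structural description of $R$: pinning it down requires delicately combining Lemma~\ref{pEnd} with the two-stage idempotent-lifting argument (along $\End\WR\twoheadrightarrow R$ and along $R\twoheadrightarrow R/p\Lambda^p$), and only once this is accomplished do the remaining steps become routine.
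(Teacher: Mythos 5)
Your proof is correct and follows essentially the same route as the paper's: the multiplicity identifies the first component of $\alpha_{F(X)}$ as $1$, indecomposability combined with the nilpotence principle (via an idempotent produced from the $(p-1)$th power) forces all components to be congruent modulo $p$, Lemma \ref{pEnd} makes the inverse $F$-rational, and the nilpotence principle finishes. The only difference is organizational: you first pin down the entire image of $\End\WR$ in $\Lambda^p$ as the subring $\{(a_0,\dots,a_{p-1}): a_i\equiv a_j \pmod{p}\}$ --- in effect proving one inclusion of the description of $\Im(\End\WR\to\End\WR_{F(X)})$ stated in the Remark of Section \ref{Generic splitting varieties} --- whereas the paper runs the same idempotent argument directly on $(\alpha-\lambda\cdot\id)^{p-1}$ for each offending component $\lambda$.
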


\begin{proof}
We take some $\alpha\in\End \WR$ of multiplicity $1$.
Since the ring
$
\End \WR_{F(X)}
$
is the product of $p$ copies of $\Lambda$,
the endomorphism $\alpha_{F(X)}\in\End \WR_{F(X)}$ is given by a $p$-tuple
of elements in $\Lambda$.
This $p$-tuple starts with $1$ (because the starting component of the $p$-tuple is the multiplicity of $\alpha$).
Actually, every component of the $p$-tuple is congruent to $1$ modulo $p$.
Indeed, if a component of $\alpha_{F(X)}$ is $\lambda\not\equiv1$, the
{\em $F$-rational} (i.e., coming from $F$) endomorphism $(\alpha-\lambda\cdot\id)_{F(X)}$
considered in  $(\End \WR_{F(X)})_{\Lambda}\otimes\F_p$ has a nontrivial and a trivial component.
Rasing to ($p-1$)th power, provides us with a nontrivial $F$-rational
idempotent.
By the nilpotence principle mentioned in the proof of Lemma \ref{R indec}, this produces a nontrivial idempotent in the ring
$(\End \WR)_\Lambda\otimes\F_p$, contradicting Lemma \ref{R indec}.

So, every component of $\alpha_{F(X)}$ is congruent to $1$ modulo $p$
(and this is the end of the proof in the case of $\Lambda=\F_p$).
In particular, $\alpha_{F(X)}$ is invertible.
By Lemma \ref{pEnd}, the inverse of $\alpha_{F(X)}$ is rational
(because each component of the inverse is also congruent to $1$ modulo $p$).
Therefore we may assume that $\alpha_{F(X)}=1$.
In this case, $\alpha-1$ is nilpotent by nilpotence principle applied one more
time, and it follows that $\alpha$ itself is invertible.
\end{proof}

It turns out that $\WR$ is determined by the class of $X$ with
respect to the following equivalence relation:
$X\sim X'$ if there exist multiplicity $1$ correspondences $X\corr X'$ and $X'\corr X$.
(In slightly different terms, $X\sim X'$ means that for any field extension $L/F$ one has $1\in\deg\CH_0(X_L)$
if and only if $1\in\deg\CH_0(X'_L)$.
Note that the equivalence relations for $\Lambda=\Z_{(p)}$ and $\Lambda=\F_p$ coincide.)
More precisely, we have

\begin{prop}
\label{R and R}
Abstract Rost motives $\WR$ and $\WR'$ living on
varieties $X$ and $X'$ are isomorphic if and only if the varieties are equivalent.
If an indecomposable abstract Rost motive lives on one of two equivalent varieties, then
it also lives on the other one.
\end{prop}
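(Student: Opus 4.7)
The plan is to reduce both assertions to the characterization that an abstract Rost motive $\WR$ on $X$ becomes split over a field extension $L/F$, i.e.\ $\WR_L \simeq \Lambda \oplus \Lambda(b)\oplus\dots\oplus\Lambda((p-1)b)$, if and only if $1\in\deg\CH_0(X_L)$. The ``if'' direction is the definition; for ``only if'' I would observe that such a splitting presents $\Lambda$ as a summand of $\WR_L$ and hence of $M(X_L)$, giving morphisms $i\colon\Lambda\to M(X_L)$ and $q\colon M(X_L)\to\Lambda$ with $q\circ i=\id_\Lambda$, and the composition $q\circ i$ is just multiplication by the degree of the $0$-cycle in $\CH_0(X_L)$ corresponding to $i$, forcing that degree to equal $1$.

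To prove the first statement, if $\WR\simeq\WR'$ then for every $L/F$ the motives $\WR_L$ and $\WR'_L$ split simultaneously, and by the characterization above together with the splitting description of $\sim$ recalled before the proposition this forces $X\sim X'$. Conversely, suppose $X\sim X'$. When $1\in\deg\CH_0(X)$ (equivalently $1\in\deg\CH_0(X')$) both motives are already split and visibly isomorphic. Otherwise Lemma \ref{R indec} gives that $\WR$ and $\WR'$ are indecomposable; I would then choose multiplicity-$1$ correspondences $\alpha\colon X\corr X'$ and $\beta\colon X'\corr X$ (which exist by the definition of $\sim$) and form $f:=\rho'\circ\alpha\circ\rho$ and $g:=\rho\circ\beta\circ\rho'$. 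The compositions $gf$ and $fg$ are endomorphisms of $\WR$ and $\WR'$ of multiplicity $1$, hence automorphisms by Lemma \ref{mult1end}; this provides $f$ with both a left and a right inverse, yielding $\WR\simeq\WR'$.

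For the second statement I would keep $\WR$ indecomposable on $X$ with $X\sim X'$ and reuse correspondences $\alpha,\beta$ as above. The endomorphism $u:=\rho\circ\beta\circ\alpha\circ\rho$ of $\WR$ has multiplicity $1$ and is an automorphism by Lemma \ref{mult1end}. Setting $f:=\alpha\circ\rho\colon\WR\to M(X')$ and $g:=u^{-1}\circ\rho\circ\beta\colon M(X')\to\WR$, the relation $gf=\id_\WR$ forces $\rho':=fg$ to be an idempotent in $\End M(X')$, and $f,g$ exhibit an isomorphism $(X',\rho')\simeq\WR$. To verify that $(X',\rho')$ is itself an abstract Rost motive on $X'$, I would take any $L/F$ with $1\in\deg\CH_0(X'_L)$; by $X\sim X'$ also $1\in\deg\CH_0(X_L)$, so $(X',\rho')_L\simeq\WR_L$ has the required split form.

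The main technical obstacle I anticipate is the well-definedness and multiplicativity of multiplicity for correspondences between varieties that need not have the same dimension, which is required to conclude $\mult(gf)=\mult(fg)=1$ and $\mult u=1$; I would handle this by identifying multiplicity with the degree of the $0$-cycle obtained by restricting the correspondence to the generic fibre of the first projection, and invoking the standard projection-formula calculus.
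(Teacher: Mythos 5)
Your proof is correct and follows essentially the same route as the paper: the decisive step in both is to apply Lemma \ref{mult1end} to the multiplicity-$1$ composite of the two correspondences furnished by the equivalence $X\sim X'$, yielding that $\WR$ is a retract of $M(X')$ (resp.\ that $f$ is invertible). The only, harmless, divergence is in the forward direction of the first claim, where the paper argues directly via $\mult(g\compose f)=\mult(g)\cdot\mult(f)=1$ for the correspondences realizing the isomorphism, while you pass through the observation that splitness of $\WR_L$ forces $1\in\deg\CH_0(X_L)$.
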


\begin{proof}
Mutually inverse isomorphisms between $\WR$ and $\WR'$ living on varieties $X$ and
$X'$ are given by some correspondences $f:X\corr X'$ and $g:X'\corr X$.
Since $\WR=(X,g\compose f)$, we have $\mult(g\compose f)=1$.
As $\mult(g\compose f)=\mult(g)\cdot\mult(f)$, the correspondences $f$
and $g$ have prime to $p$ multiplicities showing that $X\sim X'$.

Now given an indecomposable $\WR$ living on $X$ and given some $X'$ equivalent to
$X$, we show that $\WR$ is a direct summand of the motive $M(X')$ of $X'$.
The equivalence $X\sim X'$ provides us with multiplicity $1$ correspondences
$f:X\corr X'$ and $g:X'\corr X$.
The composition $g\compose f$ considered on $\WR$ is a multiplicity $1$
endomorphism of $\WR$.
This endomorphism is an automorphism by Lemma \ref{mult1end}.

Finally, if we are given some $\WR$ and $\WR'$ living on some equivalent $X$
and $X'$, and we want to show that $\WR\simeq\WR'$, then we may assume that
$\WR$ and $\WR'$ are indecomposable
and consider morphisms $\WR\to \WR'$ and $\WR'\to \WR$ given by multiplicity $1$ correspondences
$f:X\corr X'$ and $g:X'\corr X$.
Repeating the above argument, we show that $\WR$ is a direct summand of
$\WR'$.
Therefore $\WR\simeq \WR'$ by indecomposability of $\WR'$.
\end{proof}

\begin{cor}
Abstract Rost motives with coefficients in $\Z_{(p)}$, becoming isomorphic
after the change of coefficients $\Z_{(p)}\to\F_p$, are isomorphic.
\qed
\end{cor}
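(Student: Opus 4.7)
The plan is to deduce the corollary directly from Proposition \ref{R and R}, exploiting the fact that the equivalence relation on varieties used there does not depend on whether we work with $\Z_{(p)}$- or $\F_p$-coefficients.

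First I would check that reduction of coefficients preserves the abstract Rost motive property. If $\WR = (X,\rho)$ is an abstract Rost motive with coefficients in $\Z_{(p)}$, then its image $\WR\otimes_{\Z_{(p)}}\F_p = (X, \rho\otimes 1)$ is a motive with coefficients in $\F_p$ living on the same variety $X$. For any field extension $L/F$ with $1\in\deg\CH_0(X_L)$, reducing the given splitting
\[
\WR_L \simeq \Z_{(p)}\oplus\Z_{(p)}(b)\oplus\dots\oplus\Z_{(p)}((p-1)b)
\]
modulo $p$ yields the analogous splitting of $\WR_L\otimes\F_p$. Moreover the condition $1\in\deg\CH_0(X_L)$ is intrinsic to the variety (it merely asks that $X_L$ has a closed point of prime-to-$p$ degree), so it has the same meaning for the two coefficient rings. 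Hence $\WR\otimes\F_p$ is an abstract Rost motive with coefficients in $\F_p$, again living on $X$.

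Next, suppose $\WR$ and $\WR'$ are abstract Rost motives with coefficients in $\Z_{(p)}$, living on varieties $X$ and $X'$ respectively, and that $\WR\otimes\F_p\simeq\WR'\otimes\F_p$. By the first step, these reductions are abstract Rost motives with coefficients in $\F_p$ on the same varieties $X$ and $X'$. Proposition \ref{R and R}, applied with $\Lambda=\F_p$, then tells us that $X$ and $X'$ are equivalent with respect to the $\F_p$-equivalence relation. The paper has already observed (in the discussion preceding Proposition \ref{R and R}) that the equivalence relations defined for $\Z_{(p)}$ and for $\F_p$ coincide, since each of them says that $1\in\deg\CH_0(X_L)\iff 1\in\deg\CH_0(X'_L)$ for every extension $L/F$. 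Consequently $X$ and $X'$ are also equivalent with respect to the $\Z_{(p)}$-equivalence relation.

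Finally, applying Proposition \ref{R and R} a second time, now with $\Lambda=\Z_{(p)}$, to the equivalent varieties $X$ and $X'$ carrying the abstract Rost motives $\WR$ and $\WR'$, we conclude that $\WR\simeq\WR'$ as motives with coefficients in $\Z_{(p)}$. There is really no hard step here; the only thing to verify carefully is the stability of the definition of an abstract Rost motive under reduction $\Z_{(p)}\to\F_p$, and this is immediate from the definitions.
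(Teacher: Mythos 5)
Your proposal is correct and follows exactly the route the paper intends: the corollary is stated with no proof precisely because it is an immediate consequence of Proposition \ref{R and R} (applied first with $\F_p$- and then with $\Z_{(p)}$-coefficients) together with the observation, already made in the text preceding that proposition, that the equivalence relations on varieties for the two coefficient rings coincide. Your explicit check that reduction of coefficients sends an abstract Rost motive to an abstract Rost motive is the only detail the paper leaves implicit, and you handle it correctly.
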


We recall that {\em canonical $p$-dimension} $\cd_p X$ of a smooth complete
irreducible variety $X$ is the least dimension of a closed subvariety
$Y\subset X$ possessing a multiplicity $1$ correspondence $X\corr Y$,
cf. \cite{MR2258262}.
One always has $\cd_p X\leq\dim X$, and $X$ is called {\em
$p$-incompressible} in the case of equality.
Canonical $p$-dimensions of equivalent varieties coincide:

\begin{lemma}
\label{cdp=cdp}
If $X\sim X'$, then $\cd_p X=\cd_p X'$.
\end{lemma}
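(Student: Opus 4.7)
The plan is to exploit the symmetry of the equivalence $\sim$: it suffices to show $\cd_p X' \leq \cd_p X$. Set $d = \cd_p X$ and fix a closed subvariety $Y \subset X$ of dimension $d$ equipped with a multiplicity $1$ correspondence $\alpha \colon X \corr Y$. The inclusion $\iota \colon Y \hookrightarrow X$ furnishes a tautological multiplicity $1$ correspondence $Y \corr X$. Let $f \colon X \corr X'$ and $g \colon X' \corr X$ be multiplicity $1$ correspondences witnessing $X \sim X'$. Since $X$ is smooth, the compositions $\alpha \circ g \colon X' \corr Y$ and $f \circ \iota \colon Y \corr X'$ are well-defined and again of multiplicity $1$.

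To produce the desired $Y' \subset X'$, I would decompose $f \circ \iota = \sum n_j [V_j]$, where each $V_j \subset Y \times X'$ is irreducible of dimension $d$. Multiplicity $1$ forces some component $V = V_j$ to dominate $Y$ generically finitely, with $n_j \cdot \deg(V/Y) \in \Lambda^\times$. Set $Y' := \pi_{X'}(V) \subset X'$, a closed subvariety of dimension at most $\dim V = d$. The cycle $[V] \in \CH_d(Y \times Y')$ defines a correspondence $Y \corr Y'$ of unit multiplicity. Composing with $\alpha \circ g \colon X' \corr Y$ and rescaling by a unit in $\Lambda$ then yields a multiplicity $1$ correspondence $X' \corr Y'$, and thus $\cd_p X' \leq \dim Y' \leq d$.

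The main technical obstacle is that this last composition formally passes through $Y$, which need not be smooth, so the standard composition of correspondences — which requires the intermediate variety to be smooth in order to realize it as an intersection along the regularly embedded diagonal — is not directly available. I would resolve this by replacing $Y$ with a smooth model $\widetilde Y$ birational (or at worst generically finite of degree prime to $p$) to $Y$: available via resolution of singularities in characteristic $0$ or via prime-to-$p$ alteration in general, so that $\widetilde Y \sim Y$ in $\Lambda$. Carrying out the entire construction with $\widetilde Y$ in place of $Y$ makes all intermediate varieties smooth, and every composition becomes unambiguous. Equivalently, one may simply quote the invariance of canonical $p$-dimension under multiplicity $1$ correspondences, which is part of the foundational material of \cite{MR2258262}.
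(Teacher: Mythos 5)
Your overall strategy---transport a minimal witness $Y\subset X$ to a subvariety $Y'\subset X'$ by composing with the multiplicity~$1$ correspondences relating $X$ and $X'$---is, up to the symmetric choice of direction, the same as the paper's, and you correctly isolate the real difficulty: the final composition has to pass through the closed subvariety $Y$ (or $Y'$), which need not be smooth. The problem is that neither of your proposed repairs closes this gap. Passing to a smooth model $\widetilde Y\to Y$ does not work as stated, because you would then need to lift the multiplicity~$1$ correspondence $\alpha\colon X\rightsquigarrow Y$ to a correspondence $X\rightsquigarrow\widetilde Y$ of multiplicity prime to $p$. Restricted to the generic point of $X$, $\alpha$ is a $0$-cycle of degree $1$ on $Y_{F(X)}$ whose support has no reason to avoid the locus over which $\widetilde Y\to Y$ fails to be an isomorphism; over such points the fibres of a proper birational morphism (or of a prime-to-$p$ alteration) are positive-dimensional proper schemes that need not carry any $0$-cycle of degree prime to $p$, so the required lift may simply not exist. (Also, ``$\widetilde Y\sim Y$'' is not meaningful in the paper's framework, where $\sim$ is defined only for smooth complete varieties.) Your fallback---``quote the invariance of canonical $p$-dimension under multiplicity~$1$ correspondences''---is circular: that invariance \emph{is} the statement being proved, and it is not available in that form in the cited reference.

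The paper avoids composing through singular varieties altogether. It converts each prime correspondence of prime-to-$p$ multiplicity into a finite prime-to-$p$ extension of function fields, uses \cite[Lemma 3.1]{MR2258262} to complete the resulting diagram of fields to a common over-field $L$ that contains $F(Y)$ and is finite of prime-to-$p$ degree over $F(X')$, and then takes the closure of the image of a rational map $U\dashrightarrow X'\times Y$ from a model $U$ of $L$. This produces the needed prime correspondence $X'\rightsquigarrow Y$ directly (its multiplicity is the field degree $[L:F(X')]$), and the only composition of correspondences ever performed is through the smooth complete variety $X'$. If you prefer to keep an argument in your style, the clean repair is the one indicated in the remark following the lemma: use the equivalent description of $\mathrm{cd}_p$ as the essential $p$-dimension of the class of field extensions $L/F$ with $1\in\deg\mathrm{CH}_0(X_L)$; equivalent varieties determine the same class of fields, so the equality is immediate.
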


\begin{proof}
Assuming that $X\sim X'$, it suffices to show that $\cd_p X\leq\cd_p X'$.
Let $Y'$ be a closed irreducible subvariety of $X'$ with a multiplicity $1$
correspondence $X'\corr Y'$ and with $\dim Y'=\cd_p X'$.
Then there exists a prime correspondence $X'\corr Y'$ of prime to $p$
multiplicity.
Such a correspondence is given by an irreducible closed subvariety
$Z'$ in $X'\times Y'$ such that the projection $Z'\to X'$ is surjective
and the field extension $F(X')\hookrightarrow F(Z')$ is of finite
prime to $p$ degree.
By minimality of $Y'$, the projection $Z'\to Y'$ is also surjective.

The variety $X'$ having an $F(Y')$-point, the variety
$X_{F(Y')}$ has a $0$-cycle of degree $1$.
Consequently, there exists a closed subvariety $Z\subset Y'\times X$
surjective over $Y'$ with $F(Y')\hookrightarrow F(Z)$ of finite prime to $p$
degree.
Let $Y\subset X$ be the image of the projection $Z\to X$.

We have obtained a diagram of fields, shown below on the left, in which the vertical
embeddings are of finite prime to $p$ degrees.
By \cite[Lemma 3.1]{MR2258262}, it can be completed to a commutative
diagram of fields, shown on the right, in which the vertical
embeddings are still of finite prime to $p$ degrees:
{\Small
$$
\begin{CD}
F(Y)@>>>F(Z) @. \\
@. @AAA @. \\
@. F(Y') @>>> F(Z')\\
@. @. @AAA \\
@. @. F(X')
\end{CD}
\hspace{12ex}
\begin{CD}
F(Y)@>>>F(Z) @>>> L \\
@. @AAA @AAA \\
@. F(Y') @>>> F(Z')\\
@. @. @AAA \\
@. @. F(X')
\end{CD}
$$
}
Taking a model $U$ for $L$ and considering the class of the closure of the image of the
induced rational map $U\to X'\times Y$, we get a prime correspondence $X'\corr Y$ of a prime to $p$ multiplicity,
showing that there exists a
multiplicity $1$ correspondence $X'\corr Y$.
Composing it with a multiplicity $1$ correspondence $X\corr X'$ (the composition is defined because $X'$ is
smooth complete), we get a
multiplicity $1$ correspondence $X\corr Y$ showing that $\cd_p X\leq\dim Y\leq \dim Z=\dim Y'=\cd_p
X'$.
\end{proof}

\begin{rem}
Lemma \ref{cdp=cdp} is easier to prove out of the (equivalent) definition
of canonical $p$-dimension of a smooth complete variety $X$ as the
{\em essential $p$-dimension} of the class of fields $L/F$ with
$X(L)\ne\emptyset$, given in \cite[\S1.6]{ASM-ed}.
Indeed, enlarging the above class of fields to the class
of fields $L/F$ with $1\in\deg\CH_0(X_L)$ keeps its essential $p$-dimension.
And, as we already mentioned, such enlarged classes of fields given by equivalent varieties coincide.
\end{rem}

\begin{lemma}
\label{cdp}
If an indecomposable abstract Rost motive (of degree $n+1$) lives on a variety $X$,
then $\cd_p X\geq p^n-1$.
\end{lemma}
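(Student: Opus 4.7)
The plan is to argue by contradiction. Suppose $\cd_p X < p^n-1 = (p-1)b$ (with $b = (p^n-1)/(p-1)$); then there is a closed subvariety $Y \subsetneq X$ admitting a multiplicity $1$ correspondence $\alpha \in \CH^{\dim Y}(X \times Y)$ with $d := \dim Y < (p-1)b$. Decomposing $\alpha$ over the irreducible components of $Y$ and using that $\Lambda$ is local (so that at least one component's contribution has multiplicity a unit), I may assume $Y$ is irreducible.

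Next I would lift $\alpha$ to a self-correspondence of $X$. Writing $i_Y \colon Y \hookrightarrow X$ for the inclusion and setting $\beta := (\id_X \times i_Y)_* \alpha \in \CH^{\dim X}(X \times X)$, the compatibility $\pr_1 \circ (\id_X \times i_Y) = \pr_1$ gives $\mult \beta = \mult \alpha = 1$. Hence $\gamma := \rho \circ \beta \circ \rho \in \End \WR$ has multiplicity $1$ as well, and by Lemma \ref{mult1end}, applicable because $\WR$ is indecomposable, $\gamma$ is an automorphism of $\WR$.

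I would then pass to a field extension $E/F$ with $1 \in \deg \CH_0(X_E)$; over such $E$ the defining property of an abstract Rost motive of degree $n+1$ yields $\WR_E \simeq \bigoplus_{j=0}^{p-1} \Lambda(jb)$, so that $\CH^{(p-1)b}(\WR_E) = \rho^* \CH^{(p-1)b}(X_E)$ is a free $\Lambda$-module of rank one. Let $u$ be a generator, so $\rho^* u = u$. Since $X_E$ is smooth and projective, Chow's moving lemma lets me represent $u$ by a cycle $u'$ of dimension $\dim X - (p-1)b$ whose support meets $Y_E$ properly inside $X_E$; the expected dimension of this intersection is $(\dim X - (p-1)b) + d - \dim X = d - (p-1)b$, which is negative, so in fact $|u'| \cap Y_E = \emptyset$.

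To finish I would compute $\gamma^* u$. Because $\beta$ is supported on $X_E \times Y_E$ while $\pr_2^* u'$ is supported on $X_E \times |u'|$, the two supports are disjoint in $X_E \times X_E$, so $\beta \cdot \pr_2^* u' = 0$ in $\CH(X_E \times X_E)$. Thus $\beta^* u = (\pr_1)_*(\beta \cdot \pr_2^* u) = 0$ and $\gamma^* u = \rho^* \beta^* \rho^* u = \rho^*(0) = 0$. But any automorphism of $\WR_E$ acts on the rank-one $\Lambda$-module $\CH^{(p-1)b}(\WR_E) = \Lambda u$ by a unit of $\Lambda$, forcing $\gamma^* u \ne 0$; this contradiction will give $\cd_p X \geq p^n - 1$. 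The technical point to handle with care is that $Y$ need not be smooth, ruling out a direct pullback along $i_Y$; the argument sidesteps this by moving $u$ on the smooth ambient $X_E$, and the assumed dimension bound $d < (p-1)b$ is exactly what makes the moved representative avoid $Y_E$.
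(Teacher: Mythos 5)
Your proof is correct and takes essentially the same route as the paper's: both convert the multiplicity~$1$ correspondence $X\rightsquigarrow Y$ into the multiplicity~$1$ endomorphism $\rho\circ\beta\circ\rho$ of the Rost motive, invoke Lemma~\ref{mult1end} to conclude it is an automorphism, and contradict this with the vanishing of its action on the rank-one group $\mathrm{CH}^{p^n-1}$ of the Rost motive over a splitting field. The only difference is in how that vanishing is justified: the paper asserts it directly from the dimension count $\dim Y<p^n-1$ (the action factors through a Chow group of $Y$ in codimension exceeding $\dim Y$), while you realize it by moving the generating cycle off $Y$ --- which is fine, modulo the usual quasi-projectivity hypothesis in Chow's moving lemma.
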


\begin{proof}
For a closed subvariety $Y\subset X$ with a multiplicity $1$
correspondence $X\corr Y$, we consider the endomorphism $\alpha\in\End \WR$
given by the composition of correspondences
$X\corr Y\hookrightarrow X$.
(More explicitly, $\alpha$ is the composition $X\corr X\corr Y\hookrightarrow X\corr X$ with
$X\corr X$ being the projector of $R$.)
Since $\mult\alpha=1$, $\alpha$ is invertible by Lemma \ref{mult1end}.
On the other hand, the very last component of $\alpha_{F(X)}$ can be nonzero only if $\dim Y\geq
p^n-1$.
Indeed, this last component is given by the action of $\alpha_{F(X)}^*$ on
$\CH^{p^n-1}(\WR_{F(X)})\subset\CH^{p^n-1}(X_{F(X)})$, but since $\alpha$
lies in the image of the push-forward $\CH(X\times Y)\to\CH(X\times X)$, the action on the
whole group $\CH^{p^n-1}(X_{F(X)})$ is trivial if $\dim Y<p^n-1$.
\end{proof}

\begin{cor}
\label{aRinc}
If an indecomposable abstract Rost motive lives on a variety $X$ of dimension $p^n-1$,
then $X$ is $p$-incompressible.
\qed
\end{cor}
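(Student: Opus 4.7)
The statement follows almost immediately by combining Lemma \ref{cdp} with the definition of $p$-incompressibility, so the plan is really just to assemble these two ingredients.

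First, I would recall the relevant definitions from the paragraph preceding Lemma \ref{cdp=cdp}: the canonical $p$-dimension $\cd_p X$ of a smooth complete irreducible variety always satisfies $\cd_p X \leq \dim X$, with equality being precisely the definition of $p$-incompressibility. So to prove the corollary it suffices to establish the reverse inequality $\cd_p X \geq \dim X$ under the hypotheses.

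The key input is Lemma \ref{cdp}, which gives $\cd_p X \geq p^n - 1$ whenever an indecomposable abstract Rost motive of degree $n+1$ lives on $X$. In our situation the assumption $\dim X = p^n - 1$ immediately yields $\cd_p X \geq p^n - 1 = \dim X$, and combined with the trivial bound $\cd_p X \leq \dim X$ we conclude $\cd_p X = \dim X$. By definition this is exactly $p$-incompressibility of $X$.

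There is no real obstacle here since all the substantive content has already been extracted in Lemma \ref{cdp}; the proof is a one-line deduction from that lemma together with the tautological upper bound $\cd_p X \leq \dim X$. The proof in the paper presumably just writes \textqed\ or cites Lemma \ref{cdp} and the definition, which matches the \verb|\qed| placement in the corollary's statement.
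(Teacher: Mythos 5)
Your proof is correct and is exactly the paper's (implicit) argument: the corollary is stated with an immediate \verb|\qed| because it follows from Lemma \ref{cdp} ($\cd_p X\geq p^n-1$) combined with the tautological bound $\cd_p X\leq\dim X$ and the definition of $p$-incompressibility as equality. Nothing is missing.
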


The following Lemma is inspired by \cite[Lemma 9.3]{markus}:

\begin{lemma}
\label{ura}
If $X$ is $p$-incompressible, then for any $i>0$ and any $\alpha\in\CH^i(X)$ and
$\beta\in\CH_i(X_{F(X)})$,
the degree of the $0$-cycle class given by the
product $\alpha_{F(X)}\cdot\beta\in\CH_0(X_{F(X)})$ is divisible by $p$.
\end{lemma}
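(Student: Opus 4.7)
The plan is to lift $\beta$ to a cycle on $X\times X$, rephrase $\deg(\alpha_{F(X)}\cdot\beta)$ as the multiplicity of a correspondence $X\corr X$ that factors through a proper closed subvariety, and then invoke $p$-incompressibility to force divisibility by $p$. Throughout, write $d=\dim X$, let $p_1,p_2\colon X\times X\to X$ be the two projections, and identify $X_{F(X)}$ with the generic fiber of $p_1$.

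First I would use the surjection $\CH^{d-i}(X\times X)\to\CH^{d-i}(X_{F(X)})$ coming from the localization sequence to pick a lift $\gamma\in\CH^{d-i}(X\times X)$ of $\beta$. The intersection $\gamma\cdot p_2^*\alpha$ then lies in $\CH^d(X\times X)=\CH_d(X\times X)$, and since restriction to the generic fiber of $p_1$ is a ring homomorphism sending $\gamma$ to $\beta$ and $p_2^*\alpha$ to $\alpha_{F(X)}$, it sends this product to $\alpha_{F(X)}\cdot\beta$. For any $d$-dimensional cycle class on $X\times X$, the coefficient of $[X]$ in its $p_1$-pushforward equals the degree over $F(X)$ of its restriction to the generic fiber of $p_1$; hence if $p_{1*}(\gamma\cdot p_2^*\alpha)=m\,[X]$, then $m=\deg(\alpha_{F(X)}\cdot\beta)$.

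Second, since $i>0$ the cycle $\alpha$ is supported on some proper closed subvariety $Y\subsetneq X$, and we may write $\alpha=i_*\alpha'$ for $i\colon Y\hookrightarrow X$. Flat base change along $p_2$ together with the projection formula for the proper closed embedding $\id_X\times i$ produces a class $\delta\in\CH_d(X\times Y)$ with $(\id_X\times i)_*\delta=\gamma\cdot p_2^*\alpha$. This $\delta$ lies in $\CH_{\dim X}(X\times Y)$ and is therefore a correspondence $X\corr Y$ whose multiplicity equals $m$, because $p_{1*}(\id_X\times i)_*\delta=m\,[X]$. Since $X$ is $p$-incompressible and $Y\subsetneq X$ is proper, no correspondence $X\corr Y$ has multiplicity $1$; as the set of such multiplicities is an ideal in $\Lambda\in\{\Z_{(p)},\F_p\}$, it is consequently contained in $(p)$, giving $p\mid m=\deg(\alpha_{F(X)}\cdot\beta)$.

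The one delicate point is the identification in the first step: one must check that restriction to the generic fiber of $p_1$ intertwines the intersection product on $X\times X$ with that on $X_{F(X)}$, and that the $[X]$-coefficient of a $p_1$-pushforward really does recover the generic $F(X)$-degree of a $d$-cycle. Once these compatibilities are in hand, the application of $p$-incompressibility is immediate and the entire content of the lemma is reduced to the bookkeeping that repackages $\alpha_{F(X)}\cdot\beta$ as the multiplicity of a correspondence to a proper subvariety.
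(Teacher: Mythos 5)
Your proof is correct and takes essentially the same route as the paper's: both arguments repackage $\deg(\alpha_{F(X)}\cdot\beta)$ as the multiplicity of a correspondence from $X$ to the proper closed support of $\alpha$ (proper because $i>0$) and then appeal to $p$-incompressibility to force that multiplicity into $p\Lambda$. The paper does the bookkeeping directly on the generic fiber, first reducing to $\alpha=[Y]$ with $Y$ irreducible and observing that $[Y]_{F(X)}\cdot\beta$ is represented by a $0$-cycle class on $Y_{F(X)}$, whereas you perform the equivalent lifting and projection-formula computation on $X\times X$; the difference is purely presentational.
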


\begin{proof}
If $\deg(\alpha_{F(X)}\cdot\beta)$ is not divisible by $p$ for some
$\alpha\in\CH^i(X)$ and $\beta\in\CH_i(X_{F(X)})$ with positive $i$,
we can find a closed irreducible subvariety $Y\subset X$
of codimension $i$
with $\deg([Y]_{F(X)}\cdot\beta)$ not divisible by $p$.
Since the product $[Y]_{F(X)}\cdot\beta$ is represented by a $0$-cycle
class on $Y_{F(X)}$, there exists a multiplicity $1$ correspondence
$X\corr Y$ showing that $\cd_p X\leq \dim X-i<\dim X$  contradicting the assumption that
$X$ is $p$-incompressible.
\end{proof}

\begin{cor}[{cf. \cite[Lemma 9.3]{markus}}]
\label{cor3.11}
If an abstract Rost motive lives on a variety $X$ of dimension $p^n-1$ and such that
$1\not\in\deg\CH_0(X)$,
then for any $i>0$ and any $\alpha\in\CH^i(X)$ and
$\beta\in\CH_i(X_{F(X)})$,
the degree of the product $\alpha_{F(X)}\cdot\beta\in\CH_0(X_{F(X)})$ is
divisible by $p$.
\qed
\end{cor}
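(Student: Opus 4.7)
The plan is to observe that this corollary is essentially a direct chaining of three results already established just above in the text, so there is no substantial new content to prove; the work is to verify that the hypotheses of Lemma \ref{ura} apply to $X$.

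First I would invoke Lemma \ref{R indec}: since $1\not\in\deg\CH_0(X)$ by hypothesis, the abstract Rost motive $\WR$ living on $X$ is indecomposable. Next, since $\WR$ is an indecomposable abstract Rost motive of degree $n+1$ living on the variety $X$ of dimension exactly $p^n-1$, Corollary \ref{aRinc} applies and tells us that $X$ is $p$-incompressible.

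Having established $p$-incompressibility of $X$, Lemma \ref{ura} applies directly: for every $i>0$, every $\alpha\in\CH^i(X)$, and every $\beta\in\CH_i(X_{F(X)})$, the degree of the zero-cycle $\alpha_{F(X)}\cdot\beta\in\CH_0(X_{F(X)})$ is divisible by $p$. This is exactly the claim of the corollary.

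The only thing one might call an obstacle is a bookkeeping check that the coefficient ring convention in the current section ($\Lambda=\Z_{(p)}$ or $\F_p$) is consistent with the usage in Lemma \ref{ura} and Corollary \ref{aRinc}, so that divisibility by $p$ and the definition of $p$-incompressibility line up correctly; this is immediate from the fact that $p$-incompressibility is a property of multiplicities of correspondences independent of $\Lambda$, and that the conclusion ``degree divisible by $p$'' makes sense and transfers verbatim between the two coefficient choices. No further computation is required.
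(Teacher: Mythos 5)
Your proposal is correct and is exactly the argument the paper intends: the corollary is stated with a \qed precisely because it is the immediate chaining of Lemma \ref{R indec} (indecomposability from $1\not\in\deg\CH_0(X)$), Corollary \ref{aRinc} ($p$-incompressibility since $\dim X=p^n-1$), and Lemma \ref{ura}. No discrepancy with the paper's route.
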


\section
{Generic splitting varieties}
\label{Generic splitting varieties}

In this section $\Lambda$ is $\Z_{(p)}$ or $\F_p$ and
the base field $F$ is of characteristic $\ne p$ if not specified otherwise.

For $n\geq1$, an element $s\in H^{n+1}(F,\mu_p^{\otimes n})$ is a {\em symbol}, if
it is equal to the cup product of an element of $H^1(F,\Z/p\Z)$ and $n$
elements of $H^1(F,\mu_p)$.
A smooth complete geometrically irreducible $F$-variety $X$ is a {\em $p$-generic splitting variety}
of a symbol $s$, if for any field extension $L/F$ one has $s_L=0$ if and
only if $1\in\deg\CH_0(X_L)$ (it is a {\em generic splitting variety}
of $s$, if $s_L=0$ $\Leftrightarrow$ $X(L)\ne\emptyset$).

Clearly, given a symbol $s$ and a $p$-generic splitting variety $X$ of $s$,
a smooth complete geometrically irreducible variety $X'$
is also a $p$-generic splitting variety of $s$ if and only if $X\sim X'$.

A symbol $s'$ is {\em similar} to $s$, if $s'=as$ for a nonzero
$a\in\Z/p\Z$.
Similar symbols vanish over precisely the same fields so that
$p$-generic splitting varieties of similar symbols are equivalent.

According to
\cite{MR2220090},
in characteristic $0$,
for any symbol $s$, there exists a $p$-generic splitting variety of
dimension $p^n-1$.
The construction of such varieties
is recalled in Section
\ref{Subsection $A$-triviality of standard norm varieties}.

An abstract Rost motive $\WR=\WR_s$ living on a $p$-generic splitting variety of a
symbol $s\in H^{n+1}(F,\mu_p^{\otimes n})$ is called a {\em Rost motive}
of the symbol.

\begin{thm}
Assume that $\Char F=0$.
For any symbol $s\in H^{n+1}(F,\mu_p^{\otimes n})$,
a Rost motive $\WR_s$ of $s$ exists.
Moreover,
the isomorphism class of $\WR_s$ determines and is determined by the similarity class of $s$.
\end{thm}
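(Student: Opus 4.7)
The proof splits into existence and uniqueness. For existence, by \cite{MR2220090} there is, in characteristic $0$, a standard norm variety $X$ of $s$ of dimension $p^n-1$, which is a $p$-generic splitting variety. The task is to exhibit a projector $\rho\in\CH^{p^n-1}(X\times X)$ whose base change to any extension $L/F$ with $1\in\deg\CH_0(X_L)$ makes $(X,\rho)_L\simeq\Lambda\oplus\Lambda(b)\oplus\dots\oplus\Lambda((p-1)b)$, where $b=(p^n-1)/(p-1)$. I would obtain such a $\rho$ via the Rost--Voevodsky construction of a Rost motive: over $\bar F$ the variety $X$ is cellular enough to carve out a direct summand isomorphic to $\Lambda\oplus\Lambda(b)\oplus\dots\oplus\Lambda((p-1)b)$ inside $M(X_{\bar F})$, Rost nilpotence descends the resulting projector from $\bar F$ to $F$, and the decomposition over an arbitrary splitting field $L$ then follows from a transfer argument along a closed point of $X_L$ of degree prime to $p$ combined with the Krull--Schmidt theorem of \cite{MR2264459}.

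For the ``similar $\Rightarrow$ isomorphic motive'' direction, if $s'=as$ with $a\in(\Z/p)^\times$, then $s$ and $s'$ vanish over exactly the same extensions of $F$, so any $p$-generic splitting variety of $s$ is equivalent to any $p$-generic splitting variety of $s'$. Proposition~\ref{R and R} then gives $\WR_s\simeq\WR_{s'}$.

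For the converse, suppose $\WR_s\simeq\WR_{s'}$. If $s=0$, then any $p$-generic splitting variety of $s$ has $1\in\deg\CH_0$, so $\WR_s$ is completely decomposable; the same must hold for $\WR_{s'}$, and Lemma~\ref{R indec} forces $1\in\deg\CH_0(X')$ for any $p$-generic splitting variety $X'$ of $s'$, i.e.\ $s'=0$. Otherwise $\WR_s$ is indecomposable by Lemma~\ref{R indec}, hence so is $\WR_{s'}$, and $s'\ne0$ by the same lemma applied in reverse. Proposition~\ref{R and R} then yields $X\sim X'$ for $p$-generic splitting varieties $X$ of $s$ and $X'$ of $s'$; in particular $s'_{F(X)}=0$. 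At this point I would invoke Voevodsky's theorem (a consequence of the norm residue isomorphism in degree $n+1$) identifying the set of symbols in the kernel of $H^{n+1}(F,\mu_p^{\otimes n})\to H^{n+1}(F(X),\mu_p^{\otimes n})$ with $\F_p\cdot s$; hence $s'\in\F_p^\times\cdot s$ and $s'$ is similar to $s$.

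The main obstacle is clearly the existence statement: constructing a Rost projector on a standard norm variety is a deep input from the Rost--Voevodsky program, not something formal from what precedes in the paper. The uniqueness halves are by contrast quite routine once Proposition~\ref{R and R} is available, the harder of the two resting only on the Bloch--Kato identification of symbol classes in the kernel of restriction to the function field of a $p$-generic splitting variety.
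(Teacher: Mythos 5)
Your two uniqueness directions are essentially the paper's own proof: ``similar $\Rightarrow$ isomorphic'' is exactly the application of Proposition~\ref{R and R} to equivalent $p$-generic splitting varieties, and for the converse the paper likewise reduces, via Proposition~\ref{R and R}, to the fact that two symbols vanishing over precisely the same extensions of $F$ are similar, citing \cite[Theorem~2.1]{MR2645334}; your variant, which first extracts $s'_{F(X)}=0$ from $X\sim X'$ and then identifies the symbols killed by $F(X)$ with $\mathbb{F}_p\cdot s$, is the same external input in a marginally stronger form, and your separate treatment of $s=0$ is harmless. The one point to flag is the existence sketch. Since both you and the paper ultimately cite \cite{MR2811603} and \cite{MR2220090}, no gap results in the end, but the mechanism you describe --- a cellular decomposition of $M(X_{\bar F})$ followed by Rost-nilpotence descent of the projector to $F$ --- is not how the construction can go and would fail if carried out literally: a standard norm variety is not cellular over $\bar F$ (it is merely unirational), and Rost nilpotence in the form needed to descend projectors from $\bar F$ is not known for norm varieties (the nilpotence principle \cite[Proposition~3.1]{MR2393083} used in Lemma~\ref{R indec} serves only to detect nonvanishing of already given summands). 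The actual construction, recalled in Appendix~\ref{Chow groups of Rost motives}, forms the symmetric power $\mathcal{R}=S^{p-1}(\mathcal{M})$ of the binary motive of $s$ in Voevodsky's triangulated category of motives and invokes \cite[\S5--\S6]{MR2811603} to realize it as a Chow motive on a norm variety; the splitting over any extension $L$ with $1\in\deg \mathrm{CH}_0(X_L)$ is then immediate from the vanishing of the class $\delta$ (hence of $\mu$) over $L$, with no transfer or Krull--Schmidt argument required. So: correct as a proof by citation, identical in structure to the paper's, but the descent argument you outline for existence should not be presented as if it could replace that citation.
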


\begin{proof}
The existence statement is proved in \cite{MR2811603} and \cite{MR2220090}.
By Proposition \ref{R and R}, the isomorphism class of $\WR_s$ is
determined by the similarity class of $s$.
Finally, if $\WR_{s'}\simeq\WR_s$ for a symbol $s'$,
the symbols $s$ and $s'$ vanish over precisely the same field extensions of $F$
and therefore are similar by \cite[Theorem 2.1]{MR2645334}.
\end{proof}

\begin{rem}
In characteristic $0$
one may show using
Theorem \ref{main} that for any Rost motive $\WR$ of a nonzero symbol
living on a variety $X$, the homomorphism of $\Z_{(p)}$-algebras
$$
\End\WR\to\End\WR_{F(X)}=(\Z_{(p)})^p
$$
is injective and has as image the unital $\Z_{(p)}$-subalgebra of $(\Z_{(p)})^p$ generated by
$p(\Z_{(p)})^p$.
This explains Lemmas \ref{pEnd} and \ref{mult1end}.
\end{rem}

Keeping the characteristic $0$ assumption,
it follows by Proposition \ref{R and R} that any $p$-generic splitting
variety of $s$ admits a Rost motive and the isomorphism class of a Rost motive on such a
variety only depends on $s$.
It follows also that $p^n-1$ is the least dimension of a $p$-generic splitting variety of a
symbol.
The $p$-generic splitting varieties of dimension $p^n-1$ are called {\em norm
varieties}.

\begin{thm}[Version for $\Lambda=\Z_{(p)}$ and $\Lambda=\F_p$]
\label{mainZp}
Let $F$ be a field of characteristic $0$.
Given an $A$-trivial $p$-generic splitting variety $X$ of a symbol
$s\in H^{n+1}(F,\mu_p^{\otimes n})$,
the change of field homomorphism $\CH(Y)\to\CH(Y_{F(X)})$
is surjective in codimensions $< (p^n-1)/(p-1)$
for any equidimensional variety $Y$.
It is also surjective in codimension $=(p^n-1)/(p-1)$ for a given $Y$ provided that
$s_{F(\zeta)}\ne0$
for each generic point $\zeta\in Y$.
\end{thm}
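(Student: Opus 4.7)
The plan is to apply Proposition \ref{general} to the projector $\rho$ of a Rost motive $\WR=\WR_s$ living on $X$. First I would invoke the existence theorem for Rost motives of symbols in characteristic $0$ together with Proposition \ref{R and R} to transfer such a Rost motive onto our $p$-generic splitting variety $X$. By the very definition $\mult\rho=1$, so the $A$-triviality of $X$ combined with Lemma \ref{criter} ensures that $\rho$ satisfies the hypotheses of Lemma \ref{xi v xi}, which is precisely what Proposition \ref{general} demands.

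Since $\rho^{*}\CH^{j}(X_{K})=\CH^{j}(\WR_{K})$ for every field extension $K/F$, the condition to verify at a codimension-$i$ point $y\in Y$ in Proposition \ref{general} becomes surjectivity of the change-of-field map
$$
\CH^{m-i}(\WR)\longrightarrow \CH^{m-i}(\WR_{F(y)}).
$$
I would then feed in the computation of Chow groups of Rost motives carried out in Appendix \ref{Chow groups of Rost motives}, whose relevant consequence is: for any field extension $K/F$, the change-of-field map $\CH^{j}(\WR)\to\CH^{j}(\WR_{K})$ is surjective whenever either (a) $0\leq j<b$, or (b) $j=b$ and $s_{K}\neq 0$. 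In the excluded case $j=b$, $s_{K}=0$, one has $\CH^{b}(\WR_{K})\simeq\Lambda$ but the image is typically only $p\Lambda$, which is exactly what forces the extra hypothesis in the codimension-$b$ assertion.

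Granted (a) and (b), the theorem falls out of Proposition \ref{general}. For $m<b$, every codimension-$i$ point $y\in Y$ satisfies $m-i<b$, so (a) supplies the required surjectivity and we conclude that $\CH^{m}(Y)\to\CH^{m}(Y_{F(X)})$ is surjective. For $m=b$, the points of codimension $i\geq 1$ again fall under (a); for $i=0$ the point $y$ is a generic point $\zeta$ of $Y$, and the standing hypothesis $s_{F(\zeta)}\neq 0$ together with (b) provides the needed surjectivity in codimension $b$. Proposition \ref{general} then finishes the argument in both cases.

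The principal obstacle is the Chow-group computation in Appendix \ref{Chow groups of Rost motives}, most delicately item (b): showing that when $s_{K}\neq 0$ the image of $\CH^{b}(\WR)$ already fills $\CH^{b}(\WR_{K})$. The range $0<j<b$ is softer because $\CH^{j}(\WR_{K})$ vanishes whenever $K$ splits $s$ and is tightly constrained otherwise, but the codimension-$b$ case is precisely where one must control the ``leading term'' of the Rost motive, and that is exactly what the condition $s_{F(\zeta)}\neq 0$ is designed to make accessible.
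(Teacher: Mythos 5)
Your proposal follows essentially the same route as the paper: put a Rost projector $\rho$ on $X$, verify the hypotheses of Proposition \ref{general} via $A$-triviality and Lemma \ref{criter}, identify $\rho^*\CH(X_K)$ with the Chow groups of the Rost motive, and feed in Theorem \ref{main} exactly as you describe, including the codimension-$b$ dichotomy governed by $s_{F(\zeta)}$. The only point the paper treats separately that you leave implicit is the degenerate case $s=0$, where Theorem \ref{main} does not apply (it concerns nontrivial symbols) and the whole statement is instead immediate from Lemma \ref{surj} since $1\in\deg\CH_0(X)$.
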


\begin{proof}
If $s=0$, then $1\in\deg\CH_0(X)$ and the statement of Theorem \ref{mainZp} is a particular case
of Lemma \ref{surj}.
Below in the proof we are assuming that $s\ne0$.

Let $\rho$ be a projector on $X$ giving the Rost motive.
By Proposition \ref{general}, to prove the first statement of Theorem \ref{mainZp},
it suffices to check that
for any field extension $L/F$ the change of field homomorphism
$$
\rho^*\CH(X)\to\rho^*\CH(X_L)
$$
is surjective in codimension $<m:=(p^n-1)/(p-1)$.
This condition is satisfied by Theorem \ref{main}.

To prove the second statement of Theorem \ref{mainZp}, it suffices to
additionally check that
for any generic point $\zeta\in Y$ the change of field homomorphism
$$
\rho^*\CH^m(X)\to\rho^*\CH^m(X_{F(\zeta)})
$$
is surjective.
Since
$s_{F(\zeta)}\ne0$,
this condition is satisfied by Theorem \ref{main}
as well.
\end{proof}

Our main example of $X$ for which Theorem \ref{mainZp} can be applied is
given by the standard norm variety of a symbol in $H^{n+1}(F,\mu_p^{\otimes
n})$, constructed in Section \ref{Subsection $A$-triviality of standard norm varieties}.
Such a variety is $A$-trivial by Theorem \ref{chowmain}.

The standard norm variety $X$ of a nonzero symbol also provides an example showing that
the boundary $b:=(p^n-1)/(p-1)$ of the first (and main) statement of Theorem
\ref{mainZp} is sharp.
Indeed, the element $H\in\CH^b(X_{F(X)})$ considered in Section \ref{Special
correspondences}, does not come from $F$.

A construction similar to \cite[Proof of Theorem
3.4]{MR2804267}, proves

\begin{cor}
\label{alaVishik}
For any field $F$ of characteristic $0$, any prime $p$ and any integer
$n\geq1$,
there exists a field extension $F'/F$ such that
$H^{n+1}(F',\mu_p^{\otimes n})=0$ and $\CH(Y)\to\CH(Y_{F'})$ is surjective in codimensions $< (p^n-1)/(p-1)$
for any equidimensional $F$-variety $Y$.
\qed
\end{cor}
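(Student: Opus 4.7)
The plan is to construct $F'$ as the union of a tower $F = F_0 \subset F_1 \subset F_2 \subset \cdots$ obtained by iteratively killing every nontrivial symbol in $H^{n+1}(-,\mu_p^{\otimes n})$ via function fields of standard norm varieties, in the spirit of Vishik's construction \cite[Proof of Theorem 3.4]{MR2804267}. To pass from $F_k$ to $F_{k+1}$, I would well-order the nontrivial symbols $\{s_i\}_{i\in I_k}$ in $H^{n+1}(F_k,\mu_p^{\otimes n})$ by some ordinal and build an inner transfinite chain $F_k = F_k^{(0)} \subset F_k^{(1)} \subset \cdots$ whose successor steps are function fields of standard norm varieties of (the image of) the current symbol---such varieties exist in characteristic $0$ by \cite{MR2220090}---and whose limit steps are colimits. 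Set $F_{k+1}$ to be the total colimit of this inner chain and $F' := \colim_k F_k$.

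Since Galois cohomology commutes with filtered colimits of fields, any class in $H^{n+1}(F',\mu_p^{\otimes n})$ comes from some $\alpha \in H^{n+1}(F_k,\mu_p^{\otimes n})$; by the norm residue theorem \cite{MR2811603}, $\alpha$ is a finite sum of symbols in the paper's sense, and each summand is killed at the corresponding successor stage in the inner chain of $F_{k+1}$, so $\alpha$ vanishes in $F'$. For the Chow surjectivity, $\CH(Y_{F'})$ is the filtered colimit of $\CH(Y_E)$ as $E$ runs over the intermediate fields of the tower, so it suffices to check surjectivity in codimensions $< (p^n-1)/(p-1)$ at each atomic step $E \subset E(X)$, where $X$ is a standard norm variety of a nontrivial symbol over $E$. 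Such $X$ is $A$-trivial by Theorem \ref{chowmain}, hence Theorem \ref{mainZp} applied to the equidimensional $E$-variety $Z := Y_E$ yields exactly this surjectivity. Since compositions and filtered colimits of surjective maps of abelian groups are surjective, the homomorphism $\CH(Y) \to \CH(Y_{F'})$ is surjective in the required codimension range.

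The main technical points are the set-theoretic bookkeeping of the iterated transfinite construction and the appeal to the norm residue theorem to write an arbitrary class of $H^{n+1}$ as a finite sum of symbols; once these routine matters are dispatched the corollary follows by a direct assembly of Theorems \ref{chowmain} and \ref{mainZp}.
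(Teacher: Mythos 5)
Your proposal is correct and takes essentially the same route the paper intends: the paper offers no written proof of Corollary \ref{alaVishik} beyond the remark that ``a construction similar to Vishik's proof of his Theorem 3.4'' applies, and that construction is precisely the transfinite tower of function fields of standard norm varieties you build, with $A$-triviality from Theorem \ref{chowmain} and the surjectivity from Theorem \ref{mainZp} at each atomic step, assembled by filtered colimits. The only point worth flagging is that reducing an arbitrary class of $H^{n+1}(F_k,\mu_p^{\otimes n})$ to a finite sum of symbols in the paper's (mixed-twist) sense requires a small standard supplement to the norm residue theorem, but this is exactly the reduction the cited construction also relies on.
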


As indicated in \cite[Remark after Theorem 3.4]{MR2804267}, Corollary
\ref{alaVishik} shows that ``modulo $p$ and degree $>n$ cohomological invariants of
equidimensional algebraic varieties could not affect rationality of cycles of codimension
$<(p^n-1)/(p-1)$''.

\begin{thm}[Version for $\Lambda=\Z$]
\label{mainZ}
We have $\Lambda=\Z$ in this statement.
Let $F$ be a field of characteristic $0$.
Given an $A$-trivial $p$-generic splitting variety $X$  of a symbol
$s\in H^{n+1}(F,\mu_p^{\otimes n})$ such that $p\in\deg\CH_0(X)$,
the homomorphism $\CH(Y)\to\CH(Y_{F(X)})$
is surjective in codimensions $< (p^n-1)/(p-1)$
for any equidimensional variety $Y$.
It is also surjective in codimension $=(p^n-1)/(p-1)$ for a given $Y$ provided that
$s_{F(\zeta)}\ne0$
for each generic point $\zeta\in Y$.
\end{thm}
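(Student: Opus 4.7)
The plan is to deduce the integer-coefficient statement from the $\Z_{(p)}$-coefficient version (Theorem \ref{mainZp}) by combining its output with Corollary \ref{l} at $l=p$ via a B\'ezout argument. The hypothesis that $X$ is $A$-trivial with $\Lambda=\Z$ upgrades automatically to $A$-triviality with $\Lambda=\Z_{(p)}$ by Remark \ref{def-A-trivial}, so Theorem \ref{mainZp} is applicable; and the additional assumption $p\in\deg\CH_0(X)$ is precisely what is needed to feed Corollary \ref{l} with the integer $l=p$.

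Let $\alpha\in\CH^i(Y_{F(X)})$ with $i$ in the allowed range. First I would consider the image of $\alpha$ in the $p$-local Chow group $\CH^i(Y_{F(X)})\otimes\Z_{(p)}$ and lift it through Theorem \ref{mainZp}; clearing a prime-to-$p$ denominator then produces an integer $N$ prime to $p$ and a class $\gamma'\in\CH^i(Y)$ with $\gamma'_{F(X)}=N\alpha$ in the integer Chow group $\CH^i(Y_{F(X)})$.

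Next I would invoke Corollary \ref{l} with $l=p$ to produce $\delta\in\CH^i(Y)$ with $\delta_{F(X)}=p\alpha$. Since $\gcd(N,p)=1$, B\'ezout furnishes integers $a,b$ with $aN+bp=1$, and then $a\gamma'+b\delta\in\CH^i(Y)$ pulls back to $\alpha$, proving surjectivity. The codimension-equal-$(p^n-1)/(p-1)$ case under the generic-point hypothesis on $s$ runs identically, using the second half of Theorem \ref{mainZp} in place of the first.

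I do not expect any essential obstacle beyond bookkeeping the identification $\CH^i(\,\cdot\,;\Z_{(p)})=\CH^i(\,\cdot\,)\otimes\Z_{(p)}$ used to pass from a $p$-local equality $\gamma_{F(X)}=m\alpha$ (with $m$ prime to $p$) to the honest integer equality $(k\gamma)_{F(X)}=km\alpha$ after clearing a further prime-to-$p$ factor $k$. The conceptual content is entirely imported from Theorem \ref{mainZp} and Corollary \ref{l}; the hypothesis $p\in\deg\CH_0(X)$ is what allows us to bridge the gap between $\Z_{(p)}$-rationality and $\Z$-rationality by supplying the missing \textbf{prime $p$} component.
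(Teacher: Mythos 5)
Your proof is correct and follows essentially the same route as the paper: combine Theorem \ref{mainZp} with Corollary \ref{l} applied to $l=p$. The paper makes the deduction in one line by invoking the $\Lambda=\F_p$ version of Theorem \ref{mainZp} (so the image plus $p\CH(Y_{F(X)})$ is everything, and the image already contains $p\CH(Y_{F(X)})$), which avoids your denominator-clearing and B\'ezout bookkeeping, but that bookkeeping is handled correctly in your version.
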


\begin{proof}
By Theorem \ref{mainZp}, $\CH(Y)\to\CH(Y_{F(X)})$ is
surjective modulo $p$ in the codimensions considered.
Since $p\in\deg\CH_0(X)$, the image of
$\CH(Y)\to\CH(Y_{F(X)})$ contains $p\CH(Y_{F(X)})$ by Corollary \ref{l}.
\end{proof}

\begin{example}
For $p=2$, let $a_0,\dots,a_n\in F^\times$, $s$ the symbol
$s:=(a_0)\cup\dots\cup(a_n)$, and $X$ the norm quadric
$\<-a_0\>\bot\ll a_1,\dots,a_n\rr=0$.
Then $X$ is an $A$-trivial (see Example \ref{PHV}) norm variety of $s$ with $2\in\deg\CH_0(X)$ so
that Theorem \ref{mainZ} applies.
The result obtained has been originally established by A. Vishik in \cite[Corollary 3.3]{MR2804267}.
Using \cite{MR2015051} in place of Appendix \ref{Chow groups of Rost
motives},
the characteristic $0$ assumption can be replaced by characteristic $\ne2$
assumption in this statement.
\end{example}

\begin{example}
Let $\Char F=0$, $p=3$, and
let $X$ be any (among $15$) nontrivial projective homogeneous $F$-variety under an action of a
given absolutely simple affine algebraic group of type $F_4$ over $F$.
Then the conclusion of Theorem \ref{mainZp} holds for $X$ with $n=2$.
Indeed, $X$ is $A$-trivial (for any coefficient ring) by Example \ref{PHV}.
The modulo $3$ portion of the {\em Rost invariant} (see \cite{MR1999385})
provides us with an element $s\in H^3(F,\mu_3^{\otimes 2})$.
This element is a symbol by \cite[p. 303]{MR1691951} (see also \cite[p.
21]{MR2528487}).
Moreover, $X$ is a $3$-generic splitting variety of $s$ (see \cite[\S15.5]{MR2528487}),
so that Theorem \ref{mainZp} applies.
The result obtained is
an enhancement (in several respects) of
\cite[Case $p=3$ of Corollary 1.4)]{MR2503240}.

If $F$ has no finite extensions of degree prime to $3$, we have
$3\in\deg\CH_0(X)$ for $\Lambda=\Z$, and Theorem \ref{mainZ} applies.
\end{example}

}

{
%renew

\renewcommand{\(}{\bigl(}
\renewcommand{\)}{\bigr)}

%simple

\newcommand{\m}{^{\times}}
\newcommand{\ff}{F^{\times}}
\newcommand{\fs}{F^{\times 2}}
\newcommand{\llg}{\longrightarrow}
\newcommand{\tens}{\otimes}
\newcommand{\inv}{^{-1}}

%stackrel

\newcommand{\Dfn}{\stackrel{\mathrm{def}}{=}}
\newcommand{\iso}{\stackrel{\sim}{\to}}

%mathrm

\newcommand{\mult}{\mathrm{mult}}
\newcommand{\sep}{\mathrm{sep}}
\newcommand{\id}{\mathrm{id}}
\newcommand{\diag}{\mathrm{diag}}
\newcommand{\op}{^{\mathrm{op}}}

%operator

\newcommand{\ihom}{\mathcal{H}om}
\newcommand{\CH}{\operatorname{CH}}
\newcommand{\F}{\operatorname{F}}
\renewcommand{\Im}{\operatorname{Im}}
\newcommand{\Ad}{\operatorname{Ad}}
\newcommand{\NN}{\operatorname{N}}
\newcommand{\Ker}{\operatorname{Ker}}
\newcommand{\Pic}{\operatorname{Pic}}
\newcommand{\Tor}{\operatorname{Tor}}
\newcommand{\Lie}{\operatorname{Lie}}
\newcommand{\ind}{\operatorname{ind}}
\newcommand{\ch}{\operatorname{char}}
\newcommand{\Inv}{\operatorname{Inv}}
\newcommand{\coker}{\operatorname{Coker}}
\newcommand{\res}{\operatorname{res}}
\newcommand{\Br}{\operatorname{Br}}
\newcommand{\Spec}{\operatorname{Spec}}
\newcommand{\SK}{\operatorname{SK}}
\newcommand{\Gal}{\operatorname{Gal}}
\newcommand{\SL}{\operatorname{SL}}
\newcommand{\GL}{\operatorname{GL}}
\newcommand{\gSL}{\operatorname{\mathbf{SL}}}
\newcommand{\DM}{\operatorname{\mathbf{DM}}}
\newcommand{\gO}{\operatorname{\mathbf{O}}}
\newcommand{\gGL}{\operatorname{\mathbf{GL}}}
\newcommand{\gPGU}{\operatorname{\mathbf{PGU}}}
\newcommand{\gSpin}{\operatorname{\mathbf{Spin}}}
\newcommand{\End}{\operatorname{End}}
\newcommand{\Hom}{\operatorname{Hom}}
\newcommand{\Aut}{\operatorname{Aut}}
\newcommand{\Mor}{\operatorname{Mor}}
\newcommand{\Map}{\operatorname{Map}}
\newcommand{\Ext}{\operatorname{Ext}}
\newcommand{\Nrd}{\operatorname{Nrd}}
\newcommand{\ord}{\operatorname{ord}}
\newcommand{\ra}{\rightarrow}
\newcommand{\xra}{\xrightarrow}

%bb

\newcommand{\A}{\mathbb{A}}
\renewcommand{\P}{\mathbb{P}}
\newcommand{\p}{\mathbb{P}}
\newcommand{\Z}{\mathbb{Z}}
\newcommand{\z}{\mathbb{Z}}
\newcommand{\N}{\mathbb{N}}
\newcommand{\Q}{\mathbb{Q}}
\newcommand{\QZ}{\mathop{\mathbb{Q}/\mathbb{Z}}}
\newcommand{\gm}{\mathbb{G}_m}
\newcommand{\hh}{\mathbb{H}}

%cal
\newcommand{\cA}{\mathcal A}
\newcommand{\cB}{\mathcal B}
\newcommand{\cF}{\mathcal F}
\renewcommand{\cH}{\mathcal H}
\newcommand{\cJ}{\mathcal J}
\newcommand{\cM}{\mathcal M}
\newcommand{\cN}{\mathcal N}
\newcommand{\cO}{\mathcal O}
\renewcommand{\cR}{\mathcal R}
\newcommand{\cS}{\mathcal S}
\newcommand{\cX}{\mathcal X}
\newcommand{\cU}{\mathcal U}
\renewcommand{\cL}{\mathcal L}

%frak

\newcommand{\falg}{F\mbox{-}\mathfrak{alg}}
\newcommand{\fgroups}{F\mbox{-}\mathfrak{groups}}
\newcommand{\fields}{F\mbox{-}\mathfrak{fields}}
\newcommand{\groups}{\mathfrak{Groups}}
\newcommand{\abelian}{\mathfrak{Ab}}

\section
{$A$-triviality of standard norm varieties}
\label
{section A-triviality of standard norm varieties}

\subsection
{Retract rational varieties}

A variety $X$ over $F$ is called \emph{retract rational} if there
exist rational morphisms $\alpha:X\dashrightarrow \P^n$ and
$\beta:\P^n\dashrightarrow X$ for some $n$ such that the composition
$\beta\circ \alpha$ is defined and is equal to the identity of $X$.

The following proposition is due to D.~Saltman.

\begin{prop}
\label{retract}
Let $A$ be a central simple algebra of prime degree. Then the variety
of the algebraic group $\gSL_1(A)$ is retract rational.
\end{prop}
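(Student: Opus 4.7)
The plan is to construct rational maps $\alpha\colon \gSL_1(A)\dashrightarrow U$ and $\beta\colon U\dashrightarrow \gSL_1(A)$, with $U$ an open subvariety of an affine space, such that $\beta\circ\alpha$ is defined and equals the identity of $\gSL_1(A)$.

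The starting point is that $\gGL_1(A)$ is the open complement in $A\simeq \A^{p^2}$ of the reduced norm hypersurface, hence is a rational $F$-variety, and it fits into the exact sequence of algebraic groups
\[
1\to \gSL_1(A)\to \gGL_1(A)\xrightarrow{\Nrd}\gm\to 1,
\]
realizing $\gGL_1(A)$ as an $\gSL_1(A)$-torsor over $\gm$. A rational section of $\Nrd$ would already yield stable rationality of $\gSL_1(A)$; for retract rationality it suffices to produce the weaker retract datum above.

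The structural input that uses primality of $p=\deg A$ is that any element $a\in A\setminus F$ generates a subfield $F(a)$ of degree exactly $p$, which is therefore a maximal subfield of $A$ and in particular splits $A$. Consequently, on the open dense subvariety $Z\subset A$ of elements whose reduced characteristic polynomial is irreducible over $F$, the tautological degree-$p$ cover generically splits $A$, so over this cover the group $\gSL_1(A)$ becomes isomorphic to $\SL_p$, a rational variety. I would use this to equip $\gSL_1(A)$ with a birational parametrization by an affine space after a degree-$p$ base change, for instance by combining the conjugation rational map $(g,x)\mapsto(gxg^{-1},g)$ with the rational parametrization of $\SL_p$ over $F(a)$, and then chasing back along a chosen rational section.

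The chief obstacle is the descent step: although $\gSL_1(A)_{F(a)}\simeq \SL_p$ is rational, producing an $F$-rational retract out of these $F(a)$-rational parametrizations requires that the associated Galois-cohomological obstruction, attached to the extension $F(a)/F$, be inverted rationally on a suitable open subset of $\gSL_1(A)$. Primality of $p$ enters decisively here: the absence of proper intermediate subfields of $F(a)/F$, together with a Hilbert~90 type argument and the surjectivity of the reduced norm on residue fields of sufficiently many closed points of $A$, is what allows the pointwise splittings to be assembled into a single $F$-rational retract datum, furnishing the required maps $\alpha$ and $\beta$.
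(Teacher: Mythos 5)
Your proposal does not reach a proof: the step you yourself flag as ``the chief obstacle'' --- descending the rational parametrizations of $\mathrm{SL}_1(A)$ over the splitting fields $F(a)$ to an $F$-rational retract datum --- is exactly where all the content lies, and the mechanism you invoke for it (absence of intermediate subfields of $F(a)/F$, a Hilbert~90 type argument, surjectivity of reduced norms on residue fields) is never shown to produce the maps $\alpha$ and $\beta$. Note that for \emph{any} central simple algebra $A$, of prime degree or not, the group $\mathrm{SL}_1(A)$ becomes isomorphic to $\mathrm{SL}_{\deg A}$ (hence rational) over any splitting field; yet by Platonov's examples $\mathrm{SL}_1(A)$ can fail to be retract rational when $\deg A$ is not prime, since retract rationality forces $\mathrm{SK}_1(A_K)=1$ for every extension $K/F$. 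So ``generic splitting plus descent'' cannot work as stated: whatever replaces your descent step must use primality in a way that your sketch does not identify, and the obstruction you are trying to invert is precisely $\mathrm{SK}_1$.

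The paper's proof (due to Saltman) pinpoints the correct input: by Wang's theorem, $\mathrm{SK}_1(A_K)=1$ for every field extension $K/F$ because $\deg A$ is prime. Applying this to $K=F(G)$ with $G=\mathrm{SL}_1(A)$, the generic point $\xi\in G(K)$ is a product of commutators $[f_1,f_1']\cdots[f_n,f_n']$ in $(A_K)^{\times}$. The $2n$-tuple $(f_1,f_1',\dots,f_n,f_n')$ of rational functions is a rational map $\alpha\colon G\dashrightarrow \mathbb{A}(A^{2n})$, and the commutator product defines $\beta\colon \mathbb{A}(A^{2n})\dashrightarrow G$ with $\beta\circ\alpha=\mathrm{id}_G$; this is the retract datum. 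To salvage your approach you would have to show that your descent step yields (or presupposes) the vanishing of $\mathrm{SK}_1$ over all extensions, at which point the direct argument above is both shorter and complete.
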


\begin{proof}
As $\deg(A)$ is prime, the group $SK_1(A_K)$ is trivial for every field extension $K/F$
\cite[\S23, Corollary 4]{MR696937}.
Taking $K=F(G)$, we can write the generic point $\xi\in \gSL_1(A)(K)$ as product
of commutators
\[
\xi=[f_1,f'_1]\cdot [f_2,f'_2]\cdot \ldots \cdot [f_n,f'_n]
\]
in $(A_K)\m$, where $f_i,g_i\in (A_K)\m$. The $2n$-tuple of functions $(f_1,f'_1,\dots, f_n,f'_n)$ can be viewed
as a rational morphism $\alpha$ from $G$ to the affine space $\A(A^{2n})$ of the direct sum of $2n$ copies of
the vector space of the algebra $A$. Define the rational morphism $\beta: \A(A^{2n})\dashrightarrow G$ by
\[
\beta(a_1,a'_1,\dots, a_n,a'_n)=[a_1,a'_1]\cdot\ldots\cdot [a_n,a'_n].
\]
By construction, the composition
$\beta\circ \alpha$ is defined and is equal to the identity of $G$.
\end{proof}

\subsection
{$A$-trivial varieties revisited}

Recall that a smooth complete variety $X$ over $F$ is called {\em $A$-trivial} if for every
field extension $K/F$ such that $X(K)\neq\emptyset$,
the degree homomorphism $\CH_0(X_K)\to \Lambda$ is an isomorphism.

\begin{example}
\label{rrA}
A retract rational smooth complete variety $X$ is $A$-trivial.
Indeed, it suffices to prove this for $\Lambda=\Z$.
Let $\alpha:X\dashrightarrow \P^n$ and
$\beta:\P^n\dashrightarrow X$ be rational morphisms such that the composition
$\beta\circ \alpha$ is defined and is equal to the identity of $X$, then the composition
(see Appendix \ref{Rational correspondences})
\[
\CH_0(X_K)\xra{\alpha_*} \CH_0(\P_K^n)\xra{\beta_*} \CH_0(X_K)
\]
is the identity for any field extension $K/F$.
As $\CH_0(\P_K^n)=\Z$, $\alpha_*$ is an isomorphism that is equal to the degree map.
\end{example}

\begin{prop}\label{a-trivial}
A smooth complete variety $X$ over $F$ is $A$-trivial if and only if for every
field extension $K/F$ and every two points $x,x'\in X(K)$, we have $[x]=[x']$ in $\CH_0(X_K)$.
\end{prop}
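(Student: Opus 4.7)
The forward direction is immediate: if $X$ is $A$-trivial and $x,x' \in X(K)$, then $[x]$ and $[x']$ both have degree $1$ in $\CH_0(X_K)$, and injectivity of $\deg$ (which is part of $A$-triviality) forces $[x]=[x']$.

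The substance is the converse. The plan is to fix a field extension $K/F$ with $X(K)\ne\emptyset$, pick a distinguished rational point $x_0 \in X(K)$, and show that every closed point class $[x] \in \CH_0(X_K)$ equals $\deg(x)\cdot[x_0]$. By $\Lambda$-linearity this would give $\alpha = \deg(\alpha)\cdot[x_0]$ for every $\alpha \in \CH_0(X_K)$, whence the degree map is simultaneously surjective (because $\deg[x_0]=1$) and injective.

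The key step is the handling of a single closed point $x$ of $X_K$. Let $L = K(x)$ be its residue field. The inclusion $\Spec L \hookrightarrow X_K$ lifts, by the universal property of the base change $X_L = X_K \times_{\Spec K} \Spec L$, to an $L$-point $\tilde x \in X(L)$, which I view as a closed point of $X_L$ with residue field $L$. The chosen rational point $x_0$ similarly yields an $L$-point $(x_0)_L \in X(L)$. Applying the hypothesis to the field extension $L/F$ gives $[\tilde x] = [(x_0)_L]$ in $\CH_0(X_L)$. I then push this equality forward along the finite flat projection $\iota \colon X_L \to X_K$: a short residue-field computation gives $\iota_*[\tilde x] = [x]$ (because $\iota(\tilde x) = x$ and the relative residue field extension is trivial) and $\iota_*[(x_0)_L] = [L:K]\cdot[x_0] = \deg(x)\cdot[x_0]$. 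Combining yields $[x] = \deg(x)\cdot[x_0]$, which is what I want.

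The argument uses no deep input beyond the proper pushforward on Chow groups; the only thing to be careful about is tracking residue-field indices in the two pushforward calculations, and that is the mild obstacle. No Brauer--Severi, specialization, or rational-correspondence machinery from the later sections is needed, so this proposition can sit at the very start of the section as a convenient reformulation of $A$-triviality.
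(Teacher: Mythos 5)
Your proposal is correct and follows essentially the same route as the paper: the forward direction via degrees, and the converse by passing to the residue field $L=K(x)$, lifting $x$ to a rational point of $X_L$, applying the hypothesis there, and pushing forward along $X_L\to X_K$ to get $[x]=\deg(x)\cdot[x_0]$. The only cosmetic difference is that you spell out the residue-field bookkeeping in the two pushforward computations, which the paper leaves implicit.
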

\begin{proof}
$\Rightarrow$: As $\deg[x]=1=\deg[x']$, we have $[x]=[x']$ in $\CH_0(X_K)$.

\noindent $\Leftarrow$: Let $K/F$ be a field extension such that $X(K)\neq\emptyset$. Let $x\in X_K$ be a rational point
and $y\in X_K$ a closed point of degree $m$. It suffices to show that $[y]=m[x]$ in $\CH_0(X_K)$. Let $L=K(y)$ and $y'$
a rational point of $X_L$ over $y$. By assumption $[y']=[x_L]$ in $\CH_0(X_L)$. Applying the push-forward homomorphism
$\CH_0(X_L)\to \CH_0(X_K)$, we get $[y]=m[x]$.
\end{proof}

\begin{proposition}\label{a-trivial2}
\noindent {\rm (1)} If $X$ is an $A$-trivial variety over $F$, then so is $X_K$ for any field extension $K/F$.

\noindent {\rm (2)} If $X$ and $X'$ are $A$-trivial varieties over $F$, then so is $X\times X'$.

\noindent {\rm (3)} Let $E/F$ be a separable field extension and $Y$ a variety over $E$.
If $Y$ is $A$-trivial, then so is the Weil transfer $R_{E/F}(Y)$.
\end{proposition}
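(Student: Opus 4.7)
The plan is to use Proposition \ref{a-trivial} as the working criterion throughout: a smooth complete variety is $A$-trivial if and only if, for every field extension of the base and any two of its rational points, their classes in $\CH_0$ agree. Part (1) follows immediately, since an $M$-point of $X_K$ (for any extension $M/K$) is the same as an $M$-point of $X$ with $M/F$ obtained via $F\to K\to M$, and $\CH_0((X_K)_M)=\CH_0(X_M)$; the $A$-triviality of $X$ over $F$ then forces any two $M$-points of $X_K$ to have equal class in $\CH_0$.

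For part (2), let $M/F$ be an extension and take two $M$-points $(x_0,x_0'),(x_1,x_1')\in(X\times X')(M)$. The closed immersion $X_M\hookrightarrow(X\times X')_M$ sending $x$ to $(x,x_0')$ sends $[x_0]\mapsto[(x_0,x_0')]$ and $[x_1]\mapsto[(x_1,x_0')]$; by $A$-triviality of $X$, $[x_0]=[x_1]$ in $\CH_0(X_M)$, so the images agree. The analogous argument on the second factor gives $[(x_1,x_0')]=[(x_1,x_1')]$, and transitivity concludes the case.

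Part (3) is the main obstacle. Once more verifying the criterion of Proposition \ref{a-trivial}, fix a field extension $K/F$ and two $K$-points $p_0,p_1$ of $R_{E/F}(Y)$. Since $E/F$ is separable, $K\otimes_F E$ is a finite \'etale $K$-algebra and decomposes as $\prod_{i=1}^r L_i$ with $L_i/K$ finite separable; correspondingly
\[
R_{E/F}(Y)_K\;\cong\;\prod_{i=1}^r R_{L_i/K}(Y_{L_i}),
\]
and each $p_j$ is a tuple $(p_{j,i})_{i=1}^r$ with $p_{j,i}\in R_{L_i/K}(Y_{L_i})(K)$ corresponding under the Weil restriction adjunction to $z_{j,i}\in Y(L_i)$. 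Part (1) makes each $Y_{L_i}$ an $A$-trivial $L_i$-variety, and iterating the coordinate-swap argument of part (2) reduces the problem to the following \emph{key lemma}: for $L/K$ a finite separable field extension and $Z$ an $A$-trivial $L$-variety with $Z(L)\neq\emptyset$, any two $K$-points of $R_{L/K}(Z)$, corresponding to $z_0,z_1\in Z(L)$, satisfy $[p_0]=[p_1]$ in $\CH_0(R_{L/K}(Z))$.

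The key lemma is the crux of the argument. Since $Z$ is $A$-trivial with $Z(L)\neq\emptyset$, $[z_0]=[z_1]$ in $\CH_0(Z)$, and the task is to transport this rational equivalence from $Z/L$ to $R_{L/K}(Z)/K$. The natural tool is the Weil restriction adjunction, which converts an $L$-morphism $T_L\to Z$ (for any $K$-variety $T$) into a $K$-morphism $T\to R_{L/K}(Z)$, pushing $0$-cycles on $T$ to $0$-cycles on $R_{L/K}(Z)$. The hard part will be that the curves $C\subset Z$ witnessing the rational equivalence $[z_0]=[z_1]$ in $\CH_0(Z)$ are defined over $L$ and typically not of the form $T_L$ for a $K$-curve $T$, so rational equivalences on $Z$ do not lift directly. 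A plausible route is to pass to the Galois closure $L'/K$, use the decomposition $R_{L/K}(Z)_{L'}\cong\prod_{\sigma\colon L\hookrightarrow L'}Z^\sigma_{L'}$ with parts (1) and (2) to conclude that $R_{L/K}(Z)_{L'}$ is $A$-trivial over $L'$, and then descend to $K$ through a transfer argument that exploits the Weil restriction structure to avoid losing the factor $[L':K]$; this descent step is where the main technical difficulty lies.
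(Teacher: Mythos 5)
Your parts (1) and (2) are correct and essentially coincide with the paper's argument: the paper computes $[y_1]=[x_1]\times[x'_1]=[x_2]\times[x'_2]=[y_2]$ using external products, while you push forward along the two coordinate inclusions; these are the same computation. Your reduction in part (3) via the decomposition $R_{E/F}(Y)_K\cong\prod_i R_{E_i/K}(Y_{E_i})$ down to a single-factor statement is also exactly the paper's reduction.

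However, the key lemma you isolate is precisely where your proof stops, and the route you sketch does not close the gap. Passing to the Galois closure $L'/K$ and using $R_{L/K}(Z)_{L'}\cong\prod_{\sigma}Z^{\sigma}_{L'}$ together with parts (1) and (2) does give $[p_0]_{L'}=[p_1]_{L'}$, but the only map back to $\mathrm{CH}_0(R_{L/K}(Z))$ available from that is the push-forward, which yields only $[L':K]\cdot([p_0]-[p_1])=0$; since $[L':K]$ need not be invertible in $\Lambda$ (in the intended application $\Lambda=\Z_{(p)}$ and the relevant degrees are $p$-powers), this loses exactly the factor you say you want to avoid, and you offer no mechanism for avoiding it. The paper's proof supplies the missing tool: the Weil transfer of algebraic cycles (\cite{MR1809664}) induces a canonical map $\mathrm{CH}_0(Y_{E_i})\to\mathrm{CH}_0\bigl(R_{E_i/K}(Y_{E_i})\bigr)$ which is well defined on rational equivalence classes and sends the class of an $E_i$-rational point of $Y_{E_i}$ to the class of the corresponding $K$-rational point of the Weil transfer. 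Applying this map to the equality $[y_i]=[y'_i]$ in $\mathrm{CH}_0(Y_{E_i})$ (which holds by $A$-triviality of $Y$ and your part (1)) immediately yields your key lemma, with no Galois closure or transfer needed. Without this ingredient, or an explicit substitute for it, part (3) remains unproved.
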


\begin{proof}
(1) is trivial.

(2) Let $K/F$ be a field extension and $y_1, y_2\in (X\times X')(K)$. We have $y_1=(x_1,x'_1)$ and $y_2=(x_2,x'_2)$ for
$x_1,x_2\in X(K)$, $x_1',x_2'\in X'(K)$. As $X$ and $X'$ are $A$-trivial, we have
$[x_1]=[x_2]$ in $\CH_0(X_K)$, $[x'_1]=[x'_2]$ in $\CH_0(X'_K)$ and hence
\[
[y_1]=[x_1]\times [x'_1]=[x_2]\times [x'_2]=[y_2]
\]
in $\CH_0(X\times X')_K$.

(3) Let $K/F$ be a field extension and write $E\tens_F K\simeq E_1\times\dots\times E_s$, where $E_i$ are field extensions of $K$.
We have $R_{E/F}(Y)_K=R_{E\tens_F K/K}(Y_{E\tens_F K})=\prod R_{E_i/K}(Y_{E_i})$.Write two points
$y,y'\in R_{E/F}(Y)(K)=Y(E\tens_F K)=\prod Y(E_i)$ in the form $y=(y_1,\dots, y_s)$ and $y'=(y'_1,\dots, y'_s)$, where $y_i,y'_i\in Y(E_i)=R_{E_i/K}(Y_{E_i})(K)$.
By assumption, $[y_i]=[y'_i]$ in $\CH_0(Y_{E_i})$ for all $i$. Applying  the canonical maps
\[
\CH_0(Y_{E_i})\to \CH_0\(R_{E_i/K}(Y_{E_i})\)
\]
(see \cite{MR1809664}) we get $[y_i]=[y'_i]$ in $\CH_0(R_{E_i/K}(Y_{E_i}))$ for all $i$.
It follows that $[y]=[y']$ in $\CH_0(R_{E/F}(Y)_K)$.
\end{proof}

\subsection{Symmetric powers}

Let $p$ be an integer and let $Y$ be a quasi-projective variety over $F$.
Write $S^p(Y)$ for the symmetric $p$th power of $Y$, i.e.,
the factor variety of $Y^p$ by the natural action of the symmetric group $S_p$. A $K$-point of $S^pY$
is an effective $0$-cycle on $Y_K$ of degree $p$.

Write
$\widetilde{Y}^p$ for $Y^p$ with all the diagonals removed and write
$\widetilde S^p(Y)$ for the factor variety $\widetilde{Y}^p/S_p$.
If $Y$ is smooth, then so is $\widetilde S^p(Y)$. Every $F$-point $z$ of
$\widetilde S^p(Y)$ gives rise to an effective $0$-cycle $y_1+y_2+\dots + y_k$ on $Y$ of degree $p$ with distinct
closed points $y_1,y_2,\dots, y_k$ on $Y$. We will write $F\{z\}$ for the $F$-algebra
$F(y_1)\times F(y_2)\times   \dots\times F(y_k)$ of dimension $p$.

Consider the natural morphism $g:Y\times S^{p-1}(Y)\to S^p(Y)$. By \cite[\S2]{MR2220090}, the sheaf
$\cL=g_*(\cO_{Y\times S^{p-1}(Y)})|_{\widetilde S^p(Y)}$ is a locally free $\cO_{\widetilde S^p(Y)}$-algebra of
rank $p$.
The sheaf $\cL$ determines a rank $p$ vector bundle $J$ over
$\widetilde S^p(Y)$ such that the fiber of $J$ over a point $z$ in $\widetilde S^p(Y)$ is the vector space
of the algebra $F\{z\}$.

Consider the pull-back diagram
\[
\begin{CD}
\widetilde Y^p\coprod \dots\coprod \widetilde Y^p @>>> \widetilde Y^p  \\
@VVV  @VhVV     \\
g\inv(\widetilde S^p(Y))   @>>> \widetilde S^p(Y),      \\
\end{CD}
\]
where the left top corner is disjoint union of $p$ copies of $\widetilde Y^p$, the upper map is the
identity on each copy and the left vertical morphism on the $i$th copy takes a point $(y_1,\dots, y_p)$ to
$(y_i,y_1+\dots +\widehat y_i+\dots +y_p)$. It follows that the sheaf $h^*(\cL)$ of modules on $\widetilde Y^p$ is free.
Therefore, the vector bundle $h^*(J)$ on $\widetilde Y^p$ is trivial.

Let $E$ be an \'etale $F$-algebra of dimension $p$. Consider the natural morphism $f:R_{E/F}(Y_E)\to S^p(Y)$
(see \cite[pp. 267--268]{MR1611822}).
The map $f$ takes a point $y$ in $R_{E/F}(Y_E)(F)=Y(E)$ to the $0$-cycle $y_1+\dots+y_p$, where $y_i$ are the images of $y$ under
all the embeddings of $E$ into $F_\sep$.
If $E$ is split, $f$ is the natural morphism $Y^p\to S^p(Y)$. In general, $f$ can be obtained via the twist of this morphism by the
$S_p$-torsor corresponding to the \'etale algebra $E$. Moreover, the \'etale group scheme $G=\Aut(E/F)$ (the twisted form of the symmetric group $S_p$
by the same $S_p$-torsor) acts naturally on $R_{E/F}(Y_E)$ and $f$ identifies $S^p(Y)$ with the factor variety of $R_{E/F}(Y_E)$ by $G$.

Write $\widetilde R_{E/F}(Y_E)$ for the preimage $f\inv\(\widetilde S^p(Y)\)$ and $\tilde f$ for the
morphism $\widetilde R_{E/F}(Y_E)\to \widetilde S^p(Y)$.
\begin{lemma}\label{lift}
If $z$ is a rational point of $\widetilde S^p(Y)$ and $E=F\{z\}$, then $z$ lifts
to a rational point in $\widetilde R_{E/F}(Y_E)$.
\end{lemma}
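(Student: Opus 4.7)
The plan is to produce the lift tautologically from the combinatorial data encoded in $z$ and then verify, by unwinding the definition of $f$ on $F$-points, that the constructed point maps to $z$.

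First I unpack the data of $z$. By definition of $\widetilde S^p(Y)$, the rational point $z$ corresponds to an effective $0$-cycle $y_1+\cdots+y_k$ of degree $p$ on $Y$ with pairwise distinct closed points $y_i$, so that $E=F\{z\}=\prod_{i=1}^k F(y_i)$ is étale of dimension $p$. I then use the identification
\[
R_{E/F}(Y_E)(F)=Y(E)=\prod_{i=1}^k Y(F(y_i))
\]
and take the tautological point $y=(\tilde y_1,\ldots,\tilde y_k)$, where $\tilde y_i\in Y(F(y_i))$ is the structure morphism $\Spec F(y_i)\to Y$ associated with the closed point $y_i$.

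The remaining step is to verify $f(y)=z$ in $S^p(Y)$. Using the description of $f$ on $F$-points recalled in the excerpt, which sends a point in $Y(E)$ to the sum of its images under all $p$ embeddings $E\hookrightarrow F_\sep$, these embeddings split according to the product decomposition of $E$: any embedding of $E$ into the field $F_\sep$ factors through exactly one projection $E\to F(y_i)$. Summing the $[F(y_i):F]$ Galois conjugates of $\tilde y_i$ recovers the closed point $y_i$ viewed as a $0$-cycle of degree $[F(y_i):F]$; adding over $i$ yields $\sum_i y_i=z$. Hence $y\in f^{-1}(\widetilde S^p(Y))=\widetilde R_{E/F}(Y_E)$, providing the desired lift.

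I do not foresee a genuine obstacle: the lemma amounts to chasing definitions. The most delicate point is the treatment of the non-split case, where $R_{E/F}(Y_E)$ is the twist of $Y^p$ by the $S_p$-torsor coming from $E$; but the prescription of $f$ on $F$-points via summing over embeddings of $E$ into $F_\sep$ works uniformly, so this twisting does not interfere with the verification.
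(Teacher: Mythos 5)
Your proof is correct and follows essentially the same route as the paper's: both produce the tautological point of $Y(E)=R_{E/F}(Y_E)(F)$ determined by the closed points underlying $z$ and check, via the description of $f$ as summing over the embeddings of $E$ into $F_{\mathrm{sep}}$, that it maps to $z$. The only cosmetic difference is that the paper first reduces to the case where $E$ is a field, whereas you handle the product decomposition of the \'etale algebra directly.
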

\begin{proof}
We may assume that $E$ is a field. Over $F_\sep$, $z$ is the cycle $y_1+\dots + y_p$, where the distinct points $y\in Y(F_\sep)$
are permuted transitively by the Galois group of $F_\sep/F$. Choose a point $y\in Y(E)=\widetilde R_{E/F}(Y_E)$ and an embedding of $E$ into $F_\sep$
such that the map $Y(E)\to Y(F_\sep)$ takes $y$ to $y_1$. Then $\tilde f(y)=y_1+\dots +y_p=z$.
\end{proof}

Twisting the diagram above by the $S_p$-torsor $E$, we see that
the pull-back $\tilde f^*(J)$ is a trivial vector bundle over $\widetilde R_{E/F}(Y_E)$
with the fiber $E$, i.e., we have the following fiber product diagram:
\[
\begin{CD}
\widetilde R_{E/F}(Y_E)\times E   @>>> \widetilde R_{E/F}(Y_E)  \\
@VVV  @V{\tilde f}VV     \\
J   @>>> \widetilde S^p(Y).      \\
\end{CD}
\]

\subsection
{$A$-triviality of standard norm varieties}
\label{Subsection $A$-triviality of standard norm varieties}

We need $\ch F=0$ here.
We recall the construction of certain norm varieties given in \cite[\S2]{MR2220090}.

Let $p$ be a prime integer, $Y$ a variety over $F$ and $a\in F\m$.
Recall that we have a vector bundle $J$ of rank $p$ over $\widetilde S^p(Y)$.
Write $V(Y,a)$ for the hypersurface in $J$ defined in the fiber over every point $z$ of $\widetilde S^p(Y)$ by the equation $N=a$, where $N$ is the norm
map for the algebra $F\{z\}$.
If $Y$ is smooth geometrically irreducible variety,
then so is $V(Y,a)$ and $\dim V(Y,a)=p(\dim Y +1)-1$ by \cite[Lemma 2.1]{MR2220090}.

Let $E$ be an \'etale $F$-algebra of dimension $p$ and let $E^a$ be the hypersurface in the affine space $\A(E)$
given by the equation $N_{E/F}(x)=a$. We have the following fiber product diagram
\[
\begin{CD}
\widetilde R_{E/F}(Y_E)\times E^a   @>>> \widetilde R_{E/F}(Y_E)  \\
@VVV  @V{\tilde f}VV     \\
V(Y,a)   @>>> \widetilde S^p(Y).      \\
\end{CD}
\]
The \'etale group scheme $G=\Aut(E/F)$ acts naturally on the varieties in the top row
and the vertical morphisms are $G$-torsors.

Let $L$ be a cyclic \'etale $F$-algebra of degree $p$ and let $a_1,a_2,\dots, a_n$ (where $n\geq1$)
be a sequence of elements in $F\m$.
We define {\em a standard norm variety $W(L, a_1,a_2,\dots, a_n)$
for the sequence $(L,a_1,a_2,\dots, a_n)$} inductively as follows.
Let $W(L,a_1)$ be the Severi-Brauer variety for the cyclic algebra $(L/F,a_1)$ of degree $p$ and
for $n>1$ let
$W(L, a_1,a_2,\dots, a_n)$ be a smooth compactification of the smooth variety $V\(W(L,a_1,a_2,\dots, a_{n-1}), a_n\)$.
Note that $W(L,a_1,a_2,\dots, a_n)$ is a smooth projective geometrically irreducible variety over
$F$ of dimension $p^n-1$. Note that the birational class of a standard norm variety $W(L, a_1,a_2,\dots, a_n)$ is uniquely determined by the sequence
$(L,a_1,a_2,\dots, a_n)$.

Note that if $F$ contains a primitive $p$th root of unity $\xi_p$, we have $L=F(a_0^{1/p})$ for some $a_0\in F\m$ and
$W(L, a_1,a_2,\dots, a_n)=W(a_0, a_1,a_2,\dots, a_n)$ as defined in \cite[\S2]{MR2220090}.

For small $n$ one can identify the standard norm varieties varieties as follows
(see \cite{MR2714019} or \cite{Nguyen}):

\begin{example}\label{dva}
By definition, $W(L,a_1)$ is the Severi-Brauer variety for the cyclic algebra $A=(L/F,a_1)$.
If $W(L,a_1)_K$ has a point over a field extension $K/F$, it is isomorphic to the projective space
$\P_K^{p-1}$.
Hence it is rational over $K$ and therefore $W(L,a_1)$ is $A$-trivial.
\end{example}

\begin{example}\label{tri}
The variety $W(L,a_1,a_2)$ is birationally isomorphic to the $\gSL_1(A)$-torsor given by the equation $\Nrd=a_2$ in the cyclic algebra $A=(L/F,a_1)$,
where $\Nrd$ is the reduced norm map for $A$. If $W(L,a_1,a_2)$ has a rational point, it is birationally isomorphic
to the variety of the group  $\gSL_1(A)$ and hence is retract rational by Proposition \ref{retract}. It follows that $W(L,a_1,a_2)$
is $A$-trivial (see Example \ref{rrA}).
\end{example}

A sequence $(L,a_1,a_2,\dots,a_n)$ as above determines the {\em symbol}
\[
\{L,a_1,a_2,\dots,a_n\}:=(L)\cup (a_1)\cup\dots\cup (a_n)
\]
in $H^{n+1}(F,\mu_p^{\tens n})$, where $(L)\in H^1(F,\Z/p\Z)$ and $(a_i)\in H^1(F,\mu_p)$ are the
obvious classes.
By \cite{MR2220090}, the standard norm variety of the sequence $(L,a_1,a_2,\dots,a_n)$
is a norm variety of the symbol $\{L,a_1,a_2,\dots,a_n\}$.

\begin{theorem}
\label{chowmain}
Let $F$ be a field of characteristic $0$, $p$ a prime integer, $L/F$ a cyclic field extension of degree $p$ and $a_1,\dots, a_n\in F\m$.
If the symbol $\{L,a_1,a_2,\dots,a_n\}$ is nontrivial, then a standard norm variety
$X:=W(L,a_1,a_2,\dots, a_n)$
is $A$-trivial for the coefficient ring $\Lambda=\Z_{(p)}$.
Moreover, the degree map $\deg:\CH_0(X)\to \Z_{(p)}$ is injective.
\end{theorem}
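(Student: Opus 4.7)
The plan is to prove Theorem~\ref{chowmain} by induction on $n$. The base case $n=1$ handles $X=W(L,a_1)$, the Severi--Brauer variety of the cyclic division algebra $A=(L/F,a_1)$ of degree $p$. Classical computations of Chow groups of Severi--Brauer varieties give $\CH_0(X)=\Z_{(p)}\cdot[x_0]$ with $x_0$ a closed point of degree $p$, so $\deg$ is injective with image $p\Z_{(p)}$; and any $K/F$ with $X(K)\ne\emptyset$ splits $A$, making $X_K\cong\PP^{p-1}_K$ rational, whence $A$-triviality.

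For the inductive step, set $Y=W(L,a_1,\dots,a_{n-1})$. Whether the shorter symbol $\{L,a_1,\dots,a_{n-1}\}$ is trivial (in which case $Y$ turns out to be retract rational by iterating Examples~\ref{dva}--\ref{tri} together with Proposition~\ref{retract}) or nontrivial (in which case the inductive hypothesis of the theorem applies directly), we obtain that $Y$ is $A$-trivial. Since $X$ is a smooth compactification of $V=V(Y,a_n)$, and $\CH_0$ of smooth projective varieties is a birational invariant (via the rational-correspondence machinery of Appendix~\ref{Rational correspondences}), it suffices to argue for $V$. The geometric key is the fibre-product diagram of Section~4.3: for each étale $K$-algebra $E$ of dimension $p$,
$$
\widetilde{R}_{E/K}\bigl((Y_K)_E\bigr)\times_K E^{a_n}\longrightarrow V_K
$$
is an $\Aut(E/K)$-torsor onto its image. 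By Proposition~\ref{a-trivial2}(3), $R_{E/K}((Y_K)_E)$ is $A$-trivial. The norm-hypersurface $E^{a_n}$, once it carries a rational point, becomes a torsor under the norm-one torus of $E/K$, which is retract rational (trivially when $E$ is split, and by Proposition~\ref{retract} applied to the trivial cyclic algebra when $E$ is a cyclic field extension of degree $p$); so $E^{a_n}$ is $A$-trivial by Example~\ref{rrA}. Proposition~\ref{a-trivial2}(2) then gives $A$-triviality of the torsor total space.

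To deduce $A$-triviality of $V_K$ itself I would apply Proposition~\ref{a-trivial}: given $v,v'\in V(K)$, set $E_v=K\{z(v)\}$ and $E_{v'}=K\{z(v')\}$, and lift via Lemma~\ref{lift} to $K$-points of the corresponding $A$-trivial total spaces. The main obstacle is precisely this descent --- since $E_v$ and $E_{v'}$ need not coincide, the two lifts sit in different torsors and cannot be compared within a single cover. I would overcome this by combining three ingredients: the projection $V_K\to\widetilde{S}^p(Y_K)$ (whose fibres $E_z^{a_n}$ are $A$-trivial, by the preceding paragraph), an ``$A$-triviality up to prime-to-$p$ transfer'' statement for $\widetilde{S}^p(Y_K)$ extracted from Proposition~\ref{a-trivial2} via the étale cover $\widetilde{Y}^p\to\widetilde{S}^p(Y)$, and the rational-correspondence formalism of Appendix~\ref{Rational correspondences} to splice these into a chain of rational equivalences linking $v$ and $v'$ in $\CH_0(V_K)$.

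Finally, for injectivity of $\deg:\CH_0(X)\to\Z_{(p)}$ without assuming $X(F)\ne\emptyset$: $V$ carries a closed point of degree exactly $p$ --- take $E=L$ (the given cyclic degree-$p$ extension) and construct such a point by combining the torsor with a $Y$-point over a suitable extension of prime-to-$p$ degree. Combined with the $A$-triviality of $V$ already established, base-changed to the residue field of this degree-$p$ point, a push-forward/transfer argument in the spirit of Corollary~\ref{l} then forces $\CH_0(X)$ to be generated over $\Z_{(p)}$ by the single class of this point, giving injectivity of $\deg$.
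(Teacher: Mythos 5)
Your overall architecture --- induction on $n$, the base case via Severi--Brauer varieties, the fibration of $V(Y,a_n)$ over $\widetilde{S}^p(Y)$ with $A$-trivial Weil-transfer covers, and Propositions \ref{a-trivial} and \ref{a-trivial2} as the formal tools --- matches the paper, and you have correctly isolated the crux: two points $v,v'$ of $V(Y,a_n)$ lie over different \'etale algebras $E_v\ne E_{v'}$, so their lifts live in different torsors. But the step you propose for resolving this (``splicing'' the fibrewise $A$-triviality with an ``$A$-triviality up to prime-to-$p$ transfer'' for $\widetilde{S}^p(Y_K)$) is not a proof, and I do not see how to make it one: $\widetilde{S}^p(Y)$ is a non-complete quotient variety, no such statement follows from Proposition \ref{a-trivial2}, and even granting a rational equivalence between $z(v)$ and $z(v')$ downstairs, the fibres $E_z^{a_n}$ vary with $z$ (the algebra $E_z$ itself varies), so there is no evident way to lift such an equivalence to $V$. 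The paper's actual resolution is a specific arithmetic input you are missing: by \cite[Theorem 5.6]{MR2220090} one finds a single weight-two symbol $\{b_0,b_1\}$ dividing $\{L,a_1,\dots,a_{n-1}\}$ and split by \emph{both} fields $E_1$ and $E_2$; both points are then routed, via correspondences of prime-to-$p$ multiplicity built from a common model $Y''$ and Proposition \ref{trans}, into the single auxiliary norm variety $X'=W(b_0,b_1,a_n)$, which is $A$-trivial by Example \ref{tri} (Saltman's retract rationality of $\mathbf{SL}_1$ of a degree-$p$ algebra, Proposition \ref{retract}). Without a common splitting datum of this kind your two lifts cannot be compared.

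Two further gaps. First, the comparison above only works when the shorter symbol $\{L,a_1,\dots,a_{n-1}\}$ stays nontrivial over $K$ (otherwise the rational map $Y''\dashrightarrow Y$ used to produce the prime-to-$p$ correspondence may be constant); the paper reduces the general case to this one by proving the claim for the two generic points of $X\times X$ --- over $F(X\times X)$ the shorter symbol remains nonzero by the canonical $p$-dimension comparison of Lemma \ref{cdp=cdp} and Corollary \ref{aRinc} --- and then specializing. Your proposal has no such reduction, and your fallback for the split case (``$Y$ is retract rational by iterating Examples \ref{dva}--\ref{tri}'') is unsupported: retract rationality is only established for $n\le 2$. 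Second, for the injectivity of $\deg$ the paper must compare two \emph{different} closed points of degree $p$; this again goes through a Severi--Brauer variety split by both residue fields and uses that $\CH_0$ of a Severi--Brauer variety is generated by the class of a closed point of minimal degree (\cite{Panin-thesis}, \cite{MR675529}), an input absent from your sketch.
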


\begin{proof}
In the proof we may assume that $n\geq 2$ (see Example \ref{dva}). We induct on $n$.

Write for simplicity
$$
Y=W(L,a_1,a_2,\dots, a_{n-1})\;\;\text{ and }\;\; U=V(Y,a_n).
$$
Thus, $X$ is a smooth
compactification of $U$.

\smallskip

{\em Claim:} Let $K/F$ be a field extension such that the symbol $\{L,a_1,\dots, a_{n-1}\}$ is nontrivial and $x_1,x_2\in U(K)$.
Then $[x_1]=[x_2]$ in $\CH_0(X)$ (with the coefficient ring $\Lambda=\Z_{(p)}$).

\smallskip

Replacing $F$ by $K$, we may assume that $K=F$. Moreover, we can also assume that
$F$ is $p$-special, in particular, $F$ contains $\xi_p$.
We shall write $\{a_0,a_1,\dots, a_{n-1}\}$ for $\{L,a_1,\dots, a_{n-1}\}$.

Let $E_i=F\{z_i\}$, $i=1,2$, where $z_i$ is the image of $x_i$ under the morphism $U\to \widetilde S^p(Y)$.
Each $E_i$ is a cyclic field extension of $F$ of degree $p$. By Lemma \ref{lift}, $Y(E_i)\neq\emptyset$,
and hence $E_i$ splits the symbol $\{a_0,a_1,\dots, a_{n-1}\}$ as $Y$ is a splitting variety for this
symbol.
By \cite[Theorem 5.6]{MR2220090}, there are $b_0, b_1\in F\m$ such that the symbol $\{b_0,b_1\}$
divides $\{a_0,a_1,\dots, a_{n-1}\}$ and both $E_i$ split $\{b_0,b_1\}$.
Let $Y'$ be the Severi-Brauer variety for the cyclic algebra $(F(b_0^{1/p})/F,b_1)$.
Let $U'=V(Y',a_n)$ and let $X'$ be a smooth compactification of $U'$, i.e. $X'$ is a standard norm variety for the $3$-sequence
$(b_0,b_1,a_n)$.

The function field $F(Y')$ splits the symbol $\{b_0,b_1\}$ and hence also splits $\{a_0,a_1,\dots, a_{n-1}\}$.
As $Y$ is a $p$-generic splitting variety
of the symbol $\{a_0,a_1,\dots, a_{n-1}\}$,
there is a finite field extension $M/F(Y')$ of degree prime to $p$ such that $Y$ has a point over $M$.
Choose a
smooth projective model $Y''$ of $M$ over $F$.
There are
two rational maps
\[
Y'\dashleftarrow Y'' \dashrightarrow Y,
\]
where the left map is dominant of degree prime to $p$.
The right one is not constant as $Y$ has no $F$-point. Therefore, the image of the induced
rational map $S^p(Y'')\dashrightarrow S^p(Y)$ intersects $\widetilde S^p(Y)$ nontrivially and hence we have two rational maps
$\widetilde R_{E_i/F}(Y''_{E_i})  \dashrightarrow \widetilde R_{E_i/F}(Y_{E_i})$ (for $i=1,2$).
Thus, there are two commutative diagrams:
\[
\xymatrix{\widetilde R_{E_i/F}(Y'_{E_i})\times E_i^{a_n} \ar[d] & \widetilde R_{E_i/F}(Y''_{E_i})\times E_i^{a_n}  \ar[d]\ar@{-->}[l]\ar@{-->}[r] & \widetilde R_{E_i/F}(Y_{E_i})\times E_i^{a_n} \ar[d] \\
U'  & U''  \ar@{-->}[l]\ar@{-->}[r] &  U, }
\]
where $U''=V(Y'',a_n)$.

The variety $E_i^{a_n}$ is a torsor under the norm one torus $T_i$ for the cyclic extension $E_i/F$ of degree $p$.
As $T_i\simeq R_{E_i/F}({\gm}_{,E_i})/\gm$ and $E_i^{a_n}$ has a rational point,
we can embed the variety $E_i^{a_n}\simeq T_i$ as an open subset into the projective space $\P(E_i)$.
Thus, we have the following commutative diagram of
rational maps of smooth projective varieties:
\[
\xymatrix{R_{E_i/F}(Y'_{E_i})\times \P(E_i) \ar@{-->}[d]_{\beta'_i} & R_{E_i/F}(Y''_{E_i})\times \P(E_i)  \ar@{-->}[d]_{\beta''_i}\ar@{-->}[l]_{\varepsilon'_i}\ar@{-->}[r]^{\varepsilon_i} & R_{E_i/F}(Y_{E_i})\times \P(E_i) \ar@{-->}[d]_{\beta_i} \\
X'  & X''  \ar@{-->}[l]_{\nu'}\ar@{-->}[r]^{\nu} &  X, }
\]
where $X''$ is a smooth compactification of $U''$.

By Proposition \ref{trans}, we have the equality $(\nu')^t\circ\beta'_i=\beta''_i\circ (\varepsilon'_i)^t$ of rational correspondences.
Hence we get a commutative diagram of rational correspondences between smooth projective varieties:
\[
\xymatrix{R_{E_i/F}(Y'_{E_i})\times \P(E_i) \ar@{~>}[d]_{\beta'_i}\ar@{~>}[r]^{\alpha_i}  & R_{E_i/F}(Y_{E_i})\times \P(E_i) \ar@{~>}[d]_{\beta_i} \\
X' \ar@{~>}[r]^{\delta}  & X, }
\]
where $\alpha_i=\varepsilon_i\circ (\varepsilon'_i)^t$ and $\delta=\nu\circ (\nu')^t$.

Note that the correspondences $\beta$ and $\beta_i$ are of multiplicity 1
and the correspondences $\alpha_i$ and $\delta$ are of multiplicity prime to $p$.

Choose rational points $w_i$ in $R_{E_i/F}(Y'_{E_i})\times \P(E_i)$. As $X'$ is $A$-trivial by Example \ref{tri} and the $0$-cycles ${\beta'_i}_*([w_i])$ both
have degree $1$, we have ${\beta'_1}_*([w_1])={\beta'_2}_*([w_2])$ in $\CH_0(X')$. It follows that
\begin{equation}\label{comp}
{\beta_1}_*({\alpha_1}_*([w_1]))={\delta}_*({\beta'_1}_*([w_1]))={\delta}_*({\beta'_2}_*([w_2]))={\beta_2}_*({\alpha_2}_*([w_2])).
\end{equation}
By Lemma \ref{lift}, there are rational points $y_i$ in $\widetilde R_{E_i/F}(Y_{E_i})\times E^{a_n}$ over $x_i$.
It follows from Lemma \ref{point} that ${\beta_i}_*([y_i])=[x_i]$ in $\CH_0(X)$.

Since by the induction hypothesis $Y$ is $A$-trivial,
it follows from Proposition \ref{a-trivial2} that the varieties
$R_{E_i/F}(Y_{E_i})\times \P(E_i)$ are $A$-trivial, hence
\begin{equation}\label{ischo}
{\alpha_i}_*([w_i])=m[y_i],
\end{equation}
where $m=\mult(\delta)=\mult(\alpha_i)=\deg {\alpha_i}_*([w_i])$. It follows from (\ref{comp}) and (\ref{ischo}) that
\[
m[x_1]=m{\beta_1}_*([y_1])={\beta_1}_*({\alpha_1}_*([w_1]))={\beta_2}_*({\alpha_2}_*([w_2]))=m{\beta_2}_*([y_2])=m[x_2]
\]
in $\CH_0(X)$. Since $m$ is prime to $p$, the claim is proved.

Now we finish the proof of the first part of
the theorem.
Let $K/F$ be a field extension and $x_1,x_2\in X(K)$. By Proposition \ref{a-trivial},
it suffices to show that $[x_1]=[x_2]$ in $\CH_0(X_K)$.

Consider the field $L=F(X\times X)$ and the two "generic" points $\xi_1$ and $\xi_2$ in $X(L)$. Note that
$\xi_i\in U(L)$ and
the field $F(X\times X)$ does not split the symbol $\{a_1,\dots,
a_{n-1}\}$: otherwise the variety $Y$ would be equivalent (in the sense of
Section \ref{Abstract Rost motives}) to the varieties $X\sim X\times X$
\newcommand{\cd}{\mathop{\mathrm{cd}}\nolimits}
and we would have $\cd_p Y=\cd_p X$ by Lemma \ref{cdp=cdp}
contradicting $\cd_p Y=p^{n-1}-1<p^n-1=\cd_p X$ (see Corollary \ref{aRinc}).
By the claim, $[\xi_1]=[\xi_2]$ in $\CH_0(X_L)$.
Specializing $\xi_i$ to $x_i$ (see \cite[\S20.3]{Fulton} or \cite{MR1418952}), we get the result.

We prove now that  the degree map $\deg:\CH_0(X)\to \Z_{(p)}$ is injective.
We may assume that $F$ is a $p$-special field.
We claim that if $x$ and $x'$ are two closed points of $X$ of degree $p$,
then $[x]=[x']$ in $\CH_0(X)$.
To prove the claim,
as in the first part of the proof, we find a Severi-Brauer variety $Y$ over $F$,
that is split by the two field extensions
$L:=F(x)$ and $L':=F(x')$ of $F$, and a  correspondence $\delta: Y\rightsquigarrow X$ of degree $m$ prime to $p$.
Let $y$ and $y'$
be closed points of $Y$ with residue fields isomorphic to $L$ and $L'$ respectively.

Consider the
correspondence $\delta_L: Y_L\rightsquigarrow X_L$ and rational points
$y_1\in Y_L$, $x_1\in X_L$ over $y$ and $x$ respectively.
As $\delta_*([y_1])$ is a $0$-cycle on $X_L$ of degree $m$ and the variety $X$ is $A$-trivial by the first part
of the proof, we have $(\delta_L)_*([y_1])=m[x_1]$.
Taking the norms for the extension $L/F$, we get  $\delta_*([y])=m[x]$
in $\CH_0(X)$.
Similarly, $\delta_*([y'])=m[x']$.
As $[y]=[y']$ in $\CH_0(Y)$ by \cite{Panin-thesis} or \cite{MR675529}, we have
$[x]=[x']$ in $\CH_0(X)$.
The claim is proved.

Take any closed point $x\in X$ and set $\deg(x)=p^k$ for some $k>0$.
As $\deg\CH_0(X)=p\Z_{(p)}$ and $F$ is $p$-special,
there is a closed point $x'\in X$
of degree $p$.
It suffices to show that $[x]=p^{k-1}[x']$ in $\CH_0(X)$.
Choose a field $K$ with $F\subset K\subset F(x)$ and
$[F(x):K]=p$.
Let $x_1$ be a closed point in $X_K$ over $x$ with $\deg(x_1)=p$.
We shall show that $[x_1]=[x']_K$ in $\CH_0(X_K)$
and then taking the norms for the extension $K/F$ we get the desired equality.
As both cycles $[x_1]$ and $[x']_K$ have degree $p$,
they are equal in the case $X(K)\neq\emptyset$ as $X$ is $A$-trivial.
In the case $X(K)=\emptyset$ the cycles $[x_1]$ and $[x']_K$ are equal by the claim
as $x_1$ and $x'_K$ are points in $X_K$ of degree $p$.
\end{proof}

}

\renewcommand{\thesubsection}{\thesection-\Roman{subsection}}

{

%renew

\renewcommand{\(}{\bigl(}
\renewcommand{\)}{\bigr)}

%simple

\newcommand{\m}{^{\times}}
\newcommand{\ff}{F^{\times}}
\newcommand{\fs}{F^{\times 2}}
\newcommand{\llg}{\longrightarrow}
\newcommand{\tens}{\otimes}
\newcommand{\inv}{^{-1}}

%stackrel

\newcommand{\Dfn}{\stackrel{\mathrm{def}}{=}}
\newcommand{\iso}{\stackrel{\sim}{\to}}

%mathrm

\newcommand{\mult}{\mathrm{mult}}
\newcommand{\Gam}{\mathrm{\Gamma}}
\newcommand{\cchar}{\mathrm{char}}
\newcommand{\sep}{\mathrm{sep}}
\newcommand{\tors}{\mathrm{tors}}
\newcommand{\id}{\mathrm{id}}
\newcommand{\diag}{\mathrm{diag}}
\newcommand{\disc}{\mathrm{disc}}
\newcommand{\op}{^{\mathrm{op}}}
\newcommand{\ra}{\rightarrow}
\newcommand{\xra}{\xrightarrow}

%operator

\newcommand{\CH}{\operatorname{CH}}
\newcommand{\F}{\operatorname{F}}
\renewcommand{\Im}{\operatorname{Im}}
\newcommand{\Ad}{\operatorname{Ad}}
\newcommand{\NN}{\operatorname{N}}
\newcommand{\Ker}{\operatorname{Ker}}
\newcommand{\Pic}{\operatorname{Pic}}
\newcommand{\Tor}{\operatorname{Tor}}
\newcommand{\Lie}{\operatorname{Lie}}
\newcommand{\ind}{\operatorname{ind}}
\newcommand{\rank}{\operatorname{rank}}
\newcommand{\ch}{\operatorname{char}}
\newcommand{\Inv}{\operatorname{Inv}}
\newcommand{\Gras}{\mathop{\mathrm{Gr}}}
\newcommand{\res}{\operatorname{res}}
\newcommand{\Br}{\operatorname{Br}}
\newcommand{\Spec}{\operatorname{Spec}}
\newcommand{\SK}{\operatorname{SK}}
\newcommand{\SB}{\operatorname{SB}}
\newcommand{\Gal}{\operatorname{Gal}}
\newcommand{\SL}{\operatorname{SL}}
\newcommand{\Spin}{\operatorname{Spin}}
\newcommand{\GL}{\operatorname{GL}}
\newcommand{\SU}{\operatorname{SU}}
\newcommand{\gSL}{\operatorname{\mathbf{SL}}}
\newcommand{\gSp}{\operatorname{\mathbf{Sp}}}
\newcommand{\gSU}{\operatorname{\mathbf{SU}}}
\newcommand{\gO}{\operatorname{\mathbf{O}}}
\newcommand{\gGL}{\operatorname{\mathbf{GL}}}
\newcommand{\gPGU}{\operatorname{\mathbf{PGU}}}
\newcommand{\gSpin}{\operatorname{\mathbf{Spin}}}
\newcommand{\End}{\operatorname{End}}
\newcommand{\Hom}{\operatorname{Hom}}
\newcommand{\Mor}{\operatorname{Mor}}
\newcommand{\Map}{\operatorname{Map}}
\newcommand{\Ext}{\operatorname{Ext}}
\newcommand{\RatCor}{\operatorname{RatCor}}
\newcommand{\Cor}{\operatorname{Cor}}
\newcommand{\Var}{\operatorname{Var}}
\newcommand{\Ab}{\operatorname{Ab}}

%bb

\newcommand{\A}{\mathbb{A}}
\renewcommand{\P}{\mathbb{P}}
\newcommand{\Z}{\mathbb{Z}}
\newcommand{\N}{\mathbb{N}}
\newcommand{\OO}{\mathbb{O}}
\newcommand{\Q}{\mathbb{Q}}
\newcommand{\QZ}{\mathop{\mathbb{Q}/\mathbb{Z}}}
\newcommand{\gm}{\mathbb{G}_m}
\newcommand{\hh}{\mathbb{H}}

%cal
\newcommand{\cA}{\mathcal A}
\newcommand{\cU}{\mathcal U}
\newcommand{\cI}{\mathcal I}
\newcommand{\cJ}{\mathcal J}
\newcommand{\cK}{\mathcal K}
\newcommand{\cO}{\mathcal O}
\newcommand{\cE}{\mathcal E}
\newcommand{\cF}{\mathcal F}
\renewcommand{\cH}{\mathcal H}
\newcommand{\cM}{\mathcal M}
\newcommand{\cP}{\mathcal P}

%frak

\newcommand{\falg}{F\mbox{-}\mathfrak{alg}}
\newcommand{\fgroups}{F\mbox{-}\mathfrak{groups}}
\newcommand{\fields}{F\mbox{-}\mathfrak{fields}}
\newcommand{\groups}{\mathfrak{Groups}}
\newcommand{\abelian}{\mathfrak{Ab}}

\newcommand{\Label}{\label}

\appendix
\addtocounter{section}{17}
\renewcommand{\thesection}{RC}

\section
{Rational correspondences}
\label{Rational correspondences}

We review the construction of the category of rational correspondences due to M.~Rost (unpublished) and
B.~Kahn/R.~Sujatha in \cite{KS}. We give Rost's approach using cycle modules.

\subsection{Integral correspondences}

Let $M$ be a cycle module over a field $F$ and let $X$ be an
algebraic variety over $F$. The groups
$$
C_p(X;M)=\coprod_{x\in X_{(p)}}M(x),
$$
where $M(x)=M\(F(x)\)$, form a {\it cycle complex} $C(X;M)$
\cite[3.2]{MR1418952}. Denote by $A_p(X;M)$ the homology groups of
$C(X;M)$. If $X$ is equidimensional of dimension $d_X$ we set
$$
A^q(X;M)=A_{d_X-q}(X;M).
$$

\begin{example}
If $M=K$ is given by Milnor's $K$-theory of fields, $A_p(X;K)$ are
the Milnor $K$-cohomology groups of $X$, in particular, the Chow groups
of $X$ (see \cite[Chapter IX]{EKM}).
\end{example}

Let $X,Y$ and $Z$ be algebraic varieties over $F$ with $X$
irreducible smooth and complete. We define a pairing ({\it
$\cup$-product})
\[
\CH_r(X\times Y)\tens A_p(Z\times X;M)\xra{\cup}
A_{r+p-d_X}(Z\times Y;M),\ \ \ (v,a)\mapsto v\cup a
\]
as the composition
\[
\CH_r(X\times Y)\tens A_p(Z\times X;M)\xra{\times}
A_{r+p}(X\times X\times Z\times Y;M)
\]
\[
\xra{(\Delta\times\id_{Y\times Z})^*} A_{r+p-d_X}(X\times Z\times
Y;M)\xra{q_*} A_{r+p-d_X}(Z\times Y;M),
\]
where $\times$ is the external product, $\Delta:X\to X\times X$ is
the diagonal embedding and $q:X\times Z\times Y\to Z\times Y$ is
the projection. (See \cite{MR1418952} for the definitions. We need $X$
smooth to get $\Delta$ a regular embedding and $X$ complete to
have $q$ proper.)
\begin{example}
Let $f:X\to Y$ and $g:Y\to X$ be morphisms with the graphs
$\Gamma_f\subset X\times Y$ and $\Gamma_g\subset Y\times X$. Then
$[\Gamma_f]\cup=( \id_{Z}\times f)_*$ and
$[\Gamma_g]^t\cup=(\id_{Z}\times g)^*$ (here $t$ is the
transposition involution).
\end{example}

In particular, we have the product
\[
\CH_r(X\times Y)\tens \CH_p(Z\times X)\xra{\cup}
\CH_{r+p-d_X}(Z\times Y).
\]
It is taken as the composition law for the category of {\it
integral correspondences} $\Cor(F)$ (see \cite{MR0258836} and \cite[Chapter XII]{EKM}) with the
objects smooth complete varieties over $F$ and morphisms
\[
\Mor_{\Cor(F)}(X,Y)=\coprod_{i}\CH_{d_i}(X_i\times Y),
\]
where $X_i$ are irreducible (connected) components of $X$ with
$d_i=\dim X_i$.

Denote by $\Var(F)$ the category of smooth complete varieties over
$F$ and morphisms of varieties. There is a natural functor
\[
\Var(F)\to \Cor(F),\ \ \ X\mapsto X,\ \ f\mapsto [\Gamma_f].
\]

The functors $\Var(F)\to \Ab$, $X\mapsto A_p(X;M)$ and $X\mapsto
A^p(X;M)$ factor through a covariant functor $\Cor(F)\to \Ab$,
\begin{equation}\label{dim}
X\mapsto A_p(X;M), \ \ \ a\mapsto a\cup
\end{equation}
and a contravariant functor
\begin{equation}\label{codim}
X\mapsto A^p(X;M), \ \ \ a\mapsto a^t\cup.
\end{equation}

\subsection{The cycle module $A_0[Y,M]$}

For a cycle module $M$ over $F$ and an algebraic variety $Y$ over
$F$ define the cycle module $A_0[Y,M]$ over $F$ by
\[
A_0[Y,M](L)=A_0(Y_L,M)
\]
(see \cite[\S7]{MR1418952}). There is a canonical map of complexes
\[
\theta_{Y,M}: C(X\times Y;M)\to C(X;A_0[Y,M]),
\]
that takes an element in $M(z)$ for $z\in (X\times Y)_{(p)}$ to
zero if dimension of the projection $x$ of $z$ in $X$ is less than
$p$ and identically to itself otherwise. In the latter case we
consider $z$ as a point of dimension $0$ in $Y_x:=Y_{F(x)}$ under the
inclusion $Y_x\subset X\times Y$. Thus, $\theta_{Y,M}$ ``ignores"
points in $X\times Y$ that lose dimension being projected to $X$.

We study various compatibility properties of $\theta$.
\subsubsection{Cross products}

Let $ N\times M\to P$ be a bilinear pairing of cycle modules over
$F$. For a variety $Y$ over $F$ we can define a pairing
\[
A_0[Y,N] \times M \to A_0[Y,P]
\]
in an obvious way.

\begin{lemma}\label{cross}
Let $M$ be a cycle module, $X,Y$ and $Z$ varieties over $F$. Then
the following diagram is commutative:
\[
\begin{CD}
C(X\times Y;N)\tens C(Z;M) @>{\times}>> C(X\times Y\times Z;P)  \\
@V{{\theta_{Y,N}}\tens\id}VV  @VV{\theta_{Y,P}}V     \\
C(X;A_0[Y,N])\tens C(Z;M)    @>{\times}>> C(X\times Z;A_0[Y,P]).     \\
\end{CD}
\]
\end{lemma}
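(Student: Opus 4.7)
The plan is to prove the commutativity by evaluating both compositions on an elementary generator $\alpha \otimes \beta$, where $\alpha \in N(w)$ is supported at a point $w \in (X \times Y)_{(p)}$ and $\beta \in M(z)$ is supported at $z \in Z_{(q)}$. Write $x \in X$ for the image of $w$ under the projection $X \times Y \to X$; then $\dim x \leq p$, with equality precisely when this projection does not drop the dimension of $w$. Both the external product $\times$ and the map $\theta_{Y, -}$ are defined point-by-point, so it suffices to check equality on such generators.

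First I would handle the vanishing case. If $\dim x < p$, then by definition $\theta_{Y, N}(\alpha) = 0$, so the down-then-right composite gives $0$. On the other hand, the external product $\alpha \times \beta$ is supported on the generic points of $\overline{\{w\}} \times \overline{\{z\}}$ in $X \times Y \times Z$, and projecting any such generic point $u$ to $X \times Z$ lands in $\overline{\{x\}} \times \overline{\{z\}}$, which has dimension $\dim x + q < p + q = \dim u$. Hence $\theta_{Y, P}(\alpha \times \beta) = 0$ as well.

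Next I would treat the case $\dim x = p$, where neither composite vanishes. Both outputs are supported on the generic points $u'$ of $\overline{\{x\}} \times \overline{\{z\}} \subset X \times Z$, so it suffices to compare the components at each such $u'$. In the right-then-down direction, one unwinds the cycle-module cross product $N \times M \to P$: for the unique generic point $u \in \overline{\{w\}} \times \overline{\{z\}}$ lying over $u'$ (or the sum of such, according to the decomposition of $F(w) \otimes_{F(x)} F(u')$), $\alpha \times \beta$ is the element of $P(u)$ produced by restricting $\alpha$ and $\beta$ to $F(u)$ and multiplying via $N \times M \to P$; then $\theta_{Y, P}$ simply reinterprets this as a zero-cycle on $Y_{u'}$ with support on $u$. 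In the down-then-right direction, $\theta_{Y, N}(\alpha)$ is $\alpha$ viewed as a class in $A_0(Y_x; N)$ supported at $w$, and the cross product against $\beta$ in the pairing $A_0[Y, N] \times M \to A_0[Y, P]$ is defined pointwise from $N \times M \to P$ after restriction to the generic points $u'$. Comparing the two recipes shows that both produce the same class in $A_0[Y, P](u') = A_0(Y_{u'}; P)$.

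The main obstacle will be the careful bookkeeping of residue-field extensions and generic points of products of closures that go into the definition of the external product on cycle modules, plus ensuring that the induced pairing $A_0[Y, N] \times M \to A_0[Y, P]$ is compatible with the restriction maps on the generic points $u'$ of $\overline{\{x\}} \times \overline{\{z\}}$. Once one organizes the comparison per component at each $u'$, the equality reduces to the commutativity of restriction and multiplication in the cycle-module cross product, which is part of the Rost axioms in \cite{MR1418952}.
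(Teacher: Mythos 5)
Your plan is correct in outline and would yield a complete proof, but it is organized differently from the paper's argument, and the difference is worth noting. You verify the identity directly on generators $\alpha\otimes\beta$ with $\alpha\in N(w)$, $w\in(X\times Y)_{(p)}$, splitting into the case where the projection $x$ of $w$ to $X$ drops dimension (where both composites visibly vanish --- this part of your argument is complete and correct) and the case $\dim x=p$, where you must compare, at each generic point $u'$ of $\overline{\{x\}}\times\overline{\{z\}}$, the components produced by the two routes. That comparison is exactly where all the content sits, and you defer it to ``bookkeeping of residue-field extensions and multiplicities''; it does go through, but one must check that the minimal primes of $F(w)\otimes_F F(z)$ lying over $u'$ match the minimal primes of $F(w)\otimes_{F(x)}F(u')$ with the correct length multiplicities, which is the transitivity of flat pull-back applied to $F(w)\otimes_F F(z)\cong F(w)\otimes_{F(x)}(F(x)\otimes_F F(z))$. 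The paper sidesteps this hand computation: for a fixed $\mu\in M(z)$ it uses Rost's definition of the cross product as the composite of the flat pull-back to the fiber over $z$, multiplication by $\mu$, and the push-forward from the fiber into $X\times Y\times Z$, so the lemma reduces to three squares --- $\theta$ commutes with pull-back to the fiber, with multiplication by $\mu$ (which does not move supports), and with the inclusion of the fiber --- each of which is essentially immediate. The factorization absorbs precisely the multiplicity bookkeeping you identify as the main obstacle into the standard functoriality of flat pull-back. So: no error and no missing idea, but if you carry out your version you should expect to prove by hand the compatibility that the paper gets for free from the definition of $\times$.
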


\begin{proof}
Let $z\in Z_{(k)}$ and $\mu\in C(Z;M)$. Consider the following
commutative diagram
\small{
\[
\begin{CD}
C(X\times Y;N) @>{{\pi_z'}^*}>> C\((X\times Y)_z;N\)   @>{m'_\mu}>>  C\((X\times Y)_z;P\) @>{{i_z'}_*}>> C(X\times Y\times Z;P) \\
@V{\theta_{Y,N}}VV  @V{\theta_{Y,N}}VV  @ V{\theta_{Y,P}}VV  @V{\theta_{Y,P}}VV  \\
C(X;A_0[Y,N])    @>{\pi_z^*}>> C(X_z;A_0[Y,N])  @>{m_\mu}>>  C(X_z;A_0[Y,P])  @>{{i_z}_*}>>  C(X\times Z;A_0[Y,P]), \\
\end{CD}
\]
}
where $\pi_z:X_z\to X$ and $\pi'_z:(X\times Y)_z\to X\times Y$ are the
natural projections, $m_\mu$ and $m'_\mu$ are the multiplications
by $\mu$, $i_z:X_z\to X\times Z$ and $i_z':(X\times Y)_z\to
X\times Y\times Z$ are the inclusions. By the definition of the cross
product, the compositions in the two rows of the diagram are the
multiplications by $\mu$.
\end{proof}

\subsubsection{Pull-back maps}

Let $f:Z\hookrightarrow X$ be a regular closed embedding,
$N_{X/Z}$ the normal bundle over $Z$. Choose a coordination $\tau$
of $N_{X/Z}$ \cite[\S9]{MR1418952}. For a variety $Y$, the closed
embedding
\[
f'=f\times\id_Y:Z\times Y\hookrightarrow X\times Y
\]
is also regular and the normal bundle $N_{X\times Y/Z\times Y}$ is
isomorphic to $N_{X/Z}\times Y$. We choose the induced
coordination $\tau'=\tau\times Y$ of $N_{X\times Y/Z\times Y}$.

\begin{lemma}\label{pull-back}
For every cycle module $M$ the following diagram commutes:
\[
\begin{CD}
C(X\times Y;M)  @>{I(f')}>> C(Z\times Y;M)  \\
@V{\theta_{Y,M}}VV  @VV{\theta_{Y,M}}V     \\
C(X;A_0[Y,M])   @>{I(f)}>> C(Z;A_0[Y,M]),     \\
\end{CD}
\]
where the pull-back maps $I(f)$ and $I(f')$ are chosen with
respect to the coordinations $\tau$ and $\tau'$ respectively (see \cite[\S12]{MR1418952}).
\end{lemma}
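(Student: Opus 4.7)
The plan is to decompose the pull-back $I(f)$ into the elementary operations on cycle complexes from which it is built in \cite{MR1418952}, namely a flat pull-back, a boundary map, and the trivialization supplied by the coordination, and then to verify that $\theta_{Y,M}$ commutes with each of these ingredients separately. The compatibility then follows by composing the resulting commutative squares. More precisely, I would recall that for a regular closed embedding $f:Z\hookrightarrow X$ one forms the deformation to the normal cone $D=D(X,Z)$, a flat scheme over $\mathbb{A}^1$ with generic fiber $X$ and special fiber $N_{X/Z}$; the pull-back $I(f)$ is the composition of the flat pull-back along the open inclusion $X\times\Gm\hookrightarrow D$, the boundary map $\partial$ associated to $t\in\cO(D)$, and the isomorphism $C(N_{X/Z};M)\xrightarrow{\sim}C(Z;M)$ supplied by the coordination $\tau$.

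The key observation is that the analogous construction for $f'=f\times\id_Y$ is obtained simply by multiplying the whole picture with $Y$: $D(X\times Y,Z\times Y)=D(X,Z)\times Y$, with special fiber $N_{X/Z}\times Y$, and the coordination $\tau'=\tau\times Y$ is the one induced from $\tau$. I would then verify, essentially by inspection on the generators $M(z)\subset C(X\times Y;M)$ indexed by points $z\in(X\times Y)_{(p)}$, that $\theta_{Y,M}$ commutes with each of the three building blocks. For flat pull-back along a projection of the form $U\times Y\to U$ this is immediate from the definition of $\theta$, since the projection to $X$ of a point $z\in (X\times Y)_{(p)}$ does not change under the base change that defines the flat pull-back. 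For push-forward along a closed embedding involving only the $X$-factor (for instance when absorbing the coordination isomorphism) the same holds because the map only moves along the $X$ direction and cannot create a drop in dimension of the $X$-projection. Commutativity with the boundary $\partial$ associated to a function pulled back from the $X$-side follows from Rost's formulas (R3a)--(R3e) for cycle modules, because $\theta_{Y,M}$ only modifies how one groups the points according to their image in $X$, while $\partial$ is computed fibrewise over $X$.

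The nontrivial point, and the one I expect to be the main obstacle, is to check that $\theta_{Y,M}$ commutes with the boundary map in this mixed situation: a point $z\in (X\times Y)_{(p)}$ whose projection $x\in X$ drops dimension can contribute to $\partial$ in a way that looks asymmetric between the two sides of the diagram. The right way around this is to work one ``column'' $X_{(q)}$ at a time and use the locally closed decomposition $X\times Y=\coprod_q X_{(q)}\times Y$; then $\theta_{Y,M}$ keeps only the summand $q=p$, while the boundary maps in both complexes are compatible with the filtration on $X$ by dimension of the projection. Writing $I(f)$ and $I(f')$ as alternating sums of the boundary of a specialization as in \cite[\S11--12]{MR1418952}, and using Lemma~\ref{cross} as a template for how the cross-product structure survives the operation $\theta_{Y,M}$, the identity $\theta_{Y,M}\circ I(f')=I(f)\circ\theta_{Y,M}$ drops out. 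Once commutativity with each constituent is established, one assembles the squares and reads off the conclusion.
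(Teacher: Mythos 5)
Your proposal follows essentially the same route as the paper: decompose $I(f)$ into the flat pull-back to $X\times\Gm$, multiplication by the coordinate function $t$, the boundary map for the deformation space $D(X,Z)$, and the coordination map $r(\tau)$; observe that $D(X\times Y,Z\times Y)=D(X,Z)\times Y$ with $\tau'=\tau\times Y$; and check that $\theta_{Y,M}$ commutes with each constituent square by inspection on generators. The only (cosmetic) difference is at the boundary step, where the paper obtains the commutation of $\theta_{Y,M}$ with $\partial$ from a commutative ladder of short exact sequences of complexes rather than from your filtration by the dimension of the projection to $X$; both justifications work.
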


\begin{proof}
Let $q:X\times\gm\to X$ and $q':X\times Y\times\gm\to X\times Y$
be the natural projections. The following diagram is clearly
commutative:
\[
\begin{CD}
C(X\times Y;M)  @>{{q'}^*}>>  C(X\times Y\times\gm;M)   \\
@V{\theta_{Y,M}}VV  @VV{\theta_{Y,M}}V     \\
C(X;A_0[Y,M])   @>{q^*}>> C(X\times\gm;A_0[Y,M])     \\
\end{CD}
\]
(here $q^*$ and ${q'}^*$ are the flat pull-back maps
\cite[3.5]{MR1418952}).

Let $t$ be the coordinate function on $\gm$. The map
$\theta_{Y,M}$ clearly commutes with the multiplication by $t$,
i.e. the following diagram is commutative:
\[
\begin{CD}
C(X\times Y\times\gm;M)  @>{(t)}>>  C(X\times Y\times\gm;M)   \\
@V{\theta_{Y,M}}VV  @VV{\theta_{Y,M}}V     \\
C(X\times\gm;A_0[Y,M])   @>{(t)}>> C(X\times\gm;A_0[Y,M]).     \\
\end{CD}
\]

Let $D=D(X,Z)$ be the deformation space of the embedding $f$
\cite[\S10]{MR1418952}. There is a closed embedding
$i:N_{X/Z}\hookrightarrow D$ with the open complement
$j:X\times\gm\hookrightarrow D$. Then $D'=D\times Y$ is the
deformation space $D(X\times Y,Z\times Y)$ with the closed
embedding
\[
i'=i\times \id_Y:N_{X\times Y/Z\times Y}\hookrightarrow
D'
\]
and the open complement $j'=j\times \id_Y:X\times Y\times\gm\hookrightarrow D'$.

The commutative diagram of complexes with the exact rows
\small{
\[
\begin{CD}
0   @>>>  C(N_{X/Z}\times Y;M)  @>{i'_*}>> C(D';M)   @>{{j'}^*}>> C(X\times Y\times\gm;M)   @>>>  0 \\
@. @V{\theta_{Y,M}}VV  @VV{\theta_{Y,M}}V   @V{\theta_{Y,M}}VV  \\
0  @>>>  C(N_{X/Z};A_0[Y,M])   @>{i_*}>> C(D;A_0[Y,M])   @>{j^*}>>   C(X\times\gm;A_0[Y,M])  @>>>  0  \\
\end{CD}
\]
}
induces the commutative diagram
\[
\begin{CD}
C(X\times Y\times\gm;M)  @>{\partial}>>  C(N_{X/Z}\times Y;M)   \\
@V{\theta_{Y,M}}VV  @VV{\theta_{Y,M}}V     \\
C(X\times\gm;A_0[Y,M])   @>{\partial}>> C(N_{X/Z};A_0[Y,M]).     \\
\end{CD}
\]

The coordinations $\tau$ and $\tau'$ induce the commutative
diagram \cite[\S9]{MR1418952}
\[
\begin{CD}
C(N_{X/Z}\times Y;M)   @>{r(\tau')}>>  C(Z\times Y;M)   \\
@V{\theta_{Y,M}}VV  @VV{\theta_{Y,M}}V     \\
C(N_{X/Z};A_0[Y,M])   @>{r(\tau)}>> C(Z;A_0[Y,M]).     \\
\end{CD}
\]

By the definition of the pull-back map, the diagram in question is the
composition of the four commutative square diagrams considered in the
proof.
\end{proof}

\subsubsection{Push-forward maps}

Let $f:X\to Z$ be a morphism of varieties over $F$. For a variety
$Y$ set
\[
f'=f\times \id_Y:X\times Y\to Z\times Y.
\]

\begin{lemma}\label{push-forward}
The following diagram is commutative:
\[
\begin{CD}
C(X\times Y;M)   @>{f'_*}>>  C(Z\times Y;M)   \\
@V{\theta_{Y,M}}VV  @VV{\theta_{Y,M}}V     \\
C(X;A_0[Y,M])  @>{f_*}>> C(Z;A_0[Y,M]).     \\
\end{CD}
\]
\end{lemma}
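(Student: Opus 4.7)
The plan is to check commutativity directly on a generator $m \in M(z)$ for $z \in (X \times Y)_{(p)}$. Denote the projection of $z$ to $X$ by $x$, and set $x' := f(x)$ and $z' := f'(z)$; since the diagram concerns graded maps of cycle complexes, this pointwise verification suffices. Recall that $\theta_{Y,M}$ sends $m$ to $0$ unless $\dim \overline{\{x\}} = p$, in which case $z$ is closed in the generic fiber $Y_x := Y_{F(x)}$ and $\theta_{Y,M}(m)$ is the $0$-cycle at $z$ with value $m$, placed at the $x$-component of $C_p(X;A_0[Y,M])$. Likewise, $f_*$ and $f'_*$ act by the corestriction $c_{F(\cdot)/F(\cdot)}$ when the relevant residue-field extension is finite, and by $0$ otherwise. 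The proof then separates naturally according to the dimensions of $\overline{\{x\}}$ and $\overline{\{x'\}}$.

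First I would handle the cases where both composites vanish. If $\dim \overline{\{x\}} < p$, then the right-hand composite is $0$ by definition of $\theta_{Y,M}$. The left-hand composite is also $0$: either $f'_*(m) = 0$ (when $[F(z):F(z')]$ is infinite), or else $[F(z):F(z')]$ is finite, $\dim \overline{\{z'\}} = p$, and the projection $\overline{\{x'\}}$ has dimension at most $\dim \overline{\{x\}} < p$, so $\theta_{Y,M}$ still annihilates $f'_*(m)$. An essentially identical argument disposes of the case $\dim \overline{\{x\}} = p$ and $\dim \overline{\{x'\}} < p$: the right composite is $0$ because $[F(x):F(x')]$ is infinite, so $f_*$ vanishes at the $x$-component; the left composite is $0$ because $\theta_{Y,M}$ cannot register at $z'$ when $\dim \overline{\{x'\}} < p \leq \dim \overline{\{z'\}}$.

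The remaining (and only non-trivial) case is $\dim \overline{\{x\}} = \dim \overline{\{x'\}} = p$. Here $f : \overline{\{x\}} \to \overline{\{x'\}}$ is a dominant map between irreducible varieties of the same dimension, hence generically finite, so $[F(x):F(x')] < \infty$. The restriction of $f' = f \times \id_Y$ to $\overline{\{z\}} \subseteq \overline{\{x\}} \times Y$ then maps generically finitely onto $\overline{\{z'\}}$ which is of dimension $p$, so $[F(z):F(z')] < \infty$ as well. On the right, $f_*$ at the $x$-component is the corestriction $A_0(Y_x;M) \to A_0(Y_{x'};M)$, which equals push-forward along the finite base-change morphism $Y_x \to Y_{x'}$ and therefore sends the $0$-cycle at $z$ with value $m$ to the $0$-cycle at $z'$ with value $c_{F(z)/F(z')}(m)$. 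On the left, $f'_*(m) = c_{F(z)/F(z')}(m)$ placed at $z'$, and $\theta_{Y,M}$ exhibits it as the same $0$-cycle on $Y_{x'}$ at the $x'$-component. The two composites agree, completing the verification.

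The only obstacle worth flagging is the careful dimension bookkeeping in the vanishing cases, where one must note that the failure of $\theta_{Y,M}$ to be non-zero at $z'$ follows purely from the drop in $\dim \overline{\{x'\}}$, regardless of whether $[F(z):F(z')]$ happens to be finite; every other step is a direct unravelling of the definitions of $f_*$, $f'_*$, and $\theta_{Y,M}$.
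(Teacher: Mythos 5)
Your verification is correct, and in the essential (non-degenerate) case it coincides with the paper's: when both $\overline{\{x\}}$ and $\overline{\{x'\}}$ have dimension $p$, both composites produce the class of the $0$-cycle with value $c_{F(z)/F(z')}(m)$ at the closed point $z'$ of $Y_{x'}$, the key point being that the corestriction of the cycle module $A_0[Y,M]$ along the finite extension $F(x)/F(x')$ is push-forward along $Y_x\to Y_{x'}$. Where you genuinely differ is in how the vanishing cases are cut. The paper splits according to $\dim f'(u)$ and the dimension of the projection of $u$ to the factor $Y$, asserting in the degenerate cases that $\theta_{Y,M}$ itself vanishes on both sides; read literally, this passes over the configuration in which the projection of $u$ to $X$ still has dimension $p$ (so $\theta_{Y,M}$ does \emph{not} annihilate the class) while its image in $Z$ drops dimension --- there it is the push-forward $f_*$ on $C(X;A_0[Y,M])$, not $\theta_{Y,M}$, that kills the term. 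Your decomposition by $\dim\overline{\{x\}}$ and $\dim\overline{\{x'\}}$ --- precisely the two dimensions that $\theta_{Y,M}$ and $f_*$ respectively test --- isolates that configuration as your second case and disposes of it correctly, so your bookkeeping is the more complete of the two. In short: same strategy (evaluate on generators, reduce to compatibility of the norm map with push-forward), but a different and somewhat more careful organization of the degenerate cases.
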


\begin{proof}
Let $u\in (X\times Y)_{(p)}$, $a\in M(u)$. Set $v=f'(u)\in Z\times
Y$. If $\dim(v)<p$ then $(f'_*)_u(a)=0$. In this case the dimension of
the projection $y$ of $u$ in $Y$ is less than $p$ and hence
$\theta_u(a)=0$.

Assume that $\dim(v)=p$. Then $F(u)/F(v)$ is a finite field
extension and
\[
b=(f'_*)_u(a)=c_{F(u)/F(v)}(a)\in M(v),
\]
where $c_{F(u)/F(v)}$ is the norm map.
If $\dim(y)<p$, then $\theta_u(a)=0$ and $\theta_v(b)=0$.

Assume that $\dim(y)=p$. Then
\[
(\theta\circ f'_*)_u(a)=c_{F(u)/F(v)}(a)=b
\]
considered as an element of $A_0[Y;M](z)=A_0(Y_{z};M)$, where $z$
is the image of $v$ in $Z$. On the other hand,
\[
(f_*\circ \theta)_u(a)=\varphi_*(a),
\]
where $\varphi:Y_{x}\to Y_{z}$ is the natural morphism ($x$ is the
image of $u$ in $X$) and $a$ is considered as an element of
$A_0[Y;M](x)$. It remains to notice that
\[
\varphi_*(a)=c_{F(u)/F(v)}(a)=b.\qedhere
\]
\end{proof}

\subsection{The category of rational correspondences}\label{crc}

Let $X$ and $Y$ be varieties over $F$ and let $M$ be a cycle
module over $F$. By Lemma \ref{cross}, for the pairing $K\times
M\to M$ and $Z=X$ we have the commutative diagram
\[
\begin{CD}
\CH_{d_X}(X\times Y)\tens A_0(X;M)   @>{\times}>>  A_{d_X}(X\times X\times Y;M)   \\
@V{{\theta_{Y,M}}\tens\id}VV  @VV{\theta_{Y,M}}V     \\
A_{d_X}(X;A_0[Y,K_0])\tens A_0(X;M)  @>{\times}>> A_{d_X}(X\times X;A_0[Y,M]).     \\
\end{CD}
\]

Assume that $X$ is smooth. Let $\Delta:X\to X\times X$ be the
diagonal embedding and $\Delta'=\Delta\times\id_Y$. By Lemma
\ref{pull-back}, the following diagram is commutative:
\[
\begin{CD}
A_{d_X}(X\times X\times Y;M)   @>{{\Delta'}^*}>>  A_0(X\times Y;M)   \\
@V{\theta_{Y,M}}VV  @VV{\theta_{Y,M}}V     \\
A_{d_X}(X\times X;A_0[Y,M])   @>{\Delta^*}>> A_0(X;A_0[Y,M]).     \\
\end{CD}
\]

Finally, assume that $X$ is complete. Let $f:X\to \Spec F$ be the
structure morphism and $f'=f\times\id_Y$. Lemma \ref{push-forward}
gives the following commutative diagram:
\[
\begin{CD}
A_0(X\times Y;M)   @>{f'_*}>>   A_0(Y;M)  \\
@V{\theta_{Y,M}}VV  @|     \\
A_0(X;A_0[Y,M])    @>{f_*}>> A_0(\Spec F;A_0[Y,M]).     \\
\end{CD}
\]

\begin{prop}\label{factor}
Let $X$ and $Y$ be varieties over $F$, let $X$ be irreducible
smooth and proper and let $M$ be a cycle module over $F$. Then the
pairing
\[
\CH_{d_X}(X\times Y)\tens A_0(X;M) \xra{\cup}  A_0(Y;M)
\]
is trivial on all cycles in $\CH_{d_X}(X\times Y)$ that are not
dominant over $X$. In other words, the $\cup$-product factors
through a natural pairing
\[
r:\CH_0(Y_{F(X)})\tens A_0(X;M) \xra{\cup}  A_0(Y;M).
\]
\end{prop}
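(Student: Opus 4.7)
The plan is to stack the three commutative squares supplied immediately before the statement. Combined vertically, they place the $\cup$-product as the top row of a larger commutative diagram whose bottom row is the analogous composition for the coefficient module $A_0[Y,M]$:
\[
\CH_{d_X}(X\times Y)\otimes A_0(X;M)\xrightarrow{\times}A_{d_X}(X\times X\times Y;M)\xrightarrow{(\Delta')^*}A_0(X\times Y;M)\xrightarrow{f'_*}A_0(Y;M),
\]
\[
A_{d_X}(X;A_0[Y,K_0])\otimes A_0(X;M)\xrightarrow{\times}A_{d_X}(X\times X;A_0[Y,M])\xrightarrow{\Delta^*}A_0(X;A_0[Y,M])\xrightarrow{f_*}A_0(Y;M),
\]
connected by $\theta_{Y,K_0}$ on the first factor of the tensor product and by $\theta_{Y,M}$ on the intermediate terms, with the final canonical identification $A_0(\Spec F;A_0[Y,M])=A_0(Y;M)$. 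Consequently the $\cup$-product factors through the map $\theta_{Y,K_0}\colon\CH_{d_X}(X\times Y)\to A_{d_X}(X;A_0[Y,K_0])$.

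Next I would inspect $\theta_{Y,K_0}$ directly. By the construction recalled in Section~\ref{Rational correspondences}, $\theta$ annihilates any class supported at a point $z\in(X\times Y)_{(d_X)}$ whose image $x$ in $X$ has $\dim x<d_X$. Since $X$ is irreducible of dimension $d_X$, the only point of $X$ of dimension $d_X$ is the generic point $\xi$. Hence $\theta_{Y,K_0}$ vanishes on precisely the $d_X$-cycles on $X\times Y$ that are not dominant over $X$, and the first assertion follows.

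For the reformulation, I would identify the top chain group
\[
C_{d_X}(X;A_0[Y,K_0])=A_0[Y,K_0]\bigl(F(X)\bigr)=\CH_0(Y_{F(X)}),
\]
under which $\theta_{Y,K_0}$ is, at chain level, the flat pull-back to the generic fiber $Y_{F(X)}\subset X\times Y$. The localization exact sequence for Chow groups \cite[\S1.8]{Fulton}, applied to $\Spec F(X)\hookrightarrow X$, then shows that the induced map $\CH_{d_X}(X\times Y)\to\CH_0(Y_{F(X)})$ is surjective with kernel equal to the subgroup generated by non-dominant cycles. Combining this surjectivity with the vanishing on non-dominant cycles from the previous step, the $\cup$-product descends uniquely to the desired pairing $r\colon\CH_0(Y_{F(X)})\otimes A_0(X;M)\to A_0(Y;M)$.

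The main point that needs care is the identification of the kernel of the generic-fiber pull-back with the non-dominant cycles; this is a direct consequence of the localization sequence. Beyond that, the argument is essentially bookkeeping to combine the three commutative squares already set up before the statement.
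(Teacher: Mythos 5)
Your argument is correct and is essentially the paper's own proof: the paper likewise composes the three commutative squares of Section \ref{crc}, identifies $A_{d_X}(X;A_0[Y,K_0])$ with $\CH_0(Y_{F(X)})$, and reads off the factorization from the fact that the left vertical map is $\theta_{Y,K_0}\otimes\id$, which by construction annihilates cycles not dominant over $X$. Your additional remark identifying the kernel of $\CH_{d_X}(X\times Y)\to\CH_0(Y_{F(X)})$ via the localization sequence is a harmless (and correct) elaboration of what the paper leaves implicit in the phrase ``in other words.''
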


\begin{proof}
Composing all three diagrams in Section \ref{crc} and taking into account that
\[
A_{d_X}(X;A_0[Y,K_0])=\CH_0(Y_{F(X)}),
\]
we get the commutative diagram
\[
\begin{CD}
\CH_{d_X}(X\times Y)\tens A_0(X;M)   @>{\cup}>>   A_0(Y;M)  \\
@V{r\times\id}VV  @|     \\
\CH_0(Y_{F(X)})\tens A_0(X;M)    @>{\cup}>> A_0(Y;M),    \\
\end{CD}
\]
whence the statement.
\end{proof}

In the conditions of Proposition \ref{factor}, for an irreducible
variety $Z$ over $F$ the diagram
\[
\begin{CD}
\CH_{d_X}(X\times Y)\tens A_0(Z\times X;M)   @>{\cup}>>  A_0(Z\times Y;M)  \\
@VVV  @VVV     \\
 \CH_{d_X}(X\times Y)\tens A_0(X_{F(Z)};M)    @>{\cup}>> A_0(Y_{F(Z)};M)     \\
 @V{r\times\id}VV  @|     \\
 \CH_0(Y_{F(X)})\tens A_0(X_{F(Z)};M)  @>{\cup}>>  A_0(Y_{F(Z)};M)
\end{CD}
\]
is commutative.

In particular, we have a well defined pairing
\[
\CH_0(Y_{F(X)})\tens \CH_0(X_{F(Z)})    \xra{\cup}
\CH_0(Y_{F(Z)})
\]
that can be taken for the composition law in the category of
{\it rational correspondences} $\RatCor(V)$ with the objects smooth
complete varieties over $F$ and the morphisms
\[
\Mor_{\RatCor(F)}(X,Y)=\coprod_{i}\CH_0(Y_{F(X_i)}),
\]
where $X_i$ are all irreducible (connected) components of $X$.

There is an obvious functor $\kappa:\Cor(F)\to \RatCor(F)$.

\begin{thm}\label{Rmain}
For a cycle module $M$ there are well defined:

\noindent {\rm (1)} The covariant functor
\[
\RatCor(F)\to \Ab,\ \ \ X\mapsto A_0(X;M),\ a\mapsto a\cup,
\]
i.e. the functor $(\ref{dim})$ factors through $\kappa$ if $p=0$.

\noindent {\rm (2)} The contravariant functor
\[
\RatCor(F)\to \Ab,\ \ \ X\mapsto A^0(X;M),\ a\mapsto a^t\cup,
\]
i.e. the functor $(\ref{codim})$ factors through $\kappa$ if
$p=0$.
\end{thm}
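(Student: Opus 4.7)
The plan is to show that both functors $(\ref{dim})$ and $(\ref{codim})$ factor through $\kappa : \Cor(F) \to \RatCor(F)$. For each, I need (i) a factorization lemma stating that the action $a \mapsto (a\cup)$ (resp.\ $a \mapsto (a^t\cup)$) depends only on the image $\kappa(a) \in \CH_0(Y_{F(X)})$, and (ii) compatibility with composition. The identity check is uniform in both cases: $\kappa([\Delta_X])$ is the class of the generic point $[\xi] \in \CH_0(X_{F(X)})$, which serves as the identity of $X$ in $\RatCor(F)$, while $[\Delta_X]\cup$ (and equally $[\Delta_X]^t\cup$) acts as the identity on the relevant group because $\Delta_X$ is the identity in $\Cor(F)$.

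For part (1), the factorization lemma is precisely Proposition \ref{factor}, producing the pairing $r : \CH_0(Y_{F(X)}) \tens A_0(X;M) \to A_0(Y;M)$. Composition compatibility I would read off from the commutative diagram that appears immediately after Proposition \ref{factor}: with an auxiliary irreducible smooth complete variety $Z$, it shows that $\CH_{d_X}(X \times Y) \tens A_0(Z \times X;M) \to A_0(Z \times Y;M)$ factors through $\CH_0(Y_{F(X)}) \tens A_0(X_{F(Z)};M) \to A_0(Y_{F(Z)};M)$, which in turn delivers the associativity $(\phi \circ \psi)\cup\eta = \phi \cup (\psi \cup \eta)$ matching the composition law in $\RatCor(F)$.

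For part (2), I would prove the analog of Proposition \ref{factor} by a direct support argument. Suppose $\kappa(a) = 0$; then $a$ admits a representative cycle supported on $X' \times Y$ for a closed $X' \subsetneq X$ of dimension $< d_X$. Unwinding the cup product, $a^t \cup \eta = q_*\bigl((\Delta_Y \times \id_X)^*(a^t \times \eta)\bigr)$ with $q : Y \times X \to X$ the projection. Chasing support through the three steps — external product to $Y \times Y \times X'$, pullback along $\Delta_Y \times \id_X$ to $Y \times X'$, pushforward along $q$ to $X'$ — yields a representative of $a^t \cup \eta$ supported on $X'$. Since $\dim X' < d_X$, no point of $X_{(d_X)}$ lies on $X'$, so by the pushforward formula (which sends a $d_X$-dimensional point whose image has smaller transcendence degree to zero) the result vanishes already in $C_{d_X}(X;M)$, hence in $A^0(X;M) = A_{d_X}(X;M)$. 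This gives the desired factorization $r' : \CH_0(Y_{F(X)}) \tens A^0(Y;M) \to A^0(X;M)$.

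Composition compatibility for (2) is then obtained in parallel, by running the same support-dimension reasoning inside a diagram with an auxiliary $Z$ analogous to the one used for (1). The main obstacle I foresee is precisely in setting up this transposed diagram, as one has to track supports carefully through an additional pullback-pushforward pair and verify that the resulting pairings agree with the composition in $\RatCor(F)$. Once this is done the identity check handles the remaining axiom, completing the proof of the theorem.
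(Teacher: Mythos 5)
Your part (1) is essentially the paper's: the factorization is exactly Proposition \ref{factor}, and the composition compatibility is read off the diagram with the auxiliary variety $Z$ that follows it (the paper's own proof of (1) is the single sentence ``follows from Proposition \ref{factor}''). For part (2) you take a genuinely different route. The paper does not rerun a support analysis through the definition of the $\cup$-product; instead it restricts the pairing along an open immersion $j\colon U\hookrightarrow Y$ in the variable where the output lives, notes that $j^*\colon A^0(Y;M)\to A^0(U;M)$ is injective (both groups sit inside $M(F(Y))$, being kernels of residue maps, and $U$ has fewer codimension-one points), and chooses $U$ so that the restricted correspondence vanishes; the needed commutativity is just compatibility of $\cup$ with flat pull-back. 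Your argument instead tracks the support of a representative cycle through the three stages of the $\cup$-product --- external product, Gysin pull-back along the diagonal, projection --- and concludes from the dimension count that the final push-forward factors through $A_{d_X}(X';M)=0$. This is correct, but it silently relies on the fact that Rost's refined pull-back along $\Delta_Y\times\mathrm{id}$ carries classes supported on a closed subset to classes supported on its preimage (a base-change property of the deformation-space construction of \cite[\S\S 11--12]{MR1418952}); the paper's localization trick sidesteps that verification entirely. In compensation, your support-chasing computation transposes directly to the diagram with an auxiliary $Z$, so the composition compatibility for (2) comes out of the same calculation, a step the paper leaves implicit. Both mechanisms ultimately exploit the same point, namely that $A^0=A_{\dim}$ sees only the generic point.
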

\begin{proof}
The first statement follows from Proposition \ref{factor}. To
prove the second part consider an irreducible variety $Y$ and an
open subset $j:U\hookrightarrow Y$. For a smooth complete $X$ in
the commutative diagram
\[
\begin{CD}
\CH_{d_Y}(X\times Y)\tens A^0(X;M)   @>{\cup}>>   A^0(Y;M)  \\
@V{(j\tens\id)^*}VV  @VV{j^*}V     \\
\CH_{d_Y}(X\times U)\tens A^0(X;M)    @>{\cup}>>  A^0(U;M)   \\
\end{CD}
\]
the right vertical homomorphism is injective. Hence the pairing in
the top row of the diagram is trivial on the cycles in
$\CH_{d_Y}(X\times Y)$ that are not dominant over $Y$. Thus we
have a well defined pairing
\[
\CH_{d_Y}(X_{F(Y)})\tens A^0(X;M)   \xra{\cup}   A^0(Y;M)
\]
that defines a contravariant functor
\[
X\mapsto A^0(X;M), \ \ \ a\mapsto a^t\cup.\qedhere
\]
\end{proof}

By Theorem \ref{Rmain}, for a cycle module $M$ and a rational correspondence $\alpha:X \rightsquigarrow Y$, we have the two natural
homomorphisms
\[
\alpha_*:=\alpha\cup: A_0(X;M)\to A_0(Y;M)
\]
and
\[
\alpha^*:=\alpha^t\cup: A^0(Y;M)\to A^0(X;M).
\]

Let $f:X\dashrightarrow Y$ be a rational morphism of irreducible varieties. It
defines a rational point of $Y_{F(X)}$ over $F(X)$ and hence a
morphism in $\Mor_{\RatCor}(X,Y)$ that we still denote by $f:X \rightsquigarrow Y$. In fact, the rational
correspondence $f$ is the image of the class of the graph of $f$ under the natural homomorphism
$\CH_{d_X}(X\times Y) \to \CH_0(Y_{F(X)})$.

\begin{lemma}\label{point}
Let $f:X\dashrightarrow Y$ be a rational morphism of smooth complete varieties and let $x\in X$ be a rational point such that
$f(x)$ is defined. Then $f_*([x])=[f(x)]$ in $\CH_0(Y)$.
\end{lemma}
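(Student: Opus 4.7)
The plan is to work directly from the definitions, using the parenthetical remark before the lemma which identifies the rational correspondence $f$ with the image in $\CH_0(Y_{F(X)})$ of the graph class $[\Gamma_f]\in\CH_{d_X}(X\times Y)$. Combined with Proposition \ref{factor}, this identification means that the pushforward $f_*$ on $\CH_0$ is computed via the unfactored $\cup$-product:
\[
f_*([x]) \;=\; [\Gamma_f]\cup [x]
\]
in $\CH_0(Y)=A_0(Y;K_0)$. Thus the problem reduces to a direct computation of this intersection-theoretic $\cup$-product in $\CH_*$.

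Unwinding the definition of $\cup$ from the beginning of Appendix \ref{Rational correspondences} with $Z=\Spec F$, $r=d_X$ and $p=0$, we have
\[
[\Gamma_f]\cup [x] \;=\; q_*\bigl((\Delta\times\id_Y)^*([\Gamma_f]\times[x])\bigr),
\]
where $\Delta\colon X\to X\times X$ is the diagonal and $q\colon X\times Y\to Y$ is the projection, with the tacit permutation of factors placing $[\Gamma_f]\times[x]$ inside $\CH_{d_X}(X\times X\times Y)$. Set-theoretically, $(\Delta\times\id_Y)^{-1}(\Gamma_f\times\{x\})$ consists of those $(u,y)\in X\times Y$ with $u=x$ and $(u,y)\in\Gamma_f$, i.e.\ of $(x,y)\in\Gamma_f$.

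Now I invoke the hypothesis that $f$ is defined at $x$: there is an open $U\subseteq X$ containing $x$ on which $f|_U\colon U\to Y$ is a regular morphism, and $\Gamma_f$ is by definition the closure in $X\times Y$ of its graph. Inside the open neighborhood $U\times Y$ of $(x,f(x))$ in $X\times Y$, the cycle $\Gamma_f$ coincides with the (smooth) graph $\Gamma_{f|_U}$, so the fiber of $\Gamma_f$ over $x$ is the single reduced point $(x,f(x))$. Since $X$ is smooth, the embedding $\{x\}\hookrightarrow X$ is regular, and the graph of a regular morphism meets each fiber $\{u\}\times Y$ transversally at a single point. It follows that $(\Delta\times\id_Y)^*([\Gamma_f]\times[x])$ equals the class of the reduced point $(x,f(x))$ in $\CH_0(X\times Y)$, and applying $q_*$ gives $[f(x)]\in\CH_0(Y)$.

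The only delicate step is bookkeeping: one must match the permutation of factors implicit in the definition of the external product $\times$ and the pullback $(\Delta\times\id_Y)^*$, and verify transversality at $(x,f(x))$. Both are transparent once we restrict to the open $U$, where $\Gamma_f$ is a genuine graph and the classical formula $[\Gamma_g]\cup[x]=[g(x)]$ for a regular morphism $g$ applies verbatim; the reduction to the open $U$ is legitimate because the $\cup$-product is defined on classes in $\CH_{d_X}(X\times Y)$ and $\Gamma_f$ meets $\{x\}\times Y$ only inside $U\times Y$.
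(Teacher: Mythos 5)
Your proof is correct and follows essentially the same route as the paper: unwind the $\cup$-product to $q_*\bigl((\Delta\times\id_Y)^*([x]\times[\Gamma_f])\bigr)$, observe that because $f$ is defined at $x$ the scheme-theoretic preimage is the single reduced point $(x,f(x))$ of the expected dimension, and push forward. The only cosmetic difference is that the paper justifies the multiplicity-one step by citing \cite[Corollary 57.20]{EKM}, whereas you argue it by hand via transversality of the graph over the open locus of definition.
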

\begin{proof}
Let $\Gamma\subset X\times Y$ be the graph of $f$. The preimage of $\{x\}\times\Gamma$ under the morphism
$\Delta_X\times\id_Y:X\times Y\to X\times X\times Y$ is the reduced scheme $\{x\}\times\{f(x)\}$. It follows from
\cite[Corollary 57.20]{EKM} that $(\Delta_X\times\id_Y)^*([x]\times[\Gamma])=[x]\times[f(x)]$ and hence
$f_*([x])=q_*([x]\times[f(x)])=[f(x)]$, where $q:X\times Y\to Y$ is the projection.
\end{proof}

\begin{cor}\label{composition}
Let $f:X\dashrightarrow Y$ and $g:Y\dashrightarrow Z$ be composable rational morphisms of smooth complete varieties and let $h:X\dashrightarrow Z$
be the composition of $f$ and $g$. Then $g\circ f=h$ in $\Mor_{\RatCor}(X,Z)$.
\end{cor}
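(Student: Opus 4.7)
The plan is to lift to integral correspondences via graphs, exploit the functoriality of $\kappa \colon \Cor(F) \to \RatCor(F)$, and then reduce to an intersection-theoretic check on the open subset of $X$ where $h$ is defined.

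By the description of the rational correspondence associated to a rational morphism (given just before Lemma~\ref{point}), $f \in \Mor_{\RatCor(F)}(X,Y) = \CH_0(Y_{F(X)})$ is the image of the graph class $[\Gamma_f] \in \CH_{d_X}(X \times Y) = \Mor_{\Cor(F)}(X,Y)$ under $\kappa$, and similarly $g = \kappa([\Gamma_g])$ and $h = \kappa([\Gamma_h])$. The functoriality of $\kappa$ then yields
\[
g \circ f \;=\; \kappa([\Gamma_g]) \circ \kappa([\Gamma_f]) \;=\; \kappa\bigl([\Gamma_g] \circ [\Gamma_f]\bigr),
\]
with the inner composition taken in $\Cor(F)$. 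Hence it suffices to show that $[\Gamma_g] \circ [\Gamma_f]$ and $[\Gamma_h]$ have the same image in $\CH_0(Z_{F(X)})$, i.e.\ that their difference lies in the kernel of $\kappa$, which by Proposition~\ref{factor} consists of cycles not dominant over $X$.

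Let $U \subseteq X$ be the nonempty dense open subset where $h$ is defined; then $f|_U \colon U \to Y$ is a morphism with image in the domain of $g$, and $h|_U = g \circ f|_U$. I claim both cycles restrict to $[\Gamma_{h|_U}]$ on $U \times Z$: for $[\Gamma_h]$ this is immediate, and for $[\Gamma_g] \circ [\Gamma_f]$ it follows from a direct intersection computation. Indeed, over $U$ the projection $\Gamma_{f|_U} \to U$ is an isomorphism and $\Gamma_g$ coincides with the honest graph of $g$ near each $f(u)$, so the intersection $(\Gamma_f \times Z) \cap (X \times \Gamma_g)$, restricted to $U \times Y \times Z$, is the proper transverse intersection $\{(u, f(u), h(u)) : u \in U\}$, which projects isomorphically onto $\Gamma_{h|_U} \subseteq U \times Z$. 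Therefore the difference $[\Gamma_g] \circ [\Gamma_f] - [\Gamma_h]$ is supported on $(X \setminus U) \times Z$, and hence vanishes in $\CH_0(Z_{F(X)})$ as required.

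The main obstacle is the intersection-theoretic step of the previous paragraph: checking that the $\Cor(F)$-composition of graph cycles literally recovers the graph of the composition over $U$. This boils down to the transversality afforded by $\Gamma_{f|_U}$ being the graph of an honest morphism and the elementary fact that $\Gamma_g$ restricts to the graph of $g$ over the domain of $g$—the same mechanism underpinning the proof of Lemma~\ref{point} (via \cite[Corollary 57.20]{EKM}).
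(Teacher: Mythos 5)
Your proof is correct, but it takes a genuinely different route from the paper's. The paper's own argument is a two-line reduction to Lemma \ref{point}: the composition $g\circ f$ in $\RatCor(F)$ is by definition the image of the class $[y]\in\CH_0(Y_{F(X)})$ of the rational point $y$ of $Y_{F(X)}$ given by $f$ under the action of $g$, and since $g_{F(X)}$ is defined at $y$, Lemma \ref{point} applied over the base field $F(X)$ gives $g_*([y])=[g_{F(X)}(y)]$, which is exactly the class of $h$. You instead lift to the category $\Cor(F)$ of integral correspondences, use the functoriality of $\kappa$ (asserted as ``obvious'' in the paper; its content is the commutative diagrams of Section \ref{crc}), and check by a direct intersection computation that $[\Gamma_g]\circ[\Gamma_f]$ and $[\Gamma_h]$ agree over a dense open $U\subseteq X$; you should take $U$ to be the locus where $f$ is defined and maps into the domain of $g$ (which may be smaller than the domain of definition of $h$), but this is a harmless adjustment. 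The two arguments rest on the same transversality of graphs: your computation over $U$ is the spread-out version of the one in Lemma \ref{point} via \cite[Corollary 57.20]{EKM}, and restricting to the generic fiber over $X$ recovers the statement in $\CH_0(Z_{F(X)})$. The paper's route is shorter because Lemma \ref{point} is already available; yours makes the compatibility with integral correspondences explicit, at the cost of duplicating that computation and of leaning on the functoriality of $\kappa$, which the paper's proof does not need. One small slip: the fact that a class supported off a dense open of $X$ dies in $\CH_0(Z_{F(X)})$ follows from the localization sequence (the generic fiber group is the direct limit of the groups $\CH_{d_X}(U\times Z)$ over dense open $U$), not from Proposition \ref{factor}, which is the analogous vanishing statement for the pairing with $A_0(X;M)$; the fact you need is nonetheless correct and standard.
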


\begin{proof}
Let $y$ be the rational point of $Y_{F(X)}$ corresponding to $f$.
By assumption, the rational morphism $g_{F(X)}:Y_{F(X)}\dashrightarrow
Z_{F(X)}$ is defined at $y$. By Lemma \ref{point}, the composition of correspondences $f$ and $g$ takes
$[y]$ to $[g_{F(X)}(y)]\in \CH_0(Z_{F(X)})$. Note that the latter class
is given by $h$.
\end{proof}

\begin{cor}
For every two composable rational morphisms $f:X\dashrightarrow Y$ and $g:Y\dashrightarrow Z$ of smooth complete varieties, we have
$(g\circ f)_*=g_*\circ f_*$ and $(g\circ f)^*=f^*\circ g^*$.
\end{cor}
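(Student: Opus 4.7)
The plan is to derive this corollary by direct assembly of the two preceding results, Corollary \ref{composition} and Theorem \ref{Rmain}. First, I would note that by the construction introduced just before Lemma \ref{point}, for a rational morphism $f:X\dashrightarrow Y$ of smooth complete varieties, the maps $f_*$ and $f^*$ on $A_0$- and $A^0$-groups are by definition the maps $\alpha_*$ and $\alpha^*$ attached to the rational correspondence $\alpha \in \Mor_{\RatCor(F)}(X,Y)$ which is the image of $f$ under the natural passage from rational morphisms to rational correspondences (sending $f$ to the class of its graph in $\CH_0(Y_{F(X)})$).

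Next, I would invoke Corollary \ref{composition}, which asserts that this passage is compatible with composition: the rational correspondence associated to the composite rational morphism $g\circ f = h$ coincides, as an element of $\Mor_{\RatCor(F)}(X,Z)$, with the composite $g\circ f$ computed in the category $\RatCor(F)$. Finally, Theorem \ref{Rmain} provides a covariant functor $\RatCor(F)\to\Ab$, $X\mapsto A_0(X;M)$, $\alpha\mapsto\alpha_*$, and a contravariant functor $\RatCor(F)\to\Ab$, $X\mapsto A^0(X;M)$, $\alpha\mapsto\alpha^*$. Applying functoriality to the identity of morphisms from Corollary \ref{composition} immediately gives $(g\circ f)_*=g_*\circ f_*$ and $(g\circ f)^*=f^*\circ g^*$. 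There is no real obstacle here, since the substantive content (the definition of composition in $\RatCor(F)$, the functoriality of $A_0$ and $A^0$ on rational correspondences, and the compatibility of rational composition with correspondence composition) has already been established; the corollary is merely an assembling step.
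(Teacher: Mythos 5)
Your proposal is correct and is exactly the argument the paper intends (the paper omits the proof precisely because it is this immediate assembly): Corollary \ref{composition} identifies the rational correspondence of $h=g\circ f$ with the composite of the correspondences of $f$ and $g$ in $\RatCor(F)$, and the functoriality of $A_0$ and $A^0$ from Theorem \ref{Rmain} then yields $(g\circ f)_*=g_*\circ f_*$ and $(g\circ f)^*=f^*\circ g^*$.
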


\begin{cor}[{cf. \cite[Corollary 12.10]{MR1418952}}]
\label{bir inv}
The groups $A_0(X;M)$ and $A^0(X;M)$ are birational invariants of
the smooth complete variety $X$.
\end{cor}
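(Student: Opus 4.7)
The plan is to deduce birational invariance formally from the category-theoretic machinery assembled just above. Given two smooth complete $F$-varieties $X$ and $Y$ that are birationally equivalent, I would first choose mutually inverse birational morphisms $f : X \dashrightarrow Y$ and $g : Y \dashrightarrow X$ (defined on dense open subsets), so that $g \circ f$ and $f \circ g$ are defined as rational morphisms on dense opens and coincide there with $\id_X$ and $\id_Y$ respectively. As rational morphisms, therefore, $g \circ f = \id_X$ and $f \circ g = \id_Y$.

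By Corollary \ref{composition}, passing to the category $\RatCor(F)$ preserves these equalities: the rational correspondences $f : X \rightsquigarrow Y$ and $g : Y \rightsquigarrow X$ satisfy $g \circ f = \id_X$ and $f \circ g = \id_Y$ in $\Mor_{\RatCor(F)}$. Here the identity morphism in $\RatCor(F)$ is represented by the class $[\xi_X] \in \CH_0(X_{F(X)})$ of the generic point of $X$, which is precisely the class of the diagonal under the surjection $\CH_{d_X}(X \times X) \to \CH_0(X_{F(X)})$, so $\id_X$ as a rational morphism and as the identity of $\RatCor(F)$ agree. Applying the covariant functor $A_0(-;M)$ from Theorem \ref{Rmain}, I obtain mutually inverse homomorphisms $f_* : A_0(X;M) \to A_0(Y;M)$ and $g_* : A_0(Y;M) \to A_0(X;M)$; applying the contravariant functor $A^0(-;M)$ gives mutually inverse $f^* : A^0(Y;M) \to A^0(X;M)$ and $g^*$. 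Both pairs are thus isomorphisms, which is the assertion.

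The only subtle point, and really the only work, is the identification of the identity morphism in $\RatCor(F)$ with the correspondence coming from $\id_X : X \dashrightarrow X$. This is immediate from the definition: the composition law on $\RatCor(F)$ was set up via the pairing $\CH_0(Y_{F(X)}) \otimes \CH_0(X_{F(Z)}) \to \CH_0(Y_{F(Z)})$ derived from the $\cup$-product, and the class $[\xi_X]$ is the unit for this pairing (as already exploited in Lemma \ref{xi v xi} and Corollary \ref{cor xi v xi}). So no further computation is needed — the corollary is essentially a formal consequence of functoriality, with Corollary \ref{composition} doing all the real work.
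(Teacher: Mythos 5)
Your argument is correct and is exactly the intended one: pick mutually inverse rational maps $f$ and $g$, note $g\circ f=\id_X$ and $f\circ g=\id_Y$ as rational maps, and apply Corollary \ref{composition} together with the functors of Theorem \ref{Rmain} to get mutually inverse maps on $A_0$ and $A^0$. The identification of $\id_X$ with the class of the generic point (the unit of the composition pairing) is also handled correctly, so nothing is missing.
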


\begin{prop}\label{trans}
Let
\[
\xymatrix{X' \ar@{-->}[d]_{\beta}\ar@{-->}[r]^{\alpha}  & X \ar@{-->}[d]^{\gamma} \\
Y' \ar@{-->}[r]^{\delta}  & Y }
\]
be a commutative diagram of dominant rational morphisms of smooth complete irreducible varieties
with $\dim (X)=\dim (Y)$ and $\dim (X')=\dim (Y')$. Suppose that the natural ring homomorphism $F(X)\tens_{F(Y)} F(Y')\to F(X')$
is an isomorphism. Then $\gamma^t\circ\delta=\alpha\circ\beta^t$.
\end{prop}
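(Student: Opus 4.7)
The plan is to compute both sides of the equality as $0$-cycle classes in $\Mor_{\RatCor(F)}(Y', X) = \CH_0(X_{F(Y')})$. I will lift each of the four correspondences $\gamma^t, \delta, \alpha, \beta^t$ to the integral correspondence given by the (transposed) graph, and use that the functor $\kappa\colon \Cor(F) \to \RatCor(F)$ respects compositions, so that it suffices to compare the two integral compositions in $\CH_{\dim Y'}(Y' \times X)$ after restriction to the generic fiber over $Y'$.

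For $\gamma^t \circ \delta$, the integral composition $(p_{13})_*\bigl((p_{12})^*[\Gamma_\delta] \cdot (p_{23})^*[\Gamma_\gamma^t]\bigr)$ in $Y' \times Y \times X$ is supported on the locus $\{(y',y,x) : \delta(y') = y = \gamma(x)\}$, i.e.\ the fiber product $Y' \times_Y X$ realized via the diagonal embedding; pushing forward by $p_{13}$ yields the cycle of $Y' \times_Y X$ inside $Y' \times X$ (modulo components not dominating $Y'$, which are irrelevant when passing to the generic fiber). Similarly, $\alpha \circ \beta^t$ lifts to a cycle in $Y' \times X' \times X$ supported on $\{(\beta(x'), x', \alpha(x')) : x' \in X'\}$, whose pushforward by $p_{13}$ is the cycle of the image $(\beta, \alpha)(X') \subseteq Y' \times X$.

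The hypothesis $F(X) \otimes_{F(Y)} F(Y') \xrightarrow{\sim} F(X')$ is then used to match the two cycles: it forces $Y' \times_Y X$ to have a unique irreducible component $Z$ dominating $Y'$, with function field $F(X')$, and the induced map $(\beta,\alpha)\colon X' \dashrightarrow Z$ is birational. Thus both integral compositions agree on $Z$ with multiplicity one (the intersections are generically transverse inside smooth ambient varieties, by a dimension count), and their generic fibers over $Y'$ are both equal to the closed point $\Spec F(X') \hookrightarrow X_{F(Y')}$ determined by the compatible embeddings $\alpha^*\colon F(X) \hookrightarrow F(X')$ and $\beta^*\colon F(Y') \hookrightarrow F(X')$ (equivalently, by $\gamma^*$ and $\delta^*$ via the commutative square).

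The main obstacle is the bookkeeping of components and multiplicities of the fiber product $Y' \times_Y X$ and of the image $(\beta,\alpha)(X')$: in principle these schemes may carry extraneous components and non-reduced structure. The function-field hypothesis circumvents these issues by ensuring that the component dominating $Y'$ is unique, reduced, and appears with multiplicity one in both computations, so that the two rational correspondences agree in $\CH_0(X_{F(Y')})$.
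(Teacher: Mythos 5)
Your argument is correct in outline but runs at a different level from the paper's. You lift everything to integral correspondences and compute the two compositions as explicit cycles on $Y'\times Y\times X$ and $Y'\times X'\times X$, controlling the components and multiplicities of the graph-closure intersections before passing to the generic fibre over $Y'$. The paper never does this bookkeeping: it stays inside the category of rational correspondences, representing $\gamma^t$ and $\beta^t$ by the generic fibres $\{x\}=\operatorname{Spec}F(X)\subset X_{F(Y)}$ and $\{x'\}=\operatorname{Spec}F(X')\subset X'_{F(Y')}$, and then computes $\gamma^t\circ\delta$ as the pull-back of $[x]$ along $X_{F(Y')}\to X_{F(Y)}$ (citing \cite[Proposition 62.4(2)]{EKM}) and $\alpha\circ\beta^t$ as $\alpha_*[x']$ (via Lemma \ref{point}); the hypothesis enters only through the observation that the square formed by $\{x\}$, $\{x'\}$, $X_{F(Y')}$ and $X_{F(Y)}$ is cartesian. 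The two proofs share the same pivot --- both sides are identified with the class of the closed point $\operatorname{Spec}F(X')$ of $X_{F(Y')}$, and the tensor-product hypothesis is what identifies the relevant fibre product with $\operatorname{Spec}F(X')$ --- but the paper's route buys complete freedom from multiplicity analysis, while yours is more self-contained in that it does not invoke the base-change compatibility of \cite{EKM}.

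One step you should tighten: ``the intersections are generically transverse \ldots by a dimension count'' is not a valid inference. A dimension count only shows that the component dominating $Y'$ has the expected dimension (properness of the intersection there), not that it occurs with multiplicity one. What actually gives multiplicity one is the hypothesis itself: over the generic point of $Y'$ the scheme-theoretic intersection is $\operatorname{Spec}\bigl(F(X)\otimes_{F(Y)}F(Y')\bigr)$, which by assumption is a field, hence reduced of length one, and this forces both the reducedness and the multiplicity-one assertions of your final paragraph (likewise, it is exactly this isomorphism that makes $(\beta,\alpha)\colon X'\dashrightarrow Z$ birational rather than merely generically finite). As written, the transversality claim is justified by the wrong reason; once it is replaced by this computation the argument closes up.
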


\begin{proof}
The generic fibers of the dominant rational morphisms $\gamma$ and $\beta$ are the single point schemes $\{x\}$ and $\{x'\}$
respectively. We have the following diagram:
\[
\xymatrix{\{x'\} \ar@{^{(}->}[d] \ar@{=}[r] & \{x'\}  \ar@{^{(}->}[d]\ar[r] & \{x\}
\ar@{^{(}->}[d] \\
X'_{F(Y')} \ar[d] \ar[r] & X_{F(Y')} \ar[d] \ar[r] &  X_{F(Y)} \ar[d]  \\
\Spec F(Y') \ar@{=}[r] &   \Spec F(Y') \ar[r] & \Spec F(Y).  }
\]
Note that as schemes, $\{x\}=\Spec F(X)$ and $\{x'\}=\Spec F(X')$. It follows from the assumption that the right
part of the diagram is cartesian and hence so is the top right square. In particular, $\{x'\}$ is closed in both
$X'_{F(Y')}$ and $X_{F(Y')}$.

By \cite[Proposition 62.4(2)]{EKM}, the composition  $\gamma^t\circ\delta$ is equal to the image of $[x]$ under the pull-back
homomorphism $\CH_0(X_{F(Y)})\to \CH_0(X_{F(Y')})$ and hence is equal to $[x']$ as $\{x'\}$ is the fiber product of $\{x\}$
and $X_{F(Y')}$ over $X_{F(Y)}$.

The composition $\alpha\circ\beta^t$ is equal to the image of $[x']$ under the push-forward homomorphism
$\alpha_*:\CH_0(X'_{F(Y')})\to \CH_0(X_{F(Y')})$. The rational map $\alpha_{F(Y')}:X'_{F(Y')}\to X_{F(Y')}$ is defined at
$x'$ and $\alpha_{F(Y')}(x')=x'$ as $x'$ is the closed point in both $X'_{F(Y')}$ and $X_{F(Y')}$. It follows
from Lemma \ref{point} that $\alpha_*([x'])=[x']$
in $\CH_0(X_{F(Y')})$.
\end{proof}

}

{

%renew

\renewcommand{\(}{\bigl(}
\renewcommand{\)}{\bigr)}

%simple

\newcommand{\m}{^{\times}}
\newcommand{\ff}{F^{\times}}
\newcommand{\fs}{F^{\times 2}}
\newcommand{\llg}{\longrightarrow}
\newcommand{\tens}{\otimes}
\newcommand{\inv}{^{-1}}

%stackrel

\newcommand{\Dfn}{\stackrel{\mathrm{def}}{=}}
\newcommand{\iso}{\stackrel{\sim}{\to}}

%mathrm

\newcommand{\mult}{\mathrm{mult}}
\newcommand{\sep}{\mathrm{sep}}
\newcommand{\id}{\mathrm{id}}
\newcommand{\diag}{\mathrm{diag}}
\newcommand{\op}{^{\mathrm{op}}}

%operator

\newcommand{\ihom}{\mathcal{H}om}
\newcommand{\CH}{\operatorname{CH}}
\newcommand{\F}{\operatorname{F}}
\renewcommand{\Im}{\operatorname{Im}}
\newcommand{\Ad}{\operatorname{Ad}}
\newcommand{\NN}{\operatorname{N}}
\newcommand{\Ker}{\operatorname{Ker}}
\newcommand{\Pic}{\operatorname{Pic}}
\newcommand{\Tor}{\operatorname{Tor}}
\newcommand{\Lie}{\operatorname{Lie}}
\newcommand{\ind}{\operatorname{ind}}
\newcommand{\ch}{\operatorname{char}}
\newcommand{\Inv}{\operatorname{Inv}}
\newcommand{\coker}{\operatorname{Coker}}
\newcommand{\res}{\operatorname{res}}
\newcommand{\Br}{\operatorname{Br}}
\newcommand{\Spec}{\operatorname{Spec}}
\newcommand{\SK}{\operatorname{SK}}
\newcommand{\Gal}{\operatorname{Gal}}
\newcommand{\SL}{\operatorname{SL}}
\newcommand{\GL}{\operatorname{GL}}
\newcommand{\gSL}{\operatorname{\mathbf{SL}}}
\newcommand{\DM}{\operatorname{\mathbf{DM}}}
\newcommand{\gO}{\operatorname{\mathbf{O}}}
\newcommand{\gGL}{\operatorname{\mathbf{GL}}}
\newcommand{\gPGU}{\operatorname{\mathbf{PGU}}}
\newcommand{\gSpin}{\operatorname{\mathbf{Spin}}}
\newcommand{\End}{\operatorname{End}}
\newcommand{\Hom}{\operatorname{Hom}}
\newcommand{\Aut}{\operatorname{Aut}}
\newcommand{\Mor}{\operatorname{Mor}}
\newcommand{\Map}{\operatorname{Map}}
\newcommand{\Sym}{\operatorname{S}}
\newcommand{\Ext}{\operatorname{Ext}}
\newcommand{\Nrd}{\operatorname{Nrd}}
\newcommand{\ord}{\operatorname{ord}}
\newcommand{\ra}{\rightarrow}
\newcommand{\xra}{\xrightarrow}

%bb

\newcommand{\A}{\mathbb{A}}
\renewcommand{\P}{\mathbb{P}}
\newcommand{\p}{\mathbb{P}}
\newcommand{\Z}{\mathbb{Z}}
\newcommand{\z}{\mathbb{Z}}
\newcommand{\N}{\mathbb{N}}
\newcommand{\Q}{\mathbb{Q}}
\newcommand{\QZ}{\mathop{\mathbb{Q}/\mathbb{Z}}}
\newcommand{\gm}{\mathbb{G}_m}
\newcommand{\hh}{\mathbb{H}}

%cal
\newcommand{\cA}{\mathcal A}
\newcommand{\cB}{\mathcal B}
\newcommand{\cF}{\mathcal F}
\renewcommand{\cH}{\mathcal H}
\newcommand{\cJ}{\mathcal J}
\newcommand{\cM}{\mathcal M}
\newcommand{\cN}{\mathcal N}
\newcommand{\cO}{\mathcal O}
\renewcommand{\cR}{\mathcal R}
\newcommand{\cS}{\mathcal S}
\newcommand{\cX}{\mathcal X}
\newcommand{\cU}{\mathcal U}
\renewcommand{\cL}{\mathcal L}

%frak

\newcommand{\falg}{F\mbox{-}\mathfrak{alg}}
\newcommand{\fgroups}{F\mbox{-}\mathfrak{groups}}
\newcommand{\fields}{F\mbox{-}\mathfrak{fields}}
\newcommand{\groups}{\mathfrak{Groups}}
\newcommand{\abelian}{\mathfrak{Ab}}

\appendix
\addtocounter{section}{12}
\renewcommand{\thesection}{RM}

\section
{Chow groups of Rost motives}
\label
{Chow groups of Rost motives}

We assume that $\ch F=0$ here.

\subsection{The binary motive}

Let $n$ be a positive integer, $p$ a prime integer and $s$ a symbol in $H^{n+1}(F,\mu_p^{\tens n})$. Set
\begin{align*}
b&=(p^{n}-1)/(p-1)=1+p+\dots+p^{n-1},\\
c&=(p^{n+1}-1)/(p-1)=1+p+\dots+p^{n}=bp+1=b+p^{n},\\
d&=p^{n}-1=b(p-1)=c-b-1.
\end{align*}

Let $\cX$ be the object in the triangulated category of motivic complexes $\DM(F,\Z)$
given by the simplicial scheme of a norm variety of $s$.
(Sometimes we will write $\cX$ as well for the corresponding object in
$\DM(F,\Z_{(p)})$.)
Write $Q_i$ for the Milnor operation
in the motivic cohomology of bidegree $(2p^i-1,p^i-1)$ (see \cite{MR2811603}) and set
\[
\mu=(Q_1\circ Q_2\circ\dots\circ   Q_{n-1})(\delta)\in H^{2b+1,b}(\cX,\Z)
\]
and
\[
\gamma=(Q_1\circ Q_2\circ\dots\circ  Q_{n-1}\circ Q_{n})(\delta)=\pm Q_{n-1}(\mu)\in
H^{2c,c-1}(\cX,\Z),
\]
where $\delta\in H^{n+2,n}(\cX,\Z)$ is the element
corresponding to the symbol $s$ (see \cite{MR2645334}).

The \emph{binary motive $\cM$ of $s$} is defined by the exact triangle
\[
\cX(b)[2b]\xra{x} \cM\xra{y} \cX\xra{\mu}\cX(b)[2b+1]
\]
in $\DM(F,\Z)$.

\subsection
{Symmetric powers}

As in \cite{MR2811603}, consider the symmetric powers $\Sym^i(\cM)$ for $i=0,1,\dots, p-1$
in $\DM(F,\Z_{(p)})$ of the binary motive $\cM$. There are the morphisms
\[
a_i:\Sym^i(\cM)\to \Sym^{i-1}(\cM)\tens \cM \quad\text{and}\quad b_i:\Sym^{i-1}(\cM)\tens\cM\to\Sym^i(\cM),
\]
defined by
$$
a_i(m_1\dots m_i)=\sum_{j=1}^{i}m_1\dots \hat m_j\dots m_i\tens m_j\;\text{ and }\;
b_i(m_1\dots m_{i-1}\tens m)=
m_1\dots m_{i-1}m.
$$

Consider the compositions:
\begin{align*}
x_i:&\ \Sym^{i-1}(\cM)(b)[2b]\xra{1\tens x}\Sym^{i-1}(\cM)\tens \cM\xra{b_i}\Sym^{i}(\cM),\\
y_i:&\ \Sym^{i}(\cM)\xra{a_i}\Sym^{i-1}(\cM)\tens \cM\xra{1\tens y}\Sym^{i-1}(\cM).
\end{align*}
We have $x_1=x$ and $y_1=y$. Set
\[
r_i=\Sym^i(y):\Sym^i(\cM)\to\cX,
\]
so $r_1=y$.

The following lemma can be checked by a direct computation:
\begin{lemma}\label{somesome}
For every $i=2,\dots, p-1$,

\noindent {\rm (1)} $y_i\circ b_{i-1}-b_{i-2}\circ (y_{i-1}\tens \id_{\cM})=\id_{\Sym^{i-1}(\cM)}\tens y$,

\noindent {\rm (2)} $r_{i-1}\circ y_i=i\cdot r_i$,

\noindent {\rm (3)} $y_1y_2\dots y_i=i!\cdot r_i$.
\end{lemma}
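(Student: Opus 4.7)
The plan is to observe that all three identities are formal identities about the natural comultiplication/multiplication maps on the symmetric algebra, interpreted in the additive $\mathbb{Z}_{(p)}$-linear tensor category $\DM(F,\Z_{(p)})$ (where $\Sym^i$, $i<p$, is realized via the symmetrization projector on $\cM^{\otimes i}$). So it suffices to compute termwise on a symmetric generator $m_1\cdots m_i$, treating the formulas for $a_i$, $b_i$, $y$ as symbolic operations on such tensors.

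For identity (1), I would evaluate both sides on $m_1\cdots m_{i-1}\otimes m$. The composition $y_i\circ b_{i-1}$ first multiplies the factors into $m_1\cdots m_{i-1}m\in\Sym^i(\cM)$ and then $y_i=(1\otimes y)\circ a_i$ sums over which of the $i$ factors gets removed and sent through $y$. Splitting this sum according to whether the removed factor is $m$ or one of $m_1,\dots,m_{i-1}$ gives
\[
y_i(m_1\cdots m_{i-1}m)=\Sum_{j=1}^{i-1}\bigl(m_1\cdots\hat m_j\cdots m_{i-1}\bigr)m\otimes y(m_j)\;+\;m_1\cdots m_{i-1}\otimes y(m).
\]
The first sum is exactly $b_{i-2}\circ(y_{i-1}\otimes\id_\cM)$ applied to $m_1\cdots m_{i-1}\otimes m$: the map $y_{i-1}\otimes\id_\cM$ produces $\sum_j m_1\cdots\hat m_j\cdots m_{i-1}\otimes y(m_j)\otimes m$ (up to reordering the outer factors), and $b_{i-2}$ then multiplies the $\cM$-entry back into the symmetric part. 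The leftover second summand is precisely $(\id_{\Sym^{i-1}(\cM)}\otimes y)(m_1\cdots m_{i-1}\otimes m)$, which gives (1).

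For identity (2), I would simply expand:
\[
r_{i-1}\bigl(y_i(m_1\cdots m_i)\bigr)=\Sum_{j=1}^{i}r_{i-1}\bigl(m_1\cdots\hat m_j\cdots m_i\bigr)\cdot y(m_j)=\Sum_{j=1}^{i}\Prod_{k\ne j}y(m_k)\cdot y(m_j),
\]
and since each of the $i$ summands equals the single product $\prod_k y(m_k)=r_i(m_1\cdots m_i)$, the total is $i\cdot r_i(m_1\cdots m_i)$. Identity (3) then follows by an immediate induction on $i$: assuming $y_1\cdots y_{i-1}=(i-1)!\,r_{i-1}$, we get $y_1\cdots y_i=(y_1\cdots y_{i-1})\circ y_i=(i-1)!\,r_{i-1}\circ y_i=(i-1)!\cdot i\cdot r_i=i!\cdot r_i$ by (2), with the base case $y_1=r_1=y$ tautological.

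The only real obstacle is bookkeeping: one must keep the $S_i$-symmetrization implicit throughout (so that the ``pull off the $j$th factor'' sums make sense inside $\Sym^\bullet(\cM)$), and make sure the index shifts in $b_\bullet$ match those in the statement. Once the notation is pinned down, no nontrivial motivic input is used, and the three identities are purely combinatorial consequences of the product/coproduct structure on the symmetric algebra of $\cM$.
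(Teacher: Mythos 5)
Your computation is exactly the ``direct computation'' that the paper invokes without writing out: you expand $a_i$, $b_i$, $y_i$, $r_i$ on symbolic symmetric tensors (legitimately, since for $i\le p-1$ the symmetric power is cut out of $\cM^{\otimes i}$ by the symmetrization idempotent over $\Z_{(p)}$), and the termwise bookkeeping for (1) and (2) plus the induction for (3) are all correct. You are also right that the indices of $b_\bullet$ in the printed statement need the implicit shift you adopt (your ``$b_{i-1}$'' is the map $\Sym^{i-1}(\cM)\otimes\cM\to\Sym^{i}(\cM)$, which is $b_i$ in the paper's labelling) for the compositions to be defined; with that understood, the proof is complete and matches the paper's intent.
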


\begin{corollary}\label{manyi}
The diagram
\[
\begin{CD}
\Sym^{i-1}(\cM)(b)[2b]   @>{x_i}>> \Sym^{i}(\cM)   @>{r_i}>> \cX  \\
@V{y_{i-1}(b)[2b]}VV  @V{y_i}VV    @VV{\cdot i}V   \\
\Sym^{i-2}(\cM)(b)[2b]   @>{x_{i-1}}>> \Sym^{i-1}(\cM)   @>{r_{i-1}}>> \cX
\end{CD}
\]
is commutative.
\end{corollary}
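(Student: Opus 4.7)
The diagram decomposes into two squares, and I would treat them separately. The right-hand square asserts $r_{i-1}\circ y_i = i\cdot r_i$, which is \emph{exactly} part~(2) of Lemma~\ref{somesome}, so there is nothing to prove there beyond invoking the lemma.

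For the left-hand square, the plan is to unfold the definitions of $x_i$ and $y_i$ as compositions and apply the Leibniz-style identity of Lemma~\ref{somesome}(1). By definition,
\[
x_i = b_i\circ(\id_{\Sym^{i-1}(\cM)}\tens x),\qquad
y_i = (\id_{\Sym^{i-1}(\cM)}\tens y)\circ a_i,
\]
so
\[
y_i\circ x_i \;=\; (\id\tens y)\circ a_i\circ b_i\circ(\id\tens x).
\]
Lemma~\ref{somesome}(1) (read as an identity for $y_i\circ b_i$ on $\Sym^{i-1}(\cM)\tens\cM$) expresses $y_i\circ b_i$ as the sum of $b_{i-1}\circ(y_{i-1}\tens\id_\cM)$ and $\id_{\Sym^{i-1}(\cM)}\tens y$. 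Substituting and distributing over $\id\tens x$ yields two summands. The first one rearranges, using that tensor factors in different slots commute, to
\[
b_{i-1}\circ(\id\tens x)\circ(y_{i-1}(b)[2b]\tens\id) \;=\; x_{i-1}\circ y_{i-1}(b)[2b],
\]
which is the desired right-hand side of the commutativity statement. The second summand equals $\id_{\Sym^{i-1}(\cM)}\tens(y\circ x)$, and $y\circ x=0$ since $x$ and $y$ are consecutive morphisms in the distinguished triangle defining $\cM$.

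I do not expect any serious obstacle: the whole argument is driven by Lemma~\ref{somesome} together with the triangle axiom $y\circ x=0$. The only care needed is bookkeeping of Tate twists and shifts (all terms live in categories where $(b)[2b]$ can be pulled through tensor products) and verifying the precise indexing in Lemma~\ref{somesome}(1) so that the two composites on $\Sym^{i-1}(\cM)\tens\cM$ really match the two summands produced above. Once this bookkeeping is done, the commutativity of the left square is a one-line computation.
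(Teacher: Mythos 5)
Your proof is correct and is precisely the argument the paper intends: the right-hand square is Lemma \ref{somesome}(2) verbatim, and the left-hand square follows by composing the Leibniz identity of Lemma \ref{somesome}(1) with $\mathrm{id}\otimes x$ and using $y\circ x=0$ from the exact triangle defining the binary motive. Your reading of part (1) as an identity for $y_i\circ b_i$ on $\mathrm{S}^{i-1}(\mathcal{M})\otimes\mathcal{M}$ (the printed indices $y_i\circ b_{i-1}$ do not typecheck) is the correct one, and the twist/shift bookkeeping you describe goes through without incident.
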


Consider the following objects
$\cS=\Sym^{p-2}(\cM)$ and $\cR=\Sym^{p-1}(\cM)$ in $\DM(F,\Z_{(p)})$.
By \cite[\S5--\S6]{MR2811603},
the motive $\cR=\cR_s$ is isomorphic to a Chow motive living on a norm
variety of $s$ (this is the only place where we need characteristic $0$).
Since over any field extension of $F$ killing the symbol,
the element $\delta$ is trivial, the element $\mu$ is also trivial so that
the motive $\cM$ is isomorphic to $\Z_{(p)}\oplus\Z_{(p)}(b)$ and
the motive $\cR$
is isomorphic to the direct sum
$\Z_{(p)}\oplus\Z_{(p)}(b)\oplus\dots\oplus\Z_{(p)}(d)$.
It follows that $\cR$ is the
{\em Rost motive} of the symbol $s$ (as defined in Section \ref{Generic splitting
varieties}).

Consider the morphism $s=\frac{1}{(p-2)!}y_2\dots y_{p-1}:\cR\to \cM$. Taking the compositions of the
diagrams in Corollary \ref{manyi} and dividing out $(p-2)!$, we have:

\begin{lemma}\label{ccom}
The diagram
\[
\begin{CD}
\cS(b)[2b]   @>{x_{p-1}}>> \cR   @>{r_{p-1}}>> \cX  \\
@V{r_{p-2}(b)[2b]}VV  @V{s}VV    @VV{\cdot (p-1)}V   \\
\cX(b)[2b]   @>{x}>> \cM   @>{y}>> \cX
\end{CD}
\]
is commutative.
\end{lemma}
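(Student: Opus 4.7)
The plan is to obtain the desired $3\times 2$ commutative diagram by stacking the diagrams of Corollary~\ref{manyi} for $i = p-1, p-2, \dots, 2$ on top of each other and then dividing out the common factor $(p-2)!$, which is a unit in $\Z_{(p)}$.

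More precisely, Corollary~\ref{manyi} applied to $i$ gives a commutative square whose bottom row is the top row of the same corollary applied to $i-1$. Pasting these together for $i = p-1, p-2, \dots, 2$ yields one tall commutative diagram whose top row is $\cS(b)[2b] \xra{x_{p-1}} \cR \xra{r_{p-1}} \cX$ and whose bottom row, under the identifications $\Sym^0(\cM) = \cX$, $\Sym^1(\cM) = \cM$, $x_1 = x$, $r_1 = y$, is $\cX(b)[2b] \xra{x} \cM \xra{y} \cX$. These are precisely the top and bottom rows of the diagram to be verified.

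The next step is to identify the three vertical compositions in this stacked diagram. The right column is the product of the scalar multiplications by $p-1, p-2, \dots, 2$, which equals $(p-1)!$. The middle column is the composition $y_2 \circ y_3 \circ \cdots \circ y_{p-1}\colon \cR \to \cM$, which equals $(p-2)!\cdot s$ by the very definition of $s$. The left column is the composition $y_1 \circ y_2 \circ \cdots \circ y_{p-2}$, all Tate-twisted by $(b)[2b]$; by Lemma~\ref{somesome}(3) this equals $(p-2)! \cdot r_{p-2}$, twisted by $(b)[2b]$.

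Finally, since $(p-2)!$ is coprime to $p$, it is invertible in $\Z_{(p)}$, so one may divide each of the three vertical arrows by $(p-2)!$. The left vertical becomes $r_{p-2}(b)[2b]$, the middle becomes $s$, and the right becomes $(p-1)!/(p-2)! = p-1$, yielding exactly the diagram of Lemma~\ref{ccom}. There is no real obstacle here beyond carefully bookkeeping the indices and the $(b)[2b]$-twists; the content of the proof is entirely contained in Corollary~\ref{manyi} and the factorial identity of Lemma~\ref{somesome}(3), together with the invertibility of $(p-2)!$ in $\Z_{(p)}$.
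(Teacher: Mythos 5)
Your proposal is correct and is essentially identical to the paper's own (very terse) proof: the authors likewise obtain the diagram by composing the squares of Corollary~\ref{manyi} for $i=p-1,\dots,2$ and dividing out $(p-2)!$, with the identification of the left column via Lemma~\ref{somesome}(3) and of the middle column via the definition of $s$. Your write-up just makes the bookkeeping explicit.
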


There are exact triangles \cite[(5.5) and (5.6)]{MR2811603} in
$\DM(F,\Z_{(p)})$:
\begin{equation}\label{first}
\cX(d)[2d]\xra{\Sym^{p-1}(x)} \cR\xra{y_{p-1}} \cS\ra \cX(d)[2d+1],
\end{equation}
\begin{equation}\label{second}
\cS(b)[2b]\xra{x_{p-1}} \cR\xra{r_{p-1}} \cX\ra\cS(b)[2b+1].
\end{equation}

\subsection
{Chow groups of Rost motives}

For $m\in\Z$, let $K_m^s(F)$ be the factor group of Milnor's $K$-group $K_m(F)$ by the subgroup generated by
the images of the norm homomorphisms
$K_m(L)\ra K_m(F)$
over all finite field extensions $L/F$ such that $s$ vanishes over $L$.

By \cite[Theorem 1.15]{MR2645334}, a nontrivial element $\alpha$
of the motivic cohomology group
$$
H^{i,j}(\cX,\Z):=H^i\big(\cX,\Z(j)\big)
$$
with $i> j$ can be uniquely written in
the form
\[
\alpha=x\gamma^k (Q_1^{\varepsilon_1}\circ Q_2^{\varepsilon_2}\circ \cdots\circ
Q_{n}^{\varepsilon_{n}})(\delta)=x\gamma^k Q^{\varepsilon}(\delta),
\]
where $x\in K_m^s(F)$ and $k$, $\varepsilon_i$ are integers such that
$k\geq 0$ and $\varepsilon_i=0$ or $1$. We have
\[
j=m+(c-1)k+\sum\varepsilon_k(p^k-1)+n,
\]
\begin{equation}\label{ququ}
w(\alpha)=:2j-i=m-2k-|\varepsilon|+(n-2),
\end{equation}
where $|\varepsilon|=\sum\varepsilon_k$.

Note that if $j\leq d$, then $k=0$ and $\varepsilon_{n}=0$.

\begin{lemma}\label{chowplus}
Let $0\leq j\leq d$. Then
\[
H^{2j+1,j}(\cX,\Z)=
\left\{
  \begin{array}{ll}
    (\Z/p\Z)\mu, & \hbox{if $j=b$;} \\
    0, & \hbox{otherwise.} \\
 \end{array}
\right.
\]
\end{lemma}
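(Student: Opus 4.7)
The plan is to apply the structural description of $H^{i,j}(\cX,\Z)$ recalled from \cite[Theorem 1.15]{MR2645334} and solve the resulting system of constraints for the triple $(m,k,\varepsilon)$. Set $i=2j+1$, so the weight is
$w(\alpha)=2j-i=-1.$
Since $j\leq d$, the parenthetical remark after the quoted theorem forces $k=0$ and $\varepsilon_n=0$, so $|\varepsilon|=\sum_{i=1}^{n-1}\varepsilon_i\leq n-1$.

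Substituting $k=0$ and $w=-1$ into formula \eqref{ququ} gives
$m=|\varepsilon|-(n-1)\leq 0.$
By definition, $K_m^s(F)=0$ for $m<0$, so any nonzero $\alpha$ must have $m=0$, forcing $|\varepsilon|=n-1$. Combined with $\varepsilon_n=0$ and $\varepsilon_i\in\{0,1\}$, this uniquely determines $\varepsilon_1=\varepsilon_2=\dots=\varepsilon_{n-1}=1$. The constraint on $j$ then reads
$j=0+\sum_{i=1}^{n-1}(p^i-1)+n=(1+p+\dots+p^{n-1})=b,$
so there are no solutions unless $j=b$, proving the vanishing half of the lemma.

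In the remaining case $j=b$, any nontrivial $\alpha$ is of the form $x\cdot(Q_1\circ\dots\circ Q_{n-1})(\delta)=x\mu$ with $x\in K_0^s(F)$. To finish, I need to identify $K_0^s(F)=\Z/p\Z$ when $s\neq 0$ (and $K_0^s(F)=0$ when $s=0$, in which case $\mu=0$ anyway, so the formula is consistent). This is immediate from the definition of $K_m^s$: the subgroup of $K_0(F)=\Z$ generated by the norms $[L:F]$ over all extensions $L/F$ killing $s$ contains $p$ (any closed point of a norm variety of $s$ yields such an $L$ with $p\mid [L:F]$, and the existence of one of degree exactly $p$ or of degrees generating $p\Z$ is standard), while containing no element of $\Z\setminus p\Z$ because $s\neq 0$ over $F$ itself.

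The whole argument is essentially a bookkeeping exercise on the structural basis of the motivic cohomology of $\cX$; no step looks subtle. The only point requiring care is the identification $K_0^s(F)=\Z/p\Z$, which is not numerical but conceptual: it uses that degrees of closed points of a $p$-generic splitting variety of a nontrivial symbol generate the ideal $p\Z\subset\Z$. This is the only place where the nontriviality of $s$ (implicit in working with the norm variety $\cX$ in a nondegenerate way) enters the argument.
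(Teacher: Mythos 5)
Your proof is correct and follows essentially the same route as the paper: apply the structure theorem of \cite{MR2645334} together with the weight formula (\ref{ququ}), use $j\leq d$ to force $k=0$ and $\varepsilon_n=0$, and conclude $m=0$ and $\varepsilon_1=\dots=\varepsilon_{n-1}=1$, hence $\alpha\in K_0^s(F)\mu$. You additionally spell out two points the paper leaves implicit (that the resulting $j$ necessarily equals $b$, and that $K_0^s(F)=\Z/p\Z$ for a nontrivial symbol); both are handled correctly.
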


\begin{proof}
Let $\alpha=x\cdot Q^\varepsilon(\delta)\in H^{2j+1,j}(\cX,\Z)$, where $x\in K_m^s(F)$.
Recall that $\varepsilon_{n}=0$, hence
$|\varepsilon|\leq n-1$. By (\ref{ququ}),
\[
-1=w(\alpha)=m-|\varepsilon|+(n-2)
\]
and therefore,
\[
n-1\geq |\varepsilon| =m+(n-1).
\]
It follows that $m=0$ and $Q^\varepsilon(\delta)=
(Q_1\circ Q_2\circ\dots\circ   Q_{n-1})(\delta)=\mu$.
\end{proof}

For every $i=1,2,\dots, n-1$ set
\[
\widetilde Q_i=Q_1\circ\cdots\circ\widehat Q_i\circ\cdots\circ Q_{n-1}.
\]
Note that $\widetilde Q_i(\delta)\in H^{2(b-p^i+1),b-p^i+1}(\cX,\Z)$.

\begin{lemma}\label{ch}
Let $0\leq j\leq d$. Then
\[
H^{2j,j}(\cX,\Z)=
\left\{
  \begin{array}{llll}
    \Z\cdot 1, & \hbox{if $j=0$;} \\
    (\Z/p\Z)\widetilde Q_i(\delta), & \hbox{if $j=b-p^i+1$ and $1\leq i\leq n-1$;} \\
    K_1^s(F)\mu, & \hbox{if $j=b+1$;} \\
    0, & \hbox{otherwise.} \\
 \end{array}
\right.
\]
\end{lemma}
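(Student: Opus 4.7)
The plan is to mimic the argument of Lemma \ref{chowplus}, but with the weight constraint $w(\alpha) = 2j - i = 0$ in place of $w(\alpha) = -1$. The case $j = 0$ must be handled separately, since the classification from Theorem 1.15 of \cite{MR2645334} only parametrizes classes with $i > j$; here one directly has $H^{0,0}(\cX, \Z) = \Z \cdot 1$, generated by the identity, because the norm variety underlying $\cX$ is geometrically irreducible.

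For $1 \le j \le d$, write any nonzero $\alpha \in H^{2j,j}(\cX, \Z)$ uniquely as $\alpha = x \gamma^k Q^\varepsilon(\delta)$ with $x \in K^s_m(F)$. Since $j \le d$, the constraints $k = 0$ and $\varepsilon_n = 0$ recalled just after the classification apply. Substituting $w(\alpha) = 0$ into (\ref{ququ}) yields $m = |\varepsilon| - (n - 2)$, and comparing weights gives $j = 2 + \sum_{\ell = 1}^n \varepsilon_\ell p^\ell$. The combined conditions $m \ge 0$ and $\varepsilon_n = 0$ force $|\varepsilon| \in \{n - 2, n - 1\}$. The subcase $|\varepsilon| = n - 1$ forces $\varepsilon = (1, \ldots, 1, 0)$, so $Q^\varepsilon(\delta) = \mu$ and $j = 2 + (p + \cdots + p^{n-1}) = b + 1$, placing $\alpha$ in $K^s_1(F) \cdot \mu$. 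In the subcase $|\varepsilon| = n - 2$, the two zero entries of $\varepsilon$ lie at position $n$ and at some $i \in \{1, \ldots, n-1\}$, so $Q^\varepsilon(\delta) = \widetilde Q_i(\delta)$ and $j = b + 1 - p^i$, placing $\alpha$ in $K^s_0(F) \cdot \widetilde Q_i(\delta)$.

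The main technical point is the identification $K^s_0(F) = \Z/p\Z$, needed to match the coefficient group in the second line of the lemma. For nontrivial $s$, any extension $L/F$ with $s_L = 0$ must have $p \mid [L : F]$ by restriction--corestriction (since $s$ is $p$-primary and nonzero), while norm varieties supply splitting extensions of degree exactly $p$; hence the quotient $\Z / \gcd_L [L : F]$ equals $\Z/p\Z$. If $s = 0$, then $\delta = 0$, so both $\mu$ and each $\widetilde Q_i(\delta)$ vanish while $K^s_0(F) = K^s_1(F) = 0$, collapsing the statement consistently. Uniqueness in Theorem 1.15 of \cite{MR2645334} then guarantees that each admissible $(k, \varepsilon)$ contributes exactly a copy of $K^s_m(F)$, yielding the claimed decomposition.
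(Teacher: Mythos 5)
Your proposal is correct and follows essentially the same route as the paper: invoke the Merkurjev--Suslin classification of $H^{i,j}(\cX,\Z)$, impose $w(\alpha)=0$ in the weight formula (\ref{ququ}) together with $k=0$, $\varepsilon_n=0$ for $j\leq d$, and deduce $|\varepsilon|\in\{n-2,n-1\}$, yielding the $\widetilde Q_i(\delta)$ and $\mu$ cases respectively. The extra details you supply (the explicit formula for $j$ in terms of $\varepsilon$ and the identification $K_0^s(F)=\Z/p\Z$ via restriction--corestriction plus a degree-$p$ splitting field) are correct and merely make explicit what the paper leaves implicit.
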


\begin{proof}
We may assume that $j>0$.
Let $\alpha=x Q^\varepsilon(\delta)\in H^{2j,j}(\cX,\Z)$, where $x\in K_m^s(F)$.
Recall that $|\varepsilon|\leq n-1$. By (\ref{ququ}),
\[
0=w(\alpha)=m-|\varepsilon|+(n-2)
\]
and therefore,
\[
n-1\geq |\varepsilon| =m+(n-2).
\]
It follows that $m\leq 1$. If $m=0$, we have $|\varepsilon|=n-2$, hence $Q^\varepsilon=\widetilde Q_i$
for $i=1,2,\dots, n-1$ and $\alpha\in (\Z/p\Z)\widetilde Q_i(\delta)$.

If $m=1$, then $|\varepsilon|=n-1$ and $Q^\varepsilon=Q_1\circ Q_2\circ\dots\circ   Q_{n-1}$, hence
$\alpha\in K_1^s(F)\mu$.
\end{proof}

\begin{lemma}\label{rs}
The canonical map
\[
H^{i,j}(\cS,\Z_{(p)})\to H^{i,j}(\cR,\Z_{(p)})
\]
is an isomorphism if $i<2d$ and $j<d$.
\end{lemma}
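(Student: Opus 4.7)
The plan is to extract the isomorphism directly from the long exact motivic cohomology sequence associated to the distinguished triangle (\ref{first})
$$\cX(d)[2d]\xra{\Sym^{p-1}(x)} \cR\xra{y_{p-1}} \cS\ra \cX(d)[2d+1],$$
in which the ``canonical map'' of the statement is the pullback $y_{p-1}^{*}$. Applying the cohomological functor $H^{\bullet,\bullet}(-,\Z_{(p)})$ and using the standard identification $H^{a,b}\bigl(\cX(d)[2d],\Z_{(p)}\bigr)=H^{a-2d,\,b-d}(\cX,\Z_{(p)})$, the relevant four-term portion of the long exact sequence reads
$$H^{i-1-2d,\,j-d}(\cX,\Z_{(p)})\to H^{i,j}(\cS,\Z_{(p)})\xra{y_{p-1}^{*}} H^{i,j}(\cR,\Z_{(p)})\to H^{i-2d,\,j-d}(\cX,\Z_{(p)}).$$

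The claim then reduces to verifying that both flanking groups vanish. The hypothesis $j<d$ forces $j-d\le-1<0$, and the general vanishing $H^{*,q}(\cX,\Z_{(p)})=0$ for $q<0$ is standard: the Tate twist $\Z_{(p)}(q)$ is the zero object of $\DM(F,\Z_{(p)})$ when $q<0$, so $H^{*,q}(Y,\Z_{(p)})=0$ for every smooth $F$-scheme $Y$; for the simplicial scheme $\cX$ built from a norm variety this propagates through the standard spectral sequence with $E_{1}^{s,t}=H^{t,q}(X^{\times(s+1)},\Z_{(p)})\Rightarrow H^{s+t,q}(\cX,\Z_{(p)})$, whose $E_{1}$-page is identically zero for $q<0$.

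The auxiliary hypothesis $i<2d$ does not in fact enter the above argument, so the isomorphism holds under the single assumption $j<d$; the bound on $i$ is presumably retained to match the precise form used in the sequel. The main (and only substantive) task is the bookkeeping of the Tate twist together with confirming the negative-weight vanishing for $\cX$; I do not foresee any further obstacle.
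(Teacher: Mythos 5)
Your proof is correct and is precisely the paper's intended argument: the paper's entire proof of this lemma is the single sentence ``Use the triangle (\ref{first})'', which abbreviates exactly your long exact sequence in motivic cohomology together with the vanishing of $H^{*,q}(\mathcal{X},\mathbb{Z}_{(p)})$ in negative weights $q=j-d<0$. Your side remark is also accurate: only the hypothesis $j<d$ enters, the bound $i<2d$ being superfluous for this argument.
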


\begin{proof}
Use the triangle (\ref{first}).
\end{proof}

The Chow groups with coefficients in $\Z_{(p)}$ of a motive $\cN$ in
$\DM(F,\Z_{(p)})$ are defined as
$$
\CH^i(\cN):=H^{2i,i}(\cN,\Z_{(p)}).
$$

\begin{theorem}
\label{main}
Let $\cR$ be the Rost motive of a nontrivial $(n+1)$-symbol modulo $p$. Then
\[
\CH^j(\cR)=
\left\{
  \begin{array}{llll}
    \Z_{(p)}, & \hbox{if $j=0$;} \\
    p\Z_{(p)}, & \hbox{if $j=bk$, $1\leq k\leq p-1$;} \\
\Z/p\Z, & \hbox{if $j=bk-p^i+1$, $1\leq k\leq p-1$, $1\leq i\leq n-1$;} \\
    0, & \hbox{otherwise.} \\
 \end{array}
\right.
\]
\end{theorem}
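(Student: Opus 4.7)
The plan is to compute $\CH^j(\cR)=H^{2j,j}(\cR,\Z_{(p)})$ iteratively from the long exact sequences induced by the exact triangles (\ref{first}) and (\ref{second}), using the explicit descriptions of $H^{*,*}(\cX,\Z)$ provided by Lemmas \ref{chowplus} and \ref{ch}.

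First I would compute $\CH^*(\cM)$ from the defining triangle $\cX(b)[2b]\to\cM\to\cX\xrightarrow{\mu}\cX(b)[2b+1]$. The induced long exact sequence
\[
H^{2(j-b)-1,\,j-b}(\cX)\xrightarrow{\mu}\CH^j(\cX)\to\CH^j(\cM)\to\CH^{j-b}(\cX)\xrightarrow{\mu}H^{2j+1,\,j}(\cX)
\]
has outer arrows induced by the class $\mu\in H^{2b+1,b}(\cX,\Z)$. Lemma \ref{chowplus} says the right-hand target is zero for $0\le j\le d$ except at $j=b$, where the map $\CH^0(\cX)=\Z_{(p)}\to(\Z/p\Z)\mu$ is surjective with kernel $p\Z_{(p)}$. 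Combined with Lemma \ref{ch}, this produces the expected ``binary'' pattern: $\CH^0(\cM)=\Z_{(p)}$, $\CH^b(\cM)=p\Z_{(p)}$, $\CH^{b-p^i+1}(\cM)=\Z/p\Z$ (carried by $\widetilde Q_i(\delta)$) for $1\le i\le n-1$, and zero in all other nonnegative codimensions.

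Then I iterate through $\Sym^i(\cM)$ for $i=2,\dots,p-1$ by producing, for each $i$, a triangle
\[
\Sym^{i-1}(\cM)(b)[2b]\xrightarrow{x_i}\Sym^i(\cM)\xrightarrow{r_i}\cX\to\Sym^{i-1}(\cM)(b)[2b+1]
\]
analogous to (\ref{second}), whose existence and compatibility across $i$ are encoded by Corollary \ref{manyi} and Lemma \ref{ccom}. The long exact sequence of this triangle, together with the already computed $\CH^*(\Sym^{i-1}(\cM))$, adds one $p\Z_{(p)}$ in codimension $ib$ (carried by a shifted generator of $\CH^0(\cX)$) and one $\Z/p\Z$ in each codimension $ib-p^k+1$, $1\le k\le n-1$ (carried by $\widetilde Q_k(\delta)$), while leaving everything else unchanged. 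At the final step $i=p-1$ I would also invoke triangle (\ref{first}) together with Lemma \ref{rs} as a cross-check in the range $j\ge d$, since \eqref{first} expresses $\CH^j(\cR)$ in terms of $\CH^j(\cS)$ and $\CH^{j-d}(\cX)$.

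The main obstacle is verifying that the outer boundary maps in each long exact sequence vanish in the appropriate degree range, so that the sequences collapse into short exact sequences that split off as claimed. Concretely, I must identify each connecting map as a multiplication by $\mu$ on $H^{*,*}$, and then read off the necessary vanishings from Lemmas \ref{chowplus} and \ref{ch}, which concentrate $H^{2j+1,j}(\cX)$ and $H^{2j,j}(\cX)$ to very few degrees. The unique nonvanishing value of a boundary map (the one hitting $\mu$) is precisely what forces the top summand in each codimension $bk$ to be $p\Z_{(p)}$ rather than $\Z_{(p)}$; this is the cohomological shadow of the fact that the norm variety of a nontrivial symbol has no $F$-rational point of prime-to-$p$ degree, so that $\deg\CH_0=p\Z_{(p)}$.
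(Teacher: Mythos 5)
Your overall strategy---running long exact sequences of motivic cohomology against the known groups $H^{2j,j}(\mathcal{X})$ and $H^{2j+1,j}(\mathcal{X})$ from Lemmas \ref{ch} and \ref{chowplus}---is the right one, and your computation of $\operatorname{CH}^{*}(\mathcal{M})$ from the defining triangle is correct. But the induction you then run is over the symmetric-power index $i$, and this is where the proposal has a genuine gap: for $2\le i\le p-2$ you need
\[
\operatorname{S}^{i-1}(\mathcal{M})(b)[2b]\xrightarrow{x_i}\operatorname{S}^{i}(\mathcal{M})\xrightarrow{r_i}\mathcal{X}\to\operatorname{S}^{i-1}(\mathcal{M})(b)[2b+1]
\]
to be a distinguished triangle, and neither Corollary \ref{manyi} nor Lemma \ref{ccom} ``encodes'' that. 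Those results only assert that certain squares of morphisms commute; they do not say that $x_i$ and $r_i$ are two consecutive maps of an exact triangle. The paper records such triangles only for $i=p-1$, namely (\ref{first}) and (\ref{second}), imported from Voevodsky, so your intermediate triangles would have to be established separately. Moreover, at each level $i$ you must identify the connecting morphism $\mathcal{X}\to\operatorname{S}^{i-1}(\mathcal{M})(b)[2b+1]$ well enough to see that its effect on $\operatorname{CH}^{0}$ is a prime-to-$p$ multiple of $\mu$ (this is what forces $p\mathbb{Z}_{(p)}$ rather than $\mathbb{Z}_{(p)}$ at every level) and that its effect on $H^{1,1}$ surjects onto $K_1^s(F)\mu$ (this is what kills codimension $b+1$). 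You correctly flag this as the main obstacle, but only the case $i=p-1$ of it is supplied by Lemma \ref{ccom}; for general $i$ you would have to compose the diagrams of Corollary \ref{manyi} down to level $1$ and divide by $(i-1)!$.

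The paper's proof is organized precisely so that none of this is needed. It never computes $\operatorname{CH}^{*}(\mathcal{M})$ or $\operatorname{CH}^{*}(\operatorname{S}^{i}(\mathcal{M}))$ for intermediate $i$: it inducts on the codimension $j$ for the single object $\mathcal{R}$, using only the triangle (\ref{second}), whose long exact sequence reads $\operatorname{CH}^{j}(\mathcal{X})\to\operatorname{CH}^{j}(\mathcal{R})\to\operatorname{CH}^{j-b}(\mathcal{S})\to H^{2j+1,j}(\mathcal{X})$. The role of the other triangle (\ref{first}) is exhausted by Lemma \ref{rs}, which gives $\operatorname{CH}^{j-b}(\mathcal{S})\simeq\operatorname{CH}^{j-b}(\mathcal{R})$ in the relevant range; the sequence then becomes a recursion $\operatorname{CH}^{j}(\mathcal{R})\simeq\operatorname{CH}^{j-b}(\mathcal{R})$ away from the two exceptional degrees $j=b$ and $j=b+1$, which are handled exactly once via Lemma \ref{ccom} and the surjectivity of $K_1(F)\to K_1^s(F)$. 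If you want to keep your level-by-level induction you must import the intermediate triangles from Voevodsky's paper and redo the connecting-map identification at each level; otherwise, switch to the induction on $j$, which gets by with exactly the two triangles the paper actually has.
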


\begin{proof}
We induct on $j=0,1,\dots, d$. First suppose that $j<b$. The triangle (\ref{second}) yields an isomorphism
\[
\CH^j(\cR)\simeq \CH^j(\cX)
\]
and the statement follows from Lemma \ref{ch}.

Now consider the case $j=b$. The triangle (\ref{second}) yields an exact sequence
\[
\CH^b(\cX)\to \CH^b(\cR) \to \CH^0(\cS)\to H^{2b+1,b}(\cX,\Z_{(p)}).
\]
The first term is trivial by Lemma \ref{ch} and the last is equal to $(\Z/p\Z)\mu$ by Lemma \ref{chowplus}.
By Lemma \ref{rs}, $\CH^0(\cS)=\CH^0(\cR)=\Z_{(p)}$.
The last map is the multiplication by $(p-1)\mu=-\mu$  by Lemma \ref{ccom}, hence $\CH^b(\cR)=p\Z_{(p)}$.

Now assume that $j=b+1$. The triangle (\ref{second}) gives an exact sequence
\[
H^{1,1}(\cS,\Z_{(p)}) \to \CH^{b+1}(\cX)\to \CH^{b+1}(\cR) \to \CH^1(\cS).
\]
By Lemma \ref{rs},
the last term is trivial and the
first term is equal to
$$
H^{1,1}(\cR,\Z_{(p)})\simeq H^{1,1}(\cX,\Z_{(p)})\simeq K_1(F)\otimes\Z_{(p)}.
$$
In view of Lemma \ref{ch}, the second term is
equal to $K_1^s(F)\mu$. The first map is multiplication by $\mu$, hence is surjective.
It follows that $\CH^{b+1}(\cR)=0$.

Now suppose that $j>b+1$. The triangle (\ref{second}) gives an exact sequence
\[
\CH^j(\cX)\to \CH^j(\cR) \to \CH^{j-b}(\cS)\to H^{2j+1,j}(\cX,\Z_{(p)}).
\]
It follows from Lemma \ref{rs} that $\CH^{j-b}(\cS)\simeq\CH^{j-b}(\cR)$.  By Lemmas \ref{chowplus} and \ref{ch},
the first and the last terms are trivial, hence
\[
\CH^j(\cR)\simeq \CH^{j-b}(\cR)
\]
and the result follows by induction.
\end{proof}

}

{

\newcommand{\s}{s}
\newcommand{\cM}{\mathcal{M}}
\newcommand{\Steen}{S}
\newcommand{\ISteen}{\mathbb{S}}
\newcommand{\sq}{\operatorname{sq}}
\newcommand{\st}{\operatorname{st}}

\newcommand{\onto}{\rightarrow\!\!\rightarrow}
\newcommand{\codim}{\operatorname{codim}}
\newcommand{\rdim}{\operatorname{rdim}}

\newcommand{\colim}{\operatorname{colim}}
\newcommand{\Div}{\operatorname{div}}
\newcommand{\Td}{\operatorname{Td}}
\newcommand{\td}{\operatorname{td}}
\newcommand{\itd}{\operatorname{itd}}

\newcommand{\ch}{\mathop{\mathrm{ch}}\nolimits}
\newcommand{\fch}{\mathop{\mathfrak{ch}}\nolimits}

\newcommand{\trdeg}{\operatorname{tr.deg}}

\newcommand{\SB}{X}

\newcommand{\Br}{\mathop{\mathrm{Br}}}
\newcommand{\Gal}{\mathop{\mathrm{Gal}}}
\renewcommand{\Im}{\mathop{\mathrm{Im}}}
\newcommand{\Pic}{\mathop{\mathrm{Pic}}}
\newcommand{\ind}{\mathop{\mathrm{ind}}}
\newcommand{\coind}{\mathop{\mathrm{coind}}}

\newcommand{\rk}{\mathop{\mathrm{rk}}}
\newcommand{\CH}{\mathop{\mathrm{CH}}\nolimits}

\newcommand{\BCH}{\mathop{\overline{\mathrm{CH}}}\nolimits}

\newcommand{\SO}{\operatorname{\mathrm{SO}}}
\newcommand{\OO}{\operatorname{\mathrm{O}}}

\newcommand{\Sp}{\operatorname{\mathrm{Sp}}}

\newcommand{\PGSp}{\operatorname{\mathrm{PGSp}}}

\newcommand{\Spin}{\operatorname{\mathrm{Spin}}}

\newcommand{\PGO}{\operatorname{\mathrm{PGO}}}

\newcommand{\PGL}{\operatorname{\mathrm{PGL}}}

\newcommand{\GL}{\operatorname{\mathrm{GL}}}

\newcommand{\SL}{\operatorname{\mathrm{SL}}}

\newcommand{\Ch}{\mathop{\mathrm{Ch}}\nolimits}
\newcommand{\BCh}{\mathop{\overline{\mathrm{Ch}}}\nolimits}

\newcommand{\TCh}{\mathop{\tilde{\mathrm{Ch}}}\nolimits}

\newcommand{\IBCH}{\mathop{\bar{\mathrm{CH}}}\nolimits}
\newcommand{\BCHE}{\mathop{\mathrm{Che}}\nolimits}

\newcommand{\Stab}{\mathop{\mathrm{Stab}}\nolimits}

\newcommand{\res}{\mathop{\mathrm{res}}\nolimits}
\newcommand{\cores}{\mathop{\mathrm{cor}}\nolimits}

\newcommand{\cd}{\mathop{\mathrm{cd}}\nolimits}
\newcommand{\Gcd}{\mathop{\mathfrak{cd}}\nolimits}

\newcommand{\pr}{\operatorname{\mathit{pr}}}

\newcommand{\inc}{\operatorname{\mathit{in}}}

\newcommand{\mult}{\operatorname{mult}}

\newcommand{\Char}{\mathop{\mathrm{char}}\nolimits}
\newcommand{\Dim}{\mathop{\mathrm{Dim}}\nolimits}
\newcommand{\id}{\mathrm{id}}
\newcommand{\coker}{\mathrm{coker}}
\newcommand{\Z}{\mathbb{Z}}
\newcommand{\F}{\mathbb{F}}
\newcommand{\A}{\mathbb{A}}
\newcommand{\PP}{\mathbb{P}}
\newcommand{\Q}{\mathbb{Q}}
\newcommand{\BBF}{\mathbb{F}}
\newcommand{\HH}{\mathbb{H}}
\newcommand{\C}{\mathbb{C}}
\newcommand{\R}{\mathbb{R}}
\newcommand{\Gm}{\mathbb{G}_{\mathrm{m}}}
\newcommand{\Ga}{\mathbb{G}_{\mathrm{a}}}
\newcommand{\cF}{\mathcal F}
\newcommand{\cO}{\mathcal O}
\newcommand{\cE}{\mathcal E}
\newcommand{\cT}{\mathcal T}
\newcommand{\cA}{\mathcal A}
\newcommand{\cB}{\mathcal B}

\newcommand{\Mor}{\operatorname{Mor}}
\newcommand{\Spec}{\operatorname{Spec}}
\newcommand{\End}{\operatorname{End}}
\newcommand{\Hom}{\operatorname{Hom}}
\newcommand{\Aut}{\operatorname{Aut}}

\newcommand{\Tors}{\operatorname{Tors}}

\newcommand{\an}{0}
\newcommand{\op}{\mathrm{op}}

\newcommand{\X}{\mathfrak{X}}
\newcommand{\Y}{\mathcal{Y}}

\newcommand{\pt}{\mathbf{pt}}
\newcommand{\PS}{\mathbb{P}}

\newcommand{\type}{\mathop{\mathrm{type}}}

\newcommand{\Coprod}{\operatornamewithlimits{\textstyle\coprod}}
\newcommand{\Prod}{\operatornamewithlimits{\textstyle\prod}}
\newcommand{\Sum}{\operatornamewithlimits{\textstyle\sum}}
\newcommand{\Oplus}{\operatornamewithlimits{\textstyle\bigoplus}}

\newcommand{\disc}{\operatorname{disc}}

\newcommand{\<}{\left<}
\renewcommand{\>}{\right>}
\renewcommand{\ll}{\<\!\<}
\newcommand{\rr}{\>\!\>}

\newcommand{\stiso}{\overset{st}\sim}
\newcommand{\miso}{\overset{m}\sim}

\newcommand{\Dfn}{\stackrel{\mathrm{def}}{=}}

\marginparwidth 2.5cm
\newcommand{\Label}{\label}

\newcommand{\compose}{\circ}
\newcommand{\ur}{\mathrm{ur}}
\newcommand{\Ker}{\operatorname{Ker}}
\newcommand{\TCH}{\Tors\CH}
\newcommand{\CM}{\operatorname{CM}}
\newcommand{\BCM}{\operatorname{\overline{CM}}}

\newcommand{\Ann}{\operatorname{Ann}}

\newcommand{\corr}{\rightsquigarrow}

\newcommand{\IW}{\mathfrak{i}_0}
\newcommand{\iw}{\mathfrak{i}}
\newcommand{\jw}{\mathfrak{j}}

\newcommand{\Hight}{\mathfrak{h}}
\newcommand{\Height}{\mathfrak{h}}
\newcommand{\Dh}{\mathfrak{d}}

\newcommand{\IS}{i_S}

\newcommand{\Sym}{\operatorname{Sym}}
\newcommand{\D}{D}

\newcommand{\BC}{\ast}
\newcommand{\NBC}{\compose}
\newcommand{\ABC}{\star}
\newcommand{\NC}{\compose}
\newcommand{\AC}{\bullet}

\newcommand{\Fields}{\mathbf{Fields}}
\newcommand{\fgFields}{\mathbf{fgFields}}
\newcommand{\Bin}{\mathbf{2^0}}
\newcommand{\Sets}{\mathbf{pSets}}

\newcommand{\eps}{\varepsilon}

\renewcommand{\phi}{\varphi}

\newcommand{\RatM}{\dashrightarrow}
\newcommand{\Place}{\dashrightarrow}

\newcommand{\WR}{\mathcal{R}}

\newcommand{\mf}{\mathfrak}

\appendix
\addtocounter{section}{18}
\renewcommand{\thesection}{SC}

\section
{Special correspondences}
\label{Special correspondences}

In this Appendix, $\CH$ is the Chow group with integer coefficients and
$\Ch$ is the Chow group with coefficients in $\F_p$.
The base field $F$ is of arbitrary characteristic $\ne p$.

Let $X$ be a smooth complete geometrically irreducible variety of
dimension $d:=p^n-1$ for some $n\geq1$.
A {\em special correspondence} $\sigma$ on $X$ is an
{\em anti-symmetric} ($\sigma^t=-\sigma$)
element of $\CH^b(X\times
X)$, where $b=(p^n-1)/(p-1)$,
such that for the image $H\in\CH^b(X_{F(X)})$ of $\sigma$ under the pull-back
along the morphism $X_{F(X)}\to X\times X$, induced by the generic point of the first
factor, one has:
\begin{enumerate}
\item
$\sigma_{F(X)}=1\times H-H\times 1$ and
\item
the degree of the $0$-cycle class $H^{p-1}$ is not divisible by $p$.
\end{enumerate}
(The original definition of a special correspondence given in \cite{markus}
is more restrictive, but we only need the above properties.)

As shown in \cite{markus}, any standard norm variety possesses a special
correspondence, and this explains our interest to varieties possessing
a special correspondence.

We are going to use the Steenrod operations on $\Ch$, \cite{MR1953530} or
\cite{Boisvert} (or \cite{MR2710781}).
For any $i\in\Z$, we write $\Steen^i$ for the cohomological Steenrod
operation which increases the codimension by $i$.
The way of indexing differs from that of \cite{MR1953530}.
In our indexing we have $\Steen^i=0$ if $i$ is not divisible by $p-1$.
Note that for existence of the Steenrod operations,
we do not need to assume quasi-projectivity of varieties,
\cite[\S10]{MR1953530}.

\subsection
{Rationality of Steenrod operations}
\label{Rationality of Steenrod operations}

Here is the main result of this subsection which we prove using the modification
due to R. Fino \cite{Fino}
of the original technique due to A. Vishik \cite{MR2305424}.
It extends (a weakened version of) Theorem \ref{mainZp}:
for $s=0$ the result below is very close to Theorem \ref{mainZp} (for $\Lambda=\F_p$)
weakened by the presence of an exponent $p$ element in the statement as well as by the requirement that
$Y$ is smooth (which we need for the Steenrod operations to be defined).
Note that unlike Theorem \ref{mainZp}, the proof of the result below does not
rely on Appendix \ref{Chow groups of Rost motives}.

We recall that two smooth complete irreducible varieties are {\em
equivalent} if there exist multiplicity $1$ correspondences (with
$\Lambda=\F_p$) between them in both directions.

\begin{thm}
\label{mainSI}
Let $X$ be an $A$-trivial (for $\Lambda=\F_p$) $F$-variety equivalent to an $A$-trivial
$F$-variety of dimension $p^n-1$ possessing a special correspondence.
Then for any smooth
irreducible
$F$-variety $Y$, any $m,s\in\Z$ with
$s>(m-b)(p-1)$, and any $y\in\Ch^m(Y_{F(X)})$, the element $\Steen^s(y)\in\Ch^{m+s}(Y_{F(X)})$
is {\em rational} (i.e., comes from $F$) up to
the class modulo $p$ of an exponent $p$ element of $\CH^{m+s}(Y_{F(X)})$.
\end{thm}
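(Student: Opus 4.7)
The plan is to adapt the Vishik-Fino rationality technique, using the Steenrod calculus on the product $X\times Y$ together with the special correspondence.

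First I would reduce to the case where $X$ itself has dimension $p^n-1$ and carries a special correspondence $\sigma\in\CH^b(X\times X)$. The equivalence $X\sim X'$ provides multiplicity-$1$ correspondences in both directions; combined with the $A$-triviality of both varieties (compare Lemma \ref{surj} and Corollary \ref{l}) these give compatible comparisons between $\Ch^*(Y_{F(X)})$ and $\Ch^*(Y_{F(X')})$ at the level of rational classes. Since $\Steen^s$ commutes with the flat pullbacks involved, the reduction is compatible with the formation of $\Steen^s(y)$.

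Given $y\in\Ch^m(Y_{F(X)})$, I would lift it to an $F$-rational class $\alpha\in\Ch^m(X\times Y)$ via the surjection coming from the localization sequence at the generic point of $X$. Applying $\Steen^s$ produces an $F$-rational class $\Steen^s(\alpha)\in\Ch^{m+s}(X\times Y)$ whose restriction to the generic fiber is $\Steen^s(y)$.

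The core of the argument is to convert this into rationality on $Y$ itself. Using $\sigma$, I would produce an $F$-rational lift $\Sigma\in\CH^d(X\times X)$ (with $d=b(p-1)=\dim X$) of a symmetric combination involving $H^{p-1}\in\Ch^d(X_{F(X)})=\Ch_0(X_{F(X)})$, and form a rational cycle on $X\times X\times Y$ built from $\Sigma$ and the pullback of $\alpha$ to the second $X$-factor. Applying $\Steen^s$ and expanding via the Cartan formula, Steenrod operations of positive degree annihilate $H^{p-1}$ for purely dimensional reasons (it already sits in top codimension on $X_{F(X)}$), so the expansion collapses — after pullback to $X_{F(X)}\times Y$ — to the single diagonal term $H^{p-1}\cdot\Steen^s(y)$ plus correction pieces. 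Pushing forward along the first $X$-projection yields an identity expressing $\deg(H^{p-1})\cdot\Steen^s(y)$ as a rational class plus corrections arising from the non-rational difference between $\Sigma$ and a chosen lift of $H^{p-1}$. The bound $s>(m-b)(p-1)$ forces those corrections to come from integral cycles on $X\times Y$ of non-maximal support on the $X$-side, which contribute only exponent-$p$ elements of $\CH^{m+s}(Y_{F(X)})$ via the localization sequence. Invertibility of $\deg(H^{p-1})$ modulo $p$ (which holds by the definition of a special correspondence) then yields the claim.

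The main obstacle will be the last step: executing the Cartan-formula expansion cleanly, handling the anti-symmetry $\sigma^t=-\sigma$ (which introduces cancellations among cross-terms involving intermediate powers $H^k\times H^{p-1-k}$), and tracing integral lifts through the localization sequence to ensure the correction term genuinely has exponent $p$ rather than representing a more general non-rational obstruction. Everything else is a formal consequence of $A$-triviality, the definition of special correspondence, and the Steenrod calculus recalled in \cite{MR1953530}.
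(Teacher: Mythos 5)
Your outline correctly identifies the ambient framework (reduce to a variety of dimension $p^n-1$ carrying a special correspondence, lift $y$ to $\Ch^m(X\times Y)$, compose with a power of $\sigma$, exploit $\deg(H^{p-1})\not\equiv 0 \bmod p$), and the reduction step via equivalence and $A$-triviality is essentially the one in the paper. But the core step has a genuine gap: your identity ``pushing forward along the first $X$-projection yields an identity expressing $\deg(H^{p-1})\cdot S^s(y)$ as a rational class plus corrections'' presupposes that the Steenrod operation commutes with the proper push-forward along a projection with fiber $X$. It does not. The entire content of the Vishik--Fino--Rost technique is the control of this failure via the Riemann--Roch (Wu) formula
$$
S^{d+s}\bigl(\mathit{pr}_{2*}(x)\bigr)=\mathit{pr}_{2*}\sum_{0\le i\le d+s} b_i(-T_X)\cdot S^{d+s-i}(x),
$$
and it is here -- not in a localization sequence -- that both the hypothesis and the error term of the theorem actually originate. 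The bound $s>(m-b)(p-1)$ is used to kill the two ``extreme'' terms $S^{d+s}(\mathit{pr}_{2*}x)$ and $S^{d+s}(x)$ (and later the contributions of the summands $H^r\times x_r$, $r\ge 1$, of $x_{F(X)}$); and the exponent-$p$ ambiguity arises because the surviving main term is $\deg\bigl(b_d(-T_X)\bigr)\cdot S^s_{x_0}$ with $\deg b_d(-T_X)$ divisible by $p$ but not by $p^2$ (Rost's degree formula input), so that one must divide a congruence modulo $p^2$ by $p$. Your account of the error term as coming from ``cycles of non-maximal support on the $X$-side via the localization sequence'' is not a mechanism that produces exponent-$p$ elements, and without the $b_i(-T_X)$ terms there is no identity to push forward in the first place.

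Two further points would also need repair. First, the claimed ``collapse'' of the Cartan expansion is too optimistic: only the factor $H^{p-1}$ (a $0$-cycle class) is annihilated by positive Steenrod operations for dimension reasons; the cross terms $H^k\times H^{p-1-k}$ with $0<k<p-1$ are not, and handling them requires knowing that $S^l(H)$ is rational for $l>0$ (Lemma \ref{Hrat} of the paper, itself a separate argument with $\sigma$), together with the divisibility statements of Corollary \ref{ratF(X)rat}, which in turn rest on the existence of a Rost projector on $X$ (Lemma \ref{Rost and special}) and on the rationality of $pH$. Second, the bookkeeping must be done with integral representatives modulo the ideal $p^2\CH(Y_{F(X)})+p\,\mathrm{Im}\bigl(\CH(Y)\to\CH(Y_{F(X)})\bigr)$, since the conclusion concerns an integral exponent-$p$ class; working purely modulo $p$ as your sketch suggests cannot detect it. In short, the skeleton is right but the load-bearing computation (Proposition \ref{mainSIprop} in the paper) is absent, and the substitute you propose for it would fail at the push-forward step.
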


\begin{example}
Let $X$ be the Severi-Brauer variety of a degree $p$ central simple
$F$-algebra.
The variety $X$ has dimension $p-1$, is $A$-trivial (see Example \ref{PHV}), and possesses a
special correspondence (see \cite[Remark 7.17]{MR1776119}).
It follows that for any smooth $F$-variety $Y$ and any element
$y\in\Ch(Y_{F(X)})$, its $p$th power $y^p$ is rational up to the class modulo $p$ of an exponent
$p$ element.
Indeed, one may assume that $Y$ is irreducible and $y$ is homogeneous of
some codimension $m\geq 0$.
Then $y^p=\Steen^s(y)$ with $s=m(p-1)>(m-b)(p-1)$ (note that $n=1$ and $b=(p^n-1)/(p-1)=1$ here
so that Theorems \ref{mainZ} and \ref{mainZp}, if applicable at all, are vacuous in this situation).
\end{example}

\begin{proof}[Proof of Theorem \ref{mainSI}]
If $\deg\CH_0(X)\not\subset p\Z$, then $1\in\deg\Ch_0(X)$ and we are done
by Lemma \ref{surj}.
Below we are assuming that $\deg\CH_0(X)\subset p\Z$.

If the conclusion of Theorem \ref{mainSI} holds for an $A$-trivial variety $X$, then it also holds for
any $A$-trivial variety $X'$ equivalent to $X$.
Indeed, by Lemma \ref{surj}, the right and the bottom maps of the
commutative square
$$
\begin{CD}
\Ch(Y) @>>> \Ch(Y_{F(X)})\\
@VVV   @VVV\\
\Ch(Y_{F(X')}) @>>> \Ch(Y_{F(X\times X')})
\end{CD}
$$
are isomorphisms.
Therefore we may assume that the variety $X$ itself has dimension $d:=p^n-1$ and possesses a special
correspondence $\sigma\in\CH^b(X\times X)$.
As in our definition of special correspondence, let $H\in\CH^b(X_{F(X)})$ be the image of $\sigma$.

\begin{lemma}
\label{pH}
The element $pH\in\CH^b(X_{F(X)})\otimes\Z_{(p)}$ is rational.
\end{lemma}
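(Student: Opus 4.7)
The plan is to apply Corollary \ref{l} once the existence of an $F$-rational zero-cycle on $X$ of $\Z_{(p)}$-degree $p$ is established. Combined with the standing hypothesis $\deg\CH_{0}(X) \subset p\Z$, such a cycle amounts to the equality $\deg\CH_{0}(X) \otimes \Z_{(p)} = p\Z_{(p)}$. Given such $z \in \CH_{0}(X) \otimes \Z_{(p)}$ with $\deg(z) = p$, Corollary \ref{l} applied with $Y = X$ and $l = p$ says that the image of $\CH(X) \to \CH(X_{F(X)})$ contains $p\CH(X_{F(X)})$, so in particular $pH$ is rational.

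The same conclusion can be obtained directly from the correspondence action of $\sigma$, which is perhaps more instructive. For any $z \in \CH_{0}(X) \otimes \Z_{(p)}$ the $F$-rational class $\sigma_{*}(z) := \pi_{2*}(\sigma \cdot \pi_{1}^{*}z) \in \CH^{b}(X) \otimes \Z_{(p)}$ has image $\deg(z) \cdot H$ under the change of field map $\CH^{b}(X) \otimes \Z_{(p)} \to \CH^{b}(X_{F(X)}) \otimes \Z_{(p)}$. Indeed, using the defining formula $\sigma_{F(X)} = 1 \times H - H \times 1$, the intersection $\sigma \cdot \pi_{1}^{*}z$ decomposes over $F(X)$ as $z \times H - (H \cdot z) \times 1$; the second summand vanishes because $H \cdot z \in \CH^{b+d}(X_{F(X)}) = 0$, and $\pi_{2*}$ of the first summand equals $\deg(z) \cdot H$. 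Taking $z$ of $\Z_{(p)}$-degree $p$ gives $pH$ as the image.

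The hard part will be producing the degree-$p$ zero-cycle. The diagonal-pullback trick that works for abstract Rost projectors (producing a degree-$p$ class as $\Delta^{*}$ of the projector, as after the definition of abstract Rost motive in the main text) fails here: antisymmetry forces $2\Delta^{*}\sigma = 0$, so $\Delta^{*}\sigma$ vanishes in $\Z_{(p)}$ for odd $p$; and the binomial identity $\sum_{k=0}^{p-1} (-1)^{k} \binom{p-1}{k} = 0$ combined with $H^{k} \cdot H^{p-1-k} = H^{p-1}$ gives $\Delta^{*}\sigma^{p-1} = 0$ over $F(X)$ as well. The strategy is instead to exploit the nontriviality $\deg H^{p-1} \not\equiv 0 \pmod{p}$ together with the $A$-triviality of $X$, in the spirit of Rost's construction in \cite{markus}, via a careful intersection-theoretic argument on $X \times X$ (or equivalently a Chern-number computation for the tangent bundle of $X$), to descend the generator of $\CH_{0}(X_{F(X)}) \otimes \Z_{(p)}$ to an $F$-rational zero-cycle on $X$ of $\Z_{(p)}$-degree $p$.
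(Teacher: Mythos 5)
Your proposal does not prove the lemma: everything is reduced to the existence of a zero-cycle on $X$ over $F$ whose degree is divisible by $p$ exactly once, and that existence is precisely the step you leave open ("the hard part"), deferred to an unspecified "careful intersection-theoretic argument". Worse, in the logical order of this appendix such a zero-cycle is a \emph{consequence} of Lemma \ref{pH}, not an available input: the natural source of a degree-$p$ zero-cycle is the diagonal pull-back of the Rost projector of Lemma \ref{Rost and special}, whose construction uses Lemma \ref{pH}, so completing your argument that way would be circular. The only non-circular way I see to fill the gap is Rost's theorem that $\deg b_{d}(-T_X)$ is not divisible by $p^2$ (\cite[Theorem 9.9]{markus}, quoted later in the proof of Proposition \ref{mainSIprop}), which together with the standing hypothesis $\deg\CH_0(X)\subset p\Z$ does produce such a cycle --- but that is a substantial external input, not a routine computation, and you do not actually invoke it. (A smaller issue: the route through Corollary \ref{l} also needs $A$-triviality with $\Z_{(p)}$-coefficients, while Theorem \ref{mainSI} only assumes it for $\F_p$; your second, direct mechanism avoids this.)

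The lemma in fact has a two-line direct proof that needs no zero-cycle at all: put $\rho:=\sigma^{p-1}$ (product in the Chow ring) and compute the composition of correspondences $\sigma\circ\rho$ over $F(X)$. Expanding $(1\times H-H\times1)^{p-1}$ and pushing forward, only the terms with $k=0$ and $k=1$ survive, giving $(\sigma\circ\rho)_{F(X)}=\deg(H^{p-1})\bigl(1\times H+(p-1)H\times1\bigr)$; the diagonal pull-back of this $F$-rational class is $\deg(H^{p-1})\cdot pH$, and $\deg(H^{p-1})$ is invertible in $\Z_{(p)}$. Your second paragraph (the action of $\sigma$ on a degree-$p$ zero-cycle) is correct as far as it goes and is in the same spirit, but applying $\sigma\circ\rho$ to the diagonal instead of applying $\sigma$ to a hypothetical zero-cycle is exactly what makes the hypothesis you cannot verify unnecessary.
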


\begin{proof}
We set $\rho:=\sigma^{p-1}$ (the power is taken using multiplication in the Chow group,
not composition of correspondences).
Then
\begin{equation}
\label{rhosigma}
(\sigma\compose\rho)_{F(X)}/\deg(H^{p-1})=1\times H+(p-1)H\times1,
\end{equation}
and the
pull-back of the rational element (\ref{rhosigma}) with respect to the diagonal of $X$ produces $pH$.
\end{proof}

The following lemma holds with coefficients in $\Z_{(p)}$ (cf.
\cite[Proposition 5.9]{markus}), although we need it now only for coefficients in $\F_p$:

\begin{lemma}
\label{Rost and special}
An abstract Rost motive (with coefficients in $\Z_{(p)}$ as well as with coefficients in $\F_p$) lives on $X$.
More precisely, there exists a symmetric (Rost) projector in $\CH^d(X\times
X)\otimes\Z_{(p)}$ such that over $F(X)$ it is equal to
$$
(1\times H^{p-1}+H\times H^{p-2}+\dots+H^{p-1}\times1)/\deg(H^{p-1}).
$$
\end{lemma}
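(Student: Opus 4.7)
My plan is to exhibit $\rho$ in three stages: construct its target value over $F(X)$, produce a rational lift modulo $p$ from the special correspondence, then upgrade to a $\Z_{(p)}$-valued symmetric projector via idempotent-lifting combined with the Rost nilpotence principle.

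First I would check that, with $d_0 := \deg(H^{p-1}) \in \Z_{(p)}^\times$, the candidate $\pi := d_0^{-1}\sum_{i=0}^{p-1} H^i\times H^{p-1-i}$ is a symmetric projector in $\CH^d(X_{F(X)}\times X_{F(X)})\otimes\Z_{(p)}$: symmetry is transparent, and the composition rule $(H^i\times H^{p-1-i})\compose(H^j\times H^{p-1-j}) = \deg(H^{p-1+i-j})\cdot(H^j\times H^{p-1-i})$ vanishes on dimension grounds unless $i=j$ (since $H^a$ lives in codimension $ab$ while $\dim X = (p-1)b$) and contributes exactly $d_0$ when $i=j$, yielding $\pi\compose\pi=\pi$. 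To lift $\pi$ mod $p$ to $F$, I would use $\tau := \sigma^{p-1}/d_0 \in \CH^d(X\times X)\otimes\Z_{(p)}$, the $(p-1)$-fold intersection power of $\sigma$; the expansion $\sigma_{F(X)}^{p-1}=(1\times H - H\times 1)^{p-1}$ combined with $\binom{p-1}{k}\equiv(-1)^k\pmod p$ gives $\tau_{F(X)}\equiv\pi\pmod p$.

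Next I would promote $\tau$ to a genuine $\Z_{(p)}$-valued symmetric projector. Modulo $p$ over $F(X)$ one has $\tau^2-\tau\equiv\pi^2-\pi=0$, so by the nilpotence principle \cite[Proposition 3.1]{MR2393083} applied to $\End M(X)\otimes\F_p\to\End M(X_{F(X)})\otimes\F_p$, whose kernel is a nil ideal, $\tau^2-\tau$ is nilpotent in $\End M(X)\otimes\F_p$. Standard idempotent-lifting along this nil ideal produces an $\F_p$-projector congruent to $\tau$, and this lifts further to $\Z_{(p)}$-coefficients via the Corollary after Proposition \ref{R and R} (equivalently, invoking the $\Z_{(p)}$-Rost motive existence on standard norm varieties from \cite{MR2811603}, \cite{MR2220090}, together with the equivalence between $X$ and the corresponding standard norm variety, since $X$ is automatically a $p$-generic splitting variety for the underlying symbol). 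For symmetry, when $p$ is odd one has $(\sigma^{p-1})^t = (-\sigma)^{p-1} = \sigma^{p-1}$, so the whole argument stays inside the subring of symmetric correspondences; for $p=2$ the antisymmetric defect is corrected using the rational class $pH$ supplied by Lemma \ref{pH}.

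Finally, to verify $(X,\rho)$ is an abstract Rost motive, I would show that for any $L/F$ with $1\in\deg\CH_0(X_L)$, the motive $(X_L,\rho_L)$ splits as $\Lambda\oplus\Lambda(b)\oplus\cdots\oplus\Lambda((p-1)b)$: a prime-to-$p$ transfer reduces to the case $X(L)\neq\emptyset$, in which $H_L$ is realized by an $L$-rational class $h\in\CH^b(X_L)$ (specializing $H$ at an $L$-point), and $\rho_L=d_0^{-1}\sum_i h^i\times h^{p-1-i}$ visibly decomposes as a sum of $p$ pairwise orthogonal idempotents realizing the Tate shifts $\Lambda(ib)$. The main obstacle is the lifting step of the middle paragraph: since $\End M(X)\otimes\Z_{(p)}$ need not be $p$-adically complete, a naive $p$-adic idempotent iteration through $p$ fails, so one must thread the Rost nilpotence principle together with a Rost-motive existence result; the symmetrization in characteristic $2$ via Lemma \ref{pH} is a secondary but genuine technical subtlety.
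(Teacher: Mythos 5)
Your opening computation (the projector identity for $\pi$ over $F(X)$ and the congruence $\sigma^{p-1}_{F(X)}\equiv \deg(H^{p-1})\,\pi \bmod p$) matches the paper's starting point, but the middle step --- upgrading an $\mathbb{F}_p$-projector to a symmetric $\mathbb{Z}_{(p)}$-projector --- has a genuine gap, and it is precisely the step on which the paper's proof spends its effort. Neither of your two proposed lifting devices works. The corollary following Proposition \ref{R and R} compares two \emph{already existing} abstract Rost motives with $\mathbb{Z}_{(p)}$-coefficients that become isomorphic mod $p$; it cannot manufacture a $\mathbb{Z}_{(p)}$-projector from an $\mathbb{F}_p$-one. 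And invoking the existence of Rost motives on standard norm varieties is out of scope for Lemma \ref{Rost and special}: it sits in Appendix \ref{Special correspondences}, where the base field has arbitrary characteristic $\ne p$ and $X$ is merely a smooth complete $A$-trivial variety carrying a special correspondence --- no symbol and no norm-variety structure are given, and the appendix is explicitly designed not to rely on the characteristic-$0$ machinery. Even where that machinery applies, it would produce \emph{some} Rost projector, not one that is symmetric and restricts over $F(X)$ to the prescribed class, which is the ``more precisely'' clause that the rest of the appendix actually uses (e.g.\ $\rho_*=\rho^*$ in Proposition \ref{prop s.16}).

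The paper's way around the obstacle you yourself identify (non-completeness of $\operatorname{End}M(X)\otimes\mathbb{Z}_{(p)}$) is to stay integral throughout: it forms the symmetric correspondence $\rho'=(c\sigma^{p-1})\circ(c\sigma^{p-1})$ with $c=\deg(H^{p-1})^{-1}$, notes that $\rho'_{F(X)}-\pi$ is a combination of the classes $H^i\times H^{p-1-i}$ with coefficients divisible by $p$, and replaces $\rho'$ by $(\rho')^{\circ p^r}$ to force divisibility by $p^{p-1}$. Lemma \ref{pH} (rationality of $pH$) then makes each discrepancy term $(pH)^i\times(pH)^{p-1-i}=p^{p-1}H^i\times H^{p-1-i}$ rational, yielding a symmetric $\rho''\in\operatorname{CH}^d(X\times X)\otimes\mathbb{Z}_{(p)}$ with $\rho''_{F(X)}=\pi$ \emph{exactly}; the idempotent is then lifted along the nil kernel of the map between the commutative subrings generated by $\rho''$ and by $\pi$, nilpotence coming from specialization to all points of $X$ together with \cite[Theorem 67.1]{EKM}. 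In your write-up Lemma \ref{pH} appears only as a patch for the $p=2$ symmetry issue; in the actual proof it is the engine of the integral lift, and without it (or an equivalent device) your argument terminates at the $\mathbb{F}_p$-level.
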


\begin{proof}
It suffices to prove the statement for coefficients in $\Z_{(p)}$.

The (symmetric for $p\ne2$ and anti-symmetric for $p=2$) correspondence
$$
\rho:=\sigma^{p-1}\in\CH^d(X\times X)
$$
considered over $F(X)$ is congruent modulo $p$ to
the sum
\begin{equation}
\label{alt_sum}
1\times H^{p-1}+H\times H^{p-2}+\dots+H^{p-2}\times H+H^{p-1}\times 1.
\end{equation}
Let $c\in\Z_{(p)}$ be the inverse of the integer $\deg(H^{p-1})$.
The difference of the symmetric correspondence
$$
\rho':=(c\rho)\compose(c\rho)\in\CH^d(X\times X)\otimes\Z_{(p)},
$$
considered over $F(X)$, and the projector
$$
\pi:=c(1\times H^{p-1}+H\times H^{p-2}+\dots+ H^{p-1}\times1)\in\CH^d(X\times X)_{F(X)}\otimes\Z_{(p)}
$$
is a linear combination of $H^i\times H^{p-i-1}$, $i=0,1,\dots,p-1$
with divisible by $p$ coefficients.
The motive defined by $\pi$ is
$$
(X_{F(X)},\pi)\simeq\Z_{(p)}\oplus\Z_{(p)}(b)\oplus\dots\oplus\Z_{(p)}(d).
$$
Replacing $\rho'$ by $(\rho')^{\compose p^r}$ with sufficiently big $r$,
we keep the symmetry of $\rho'$ and get that the difference of $\rho'$ and
$\pi$ is
a linear combination of $H^i\times H^{p-i-1}$
with coefficients divisible by $p^{p-1}$.
It follows by Lemma \ref{pH} that there exists a symmetric correspondence
$\rho''\in\CH^d(X\times X)\otimes\Z_{(p)}$ with $\rho''_{F(X)}=\pi$.

Let $A$ (respectively, $B$) be the (commutative) subring of
the ring $\End M(X)$ (respectively, $\End M(X_{F(X)})$)
generated by $\rho''$ (respectively, $\pi$).
The kernel of the ring epimorphism $A\onto B$ consists of nilpotent
elements.
Indeed, any element of the kernel vanishes over $F(X)$ and, by
specialization, over the residue field of any point of $X$.
Therefore it is nilpotent by \cite[Theorem 67.1]{EKM}.
It follows by an argument like in \cite[Corollary 92.5]{EKM}
that there exists a projector in $A\subset\CH^d(X\times X)\otimes\Z_{(p)}$ whose image in $B$ is
$\pi$.
This projector is symmetric because $A$ consists of symmetric elements
only.
The motive given by this projector is an abstract Rost motive.
\end{proof}

Recall that $\deg\CH_0(X)\subset p\Z$ is assumed.

\begin{cor}
\label{ratF(X)rat}
For any $i>0$ and
any $\alpha\in\CH^i(X),\beta\in\CH_i(X_{F(X)})$, the degree of the $0$-cycle class
$\alpha_{F(X)}\cdot\beta$ is divisible by $p$.
\end{cor}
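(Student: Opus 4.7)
The plan is to derive this as an immediate consequence of Corollary \ref{cor3.11}, using Lemma \ref{Rost and special} as the bridge. First, Lemma \ref{Rost and special} (with coefficients in $\Z_{(p)}$, which specialize to $\F_p$) establishes that an abstract Rost motive lives on $X$. Since we are operating under the standing assumption $\deg\CH_0(X)\subset p\Z$, we have $1\not\in\deg\CH_0(X)$, so this abstract Rost motive is indecomposable by Lemma \ref{R indec}.

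Given that $\dim X = p^n - 1$, Corollary \ref{aRinc} then yields that $X$ is $p$-incompressible. At this point all the hypotheses of Corollary \ref{cor3.11} (equivalently, the combination of $p$-incompressibility plus Lemma \ref{ura}) are in place, and the conclusion is exactly the desired divisibility statement: for any $i > 0$ and any $\alpha \in \CH^i(X)$, $\beta \in \CH_i(X_{F(X)})$, the degree of $\alpha_{F(X)} \cdot \beta \in \CH_0(X_{F(X)})$ is divisible by $p$.

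There is no real obstacle here; the work was already done in building up the abstract Rost motive framework in Section \ref{Abstract Rost motives} and in the verification (in Lemma \ref{Rost and special}) that $X$ carries such a motive. The only point to be careful about is to note that the abstract Rost motive exists with $\Z_{(p)}$-coefficients, which immediately implies existence with $\F_p$-coefficients, so the hypothesis of Corollary \ref{cor3.11} applies regardless of whether one works integrally (in $\CH$) or modulo $p$ (in $\Ch$) for the divisibility assertion, which is an assertion about integers anyway.
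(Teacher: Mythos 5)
Your proposal is correct and matches the paper's proof, which is the one-line ``Combine Lemma \ref{Rost and special} with Corollary \ref{cor3.11}''; you have merely unfolded the chain (indecomposability via Lemma \ref{R indec}, $p$-incompressibility via Corollary \ref{aRinc}, then Lemma \ref{ura}) that underlies Corollary \ref{cor3.11} itself. The remark about $\Z_{(p)}$- versus $\F_p$-coefficients is also consistent with how the paper handles it.
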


\begin{proof}
Combine Lemma \ref{Rost and special} with Corollary \ref{cor3.11}.
\end{proof}

\begin{cor}
\label{preasb}
For any $i,j>0$, $k,l\geq0$, $\alpha\in\CH^i(X)$ and $\beta\in\CH^j(X)$, the degree of the element
$(\alpha\times\beta)_{F(X)}\cdot(H^k\times H^l)\in\CH^{i+j+b(k+l)}(X\times X)_{F(X)}$ is divisible by $p^2$.
\end{cor}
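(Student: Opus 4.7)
The plan is to reduce the claim directly to Corollary \ref{ratF(X)rat} via the compatibility of intersection products with external products, together with the multiplicativity of the degree map for external products of $0$-cycles. First, I observe that if $i+bk \ne d$ or $j+bl \ne d$, then the class $(\alpha\times\beta)_{F(X)}\cdot(H^k\times H^l)$ does not lie in $\CH_0\bigl((X\times X)_{F(X)}\bigr)$ (either the relevant Chow group vanishes for dimensional reasons, or the class has strictly positive dimension and its push-forward to a point is $0$), so its degree is $0$ and is vacuously divisible by $p^2$.

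Hence I may assume $i+bk=d=j+bl$, so that $H^k\in\CH_i(X_{F(X)})$ and $H^l\in\CH_j(X_{F(X)})$. Since $X\times X$ is smooth, the intersection product is compatible with external products (cf. \cite[\S56]{EKM}), giving
$$
(\alpha\times\beta)_{F(X)}\cdot(H^k\times H^l) \;=\; (\alpha_{F(X)}\cdot H^k)\times(\beta_{F(X)}\cdot H^l)
$$
in $\CH_0\bigl((X\times X)_{F(X)}\bigr)$. Multiplicativity of the degree map under external product of $0$-cycles then yields
$$
\deg\bigl((\alpha\times\beta)_{F(X)}\cdot(H^k\times H^l)\bigr) \;=\; \deg(\alpha_{F(X)}\cdot H^k)\cdot\deg(\beta_{F(X)}\cdot H^l).
$$

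To finish, I apply Corollary \ref{ratF(X)rat} twice: once to $(\alpha, H^k)$ with $i>0$, and once to $(\beta, H^l)$ with $j>0$. Each factor on the right is divisible by $p$, so the product is divisible by $p^2$, as required. There is no real obstacle here; the only thing to verify is that the strict positivity assumptions $i,j>0$ of the statement match exactly the hypothesis of Corollary \ref{ratF(X)rat}, which is plain from its formulation.
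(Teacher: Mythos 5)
Your proof is correct and takes essentially the same approach as the paper, which simply observes that the degree in question equals the product $\deg(\alpha_{F(X)}\cdot H^k)\cdot\deg(\beta_{F(X)}\cdot H^l)$ and applies Corollary \ref{ratF(X)rat} to each factor. Your additional care with the degenerate dimension cases and the external-product compatibility only makes explicit what the paper leaves implicit.
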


\begin{proof}
The degree is equal to the product of the degrees $\deg(\alpha_{F(X)}\cdot H^k)$ and
$\deg(\beta_{F(X)}\cdot H^l)$ each of which is divisible by $p$ by
Corollary \ref{ratF(X)rat}.
\end{proof}

\begin{cor}
\label{asb}
For any $i,j>0$, $r\geq0$, $\alpha\in\CH^i(X)$ and $\beta\in\CH^j(X)$, the degree of the element
$(\alpha\times\beta)\cdot\sigma^r\in\CH^{i+j+br}(X\times X)$ is divisible by $p^2$.
\end{cor}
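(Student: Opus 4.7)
The plan is to reduce the statement to Corollary~\ref{preasb} by passing to $F(X)$ and expanding $\sigma^r$ via the binomial theorem using the defining identity $\sigma_{F(X)}=1\times H-H\times 1$ of a special correspondence.

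Since the degree of a $0$-cycle is preserved under arbitrary base change, I would first replace the class $(\alpha\times\beta)\cdot\sigma^r$ on $X\times X$ by its pullback to $(X\times X)_{F(X)}=X_{F(X)}\times X_{F(X)}$; it then suffices to show that the resulting degree is divisible by $p^2$. We may assume that the total codimension $i+j+br$ equals $2d$, since otherwise the class is not a $0$-cycle and its degree vanishes.

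Next, using anti-symmetry of $\sigma$ together with condition~(1) of the definition of a special correspondence, we have $\sigma_{F(X)}=1\times H-H\times 1$. The binomial formula (valid since $1\times H$ and $H\times 1$ commute in the Chow ring of $X_{F(X)}\times X_{F(X)}$) then yields
\[
\sigma^r_{F(X)}=\sum_{k=0}^{r}(-1)^k\binom{r}{k}\,H^k\times H^{r-k}.
\]
Multiplying through by $(\alpha\times\beta)_{F(X)}=\alpha_{F(X)}\times\beta_{F(X)}$, and using compatibility of intersection with external products, I express the base-changed class as a $\Z$-linear combination of terms of the form $(\alpha\times\beta)_{F(X)}\cdot(H^k\times H^{r-k})$ for $k=0,\dots,r$.

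Finally, each such summand has degree divisible by $p^2$ by Corollary~\ref{preasb} (whose hypotheses $i,j>0$ and $k,r-k\ge 0$ are visibly satisfied), so the same divisibility holds for their $\Z$-linear combination, proving the assertion. There is no serious obstacle in this argument: once the identity $\sigma_{F(X)}=1\times H-H\times 1$ is invoked, the remainder is a purely formal binomial expansion combined with the divisibility already established in Corollary~\ref{preasb}.
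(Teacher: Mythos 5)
Your proposal is correct and follows essentially the same route as the paper: the paper's own proof is the one-line observation that $\sigma^r_{F(X)}$ is a linear combination of the classes $H^k\times H^l$ with $k+l=r$, after which Corollary \ref{preasb} applies termwise. Your version merely makes the binomial expansion and the base-change-invariance of degree explicit.
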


\begin{proof}
The element $\sigma^r_{F(X)}$ is a linear combination of $H^k\times H^l$
(with $k+l=r$).
\end{proof}

\begin{lemma}
\label{Hrat}
For any $i>0$, the element $\Steen^i(H)\in\Ch^{b+i}(X_{F(X)})$ is
rational.
\end{lemma}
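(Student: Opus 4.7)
Plan: The strategy is to realize $\Steen^i(H)$ as the generic-fibre pull-back of a rational (i.e.\ $F$-defined) class on $X\times X$, and then to leverage the Rost motive structure to upgrade this to genuine $F$-rationality.

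The first step is the observation that, since $\bar\sigma\in\Ch^b(X\times X)$ is defined over $F$, so is $\Steen^i(\bar\sigma)\in\Ch^{b+i}(X\times X)$ by naturality of Steenrod. Applying $\Steen^i$ to the base-changed identity $\sigma_{F(X)}=1\times H-H\times 1$, using the Cartan formula for external products together with the vanishing $\Steen^j(1)=0$ for $j>0$, and invoking naturality of $\Steen^i$ under the flat base-change morphism, I obtain
\[
\Steen^i(\bar\sigma)_{F(X)} \;=\; 1\times \Steen^i(H) - \Steen^i(H)\times 1
\]
in $\Ch^{b+i}((X\times X)_{F(X)})$.

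Next, I pull back along the generic-fibre inclusion $\iota\colon X_{F(X)}\to X\times X$ of the first projection. Being a filtered limit of the open immersions $U\times X\hookrightarrow X\times X$, this $\iota$ is flat, so Steenrod operations commute with $\iota^*$. Since $p_1\circ\iota$ factors through the zero-dimensional generic point of $X$, the pull-back $\iota^*$ kills $\alpha\times 1$ for any $\alpha$ of positive codimension, while $\iota^*(1\times\alpha)=\alpha$. Applied to the identity above, this yields
\[
\Steen^i(H) \;=\; \iota^*\bigl(\Steen^i(\bar\sigma)\bigr),
\]
exhibiting $\Steen^i(H)$ as the restriction to the generic fibre of a rational class on $X\times X$.

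The main obstacle is that $\iota^*$ does not automatically send rational classes on $X\times X$ to rational classes on $X_{F(X)}$ — for instance, $\iota^*[\Delta_X]=[\eta_0]$ fails to be rational precisely when $\deg\Ch_0(X)\subseteq p\Z$, which is the case of interest. To complete the argument I would use the symmetric rational Rost projector $\rho\in\CH^d(X\times X)\otimes\Z_{(p)}$ produced by Lemma~\ref{Rost and special}, together with the anti-symmetry of $\Steen^i(\bar\sigma)$ (inherited from that of $\sigma$) and the divisibility estimates of Corollaries~\ref{preasb}--\ref{asb}, to decompose $\Steen^i(\bar\sigma)$ into a piece in $p_2^*\Ch^{b+i}(X)$ (automatically rational upon pull-back) and a piece in $\ker\iota^*$. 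The technique parallels the proof of Lemma~\ref{pH}, where composing with $\rho=\sigma^{p-1}$ and pulling back along the diagonal produced the rational class $pH$; here a refinement avoiding the extraneous factor of $p$ — made available by the better codimensional position of $\Steen^i(\bar\sigma)$ for $i>0$ and by the Rost-motive decomposition of $\Ch(X_{F(X)})$ — should yield a genuine rational lift of $\Steen^i(H)$. Verifying that this Rost-projector construction isolates exactly $\Steen^i(H)$ without spurious error terms is the technical crux of the proof.
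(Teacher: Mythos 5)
You begin exactly as the paper does: $S^i(\sigma)$ is $F$-rational and restricts over $F(X)$ to $1\times S^i(H)-S^i(H)\times 1$, and you rightly observe that restricting to the generic fibre does not by itself give rationality of $S^i(H)$. The problem is that the completion you sketch leaves unproved precisely the step that carries all the content. Carrying out your plan amounts to composing $S^i(\sigma)$ with $\rho=\sigma^{p-1}$: over $F(X)$ this composition equals $\deg(H^{p-1})\cdot\bigl(1\times S^i(H)\bigr)$ minus a combination of the terms $\deg\bigl(H^{p-1-k}\,S^i(H)\bigr)\,H^k\times 1$, and after pulling back along the diagonal the unwanted terms contribute multiples of the \emph{non-rational} classes $H^k$. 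So the whole lemma reduces to showing that $\deg\bigl(H^{j}S^i(H)\bigr)\equiv 0\pmod p$ for all $j\ge 0$. None of the tools you name can deliver this: Corollary \ref{ratF(X)rat} needs one factor of the $0$-cycle to be $F$-rational of positive codimension, and Corollaries \ref{preasb} and \ref{asb} need both factors rational, whereas here neither $H^{j}$ nor $S^i(H)$ is known to be rational --- invoking rationality of $S^i(H)$ at this point would be circular. (Lemma \ref{pH} gives only that $pH$ is rational, which yields the useless congruence $p\deg\bigl(H^{j}S^i(H)\bigr)\equiv0\pmod{p}$.)

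The paper supplies the missing step by a separate contradiction argument: if some $\deg\bigl(H^{j}S^i(H)\bigr)$ were prime to $p$, then for degree reasons $i=bk$ with $k>0$ and $j=p-2-k$, and the $F$-rational composition $S^i(\sigma)\circ\sigma^{p-1-k}$, computed over $F(X)$ and pulled back along the diagonal, would exhibit a unit multiple of $H$ as a rational class modulo $p$; this would force $1\in\deg\operatorname{Ch}_0(X)$, contradicting the standing assumption $\deg\operatorname{CH}_0(X)\subset p\mathbb{Z}$. Once these degrees are killed, the composition $S^i(\sigma)\circ\sigma^{p-1}$ pulled back along the diagonal equals $\deg(H^{p-1})\,S^i(H)$ modulo $p$ with $\deg(H^{p-1})$ a unit in $\mathbb{F}_p$, and the lemma follows. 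Your proposal needs this degree computation (or an equivalent substitute) before it is a proof.
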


\begin{proof}
We prove first that $\deg(H^j\Steen^i(H))\equiv0\pmod{p}$ for any
$j\geq0$.
Assume the contrary.
Then $i$ is divisible by $b$, say, $i=bk$ for some $k>0$, and
$j=p-2-k$.
Computing the composition $\Steen^i(\sigma)\compose\sigma^{p-1-k}\in\Ch(X\times X)$
over $F(X)$ we get a multiple with a nonzero coefficient $\in\F_p$ of $H\times 1\in\Ch^b(X\times
X)$.
Taking the pull-back with respect to the diagonal
shows that the class of $H$ modulo $p$ is rational and therefore $1\in\deg\Ch_0(X)$, a contradiction.

Now the composition $\Steen^i(\sigma)\compose\sigma^{p-1}\in\Ch(X\times
X)$ computed over $F(X)$ gives a multiple with a nonzero coefficient of
$1\times\Steen^i(H)$, and we finish the proof pulling back with respect to the diagonal of
$X$.
\end{proof}

We set $\rho:=\sigma^{p-1}\in\CH^d(X\times X)$.
Since $\rho_{F(X)}$ is congruent modulo $p$ to the alternating sum
(\ref{alt_sum}),
any element $x\in\Ch(X\times Y)$ of the form $x= x'\compose\rho$ for some
$x'\in\Ch(X\times Y)$ decomposes over $F(X)$ as
\begin{equation}
\label{x_F(X)}
x_{F(X)}=1\times x_0+H\times x_1+\dots+H^{p-1}\times x_{p-1}
\end{equation}
with some
$x_0,x_1,\dots,x_{p-1}\in\Ch(Y_{F(X)})$.
Note that $x_0$ coincides with the image of $x$ in $\Ch(Y_{F(X)})$.
The key statement in the proof of Theorem \ref{mainSI} is the following
Proposition
(where the $A$-triviality assumption is not needed):

\begin{prop}
\label{mainSIprop}
Let $X$ be a smooth complete irreducible variety of dimension $d=p^n-1$
possessing a special correspondence $\sigma\in\CH^b(X\times X)$.
Let $Y$ be a smooth irreducible variety and $x\in\Ch^m(X\times Y)$ an
element of the form $x=x'\compose\rho$.
Then for any $s>(m-b)(p-1)$ the element $\Steen^s(x_0)\in\Ch^{m+s}(Y_{F(X)})$ is rational
up to the class modulo $p$ of an exponent $p$ element.
\end{prop}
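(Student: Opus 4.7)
The plan is to apply $\Steen^s$ to the rational class $x \in \Ch^m(X \times Y)$ and exploit the Cartan formula together with the hypothesis $s > (m-b)(p-1)$ to isolate $\Steen^s(x_0)$ from the Rost-motive decomposition of $x_{F(X)}$.

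I first record two codimension-driven vanishings. Since $x_i \in \Ch^{m-bi}(Y_{F(X)})$ has codimension at most $m-b$ for every $i \geq 1$, the hypothesis $s > (m-b)(p-1) \geq (m-bi)(p-1)$ places $s$ strictly above the top Steenrod index on $\Ch^{m-bi}$, so $\Steen^s(x_i) = 0$ for every $i \geq 1$. Second, any nonzero summand $\Steen^j(H^i) \times \Steen^k(x_i)$ with $i \geq 2$ and $j+k = s$ would require both $bi + j \leq d = b(p-1)$ (for $\Steen^j(H^i) \ne 0$) and $k \leq (m-bi)(p-1)$ (for $\Steen^k(x_i) \ne 0$); summing gives $s \leq (m+b)(p-1) - bip < (m-b)(p-1)$, contradicting the hypothesis. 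Hence $\Steen^s(H^i \times x_i) = 0$ for all $i \geq 2$.

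Expanding $\Steen^s(x_{F(X)})$ by the Cartan formula on $x_{F(X)} = \sum_{i=0}^{p-1} H^i \times x_i$ and applying these vanishings collapses the sum to the key identity
\[
\Steen^s(x)_{F(X)} \;=\; 1 \times \Steen^s(x_0) \;+\; \sum_{\substack{j \geq 1 \\ j+k=s}} \Steen^j(H) \times \Steen^k(x_1),
\]
in which $\Steen^s(x_1) = 0$ was used to discard the $j = 0$ term. By Lemma \ref{Hrat} each $\Steen^j(H)$ with $j \geq 1$ admits a rational lift $R_j \in \Ch^{b+j}(X)$. Choosing any lift $\tilde{x}_1 \in \Ch^{m-b}(X \times Y)$ of $x_1$ (the generic-fibre pullback $\Ch(X \times Y) \to \Ch(Y_{F(X)})$ is surjective), I would form the rational class
\[
Z \;:=\; \sum_{j \geq 1} R_j \cdot \Steen^{s-j}(\tilde{x}_1) \;\in\; \Ch^{m+s}(X \times Y),
\]
with $R_j$ understood to be pulled back via the first projection. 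Then $\Steen^s(x) - Z$ is rational, and a direct expansion of $(\Steen^s(x) - Z)_{F(X)}$ produces $1 \times \Steen^s(x_0)$ plus correction terms coming from the deviation of $(\tilde{x}_1)_{F(X)}$ from $1 \times x_1$ in its Rost-motive decomposition.

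The main obstacle will be the control of these corrections. Their presence reflects the failure of $H$ itself to be rational: by Lemma \ref{pH} only $pH$ is rational, so any rational approximation of the $H^l$-components ($l \geq 1$) of $(\tilde{x}_1)_{F(X)}$ carries an exponent-$p$ defect. I would track these defects by composing $\Steen^s(x) - Z$ with the Rost projector $\pi$ from Lemma \ref{Rost and special} to project onto the Rost-motive summand, and by using that $\deg(H^{p-1})$ is coprime to $p$ (from the definition of special correspondence), to show that the residual discrepancy is the mod-$p$ reduction of an exponent-$p$ class in $\CH^{m+s}(Y_{F(X)})$. A concluding pushforward/specialisation argument then extracts from the rational class $\Steen^s(x) - Z$ a rational representative in $\Ch(Y_{F(X)})$ equal to $\Steen^s(x_0)$ modulo the stated exponent-$p$ class. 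The hard part is this final bookkeeping of exponent-$p$ contributions, stemming from the non-rationality of $H$ despite that of $pH$.
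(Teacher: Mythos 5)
Your opening computation is sound and in fact occurs, in fragmented form, inside the paper's own proof: expanding $\Steen^s(x_{F(X)})$ by the Cartan formula along $x_{F(X)}=\sum_i H^i\times x_i$ and using the instability bound ($\Steen^k=0$ on $\Ch^c$ for $k>c(p-1)$) does reduce everything to $1\times\Steen^s(x_0)$ plus $\sum_{j\geq1}\Steen^j(H)\times\Steen^{s-j}(x_1)$. But the second half of your plan does not close, and the obstruction is not the ``bookkeeping of exponent-$p$ contributions'' you point to: it is the descent from $X\times Y$ to $Y$. Subtracting your class $Z=\sum_{j\geq1}R_j\cdot\Steen^{s-j}(\tilde x_1)$ accomplishes nothing, because each $R_j$ has positive codimension on $X$, so $Z$ dies under the pull-back $\Ch(X\times Y)\to\Ch(Y_{F(X)})$ and the image of $\Steen^s(x)-Z$ is still exactly $\Steen^s(x_0)$. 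Knowing that $\Steen^s(x_0)$ is the image of some class defined over $F$ on $X\times Y$ is vacuous (that pull-back is surjective); \emph{rational} means coming from $\CH^{m+s}(Y)$, and your proposal contains no device that produces an element of $\Ch(Y)$ rather than of $\Ch(X\times Y)$. The only such device is $\pr_{2*}(\zeta\cdot{-})$ with $\zeta\in\CH^d(X)$, and since $\deg\CH_0(X)\subset p\Z$ any such push-forward of $1\times\Steen^s(x_0)$ is a multiple of $p$, hence invisible modulo $p$; composing with the Rost projector, which you invoke at the end, keeps you on $X\times Y$ and does not help.

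The paper breaks this deadlock with two ingredients absent from your plan. First, the Riemann--Roch-type identity $\pr_{2*}\sum_i b_i\cdot\Steen^{d+s-i}(x)=\Steen^{d+s}\bigl(\pr_{2*}(x)\bigr)$ for the characteristic classes $b_i=b_i(-T_X)$, whose extreme terms vanish by your codimension bounds; after substituting $x\compose\rho$ and lifting to integral representatives, this yields the crucial \emph{integral} statement that the sum (\ref{3eq}) is divisible by $p$ in $\CH(Y)$. Second, the degree formula $\deg b_d(-T_X)\not\equiv0\pmod{p^2}$ of \cite[Theorem 9.9]{markus}. One then shows that over $F(X)$ the sum is congruent to $\deg(b_d)\Steen^s_{x_0}$ modulo $I=p^2\CH(Y_{F(X)})+p\Im\bigl(\CH(Y)\to\CH(Y_{F(X)})\bigr)$ --- this is where your Cartan computation and Lemmas \ref{pH} and \ref{Hrat} enter --- and since $\deg(b_d)$ is divisible by $p$ but not by $p^2$, dividing the resulting relation by $p$ produces a rational class plus an element annihilated by $p$. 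So the exponent-$p$ defect comes from the exact $p$-divisibility of $\deg b_d$, not from the non-rationality of $H$ versus $pH$. Without the push-forward identity and the degree formula the conclusion is out of reach, so what is missing from your argument is its essential engine rather than a routine verification.
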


\begin{proof}
For any $x\in\Ch(X\times Y)$, we have the relation
$$
\pr_{2*}\sum_{0\leq i\leq d+s}b_i\cdot\Steen^{d+s-i}(x)=\Steen^{d+s}\big(\pr_{2*}(x)\big),
$$
where $b_i:=b_i(-T_X)\in\Ch^i(X)$ (this is rather $b_i(T_X)$ in notation of \cite{markus}),
where $b_i(\cdot)$ are the components of the multiplicative Chern class as defined in
\cite[\S6.1]{Boisvert}.
Note that the product in the expression $b_i\cdot\Steen^{d+s-i}(x)$ is the product of
the $\Ch(X)$-module $\Ch(X\times Y)$ so that the expression actually means
$(b_i\times Y)\cdot\Steen^{d-i}(x)$ (now in the sense of the product in the ring $\Ch(X\times Y)$).

Now we assume that $x\in\Ch^m(X\times Y)$ with $m$ such that $s>(m-b)(p-1)$.
In this case $\pr_{2*}(x)\in\Ch^{m-d}(Y)$ and $\Steen^{d+s}\big(\pr_{2*}(x)\big)=0$
because $d+s>s>(m-b)(p-1)\geq(m-d)(p-1)$.
Besides, $\Steen^{d+s}(x)=0$ because $d+s=b(p-1)+s>m(p-1)$.
Therefore we have
\begin{equation}
\label{starteq}
\pr_{2*}\sum_{0<i\leq d+s}b_i\cdot\Steen^{d+s-i}(x)=0.
\end{equation}

Putting $x\compose\rho$ in place of $x$ in relation (\ref{starteq})
and using the equality
$$
\Steen^\bullet(x\compose\rho)=\big(b_\bullet\cdot\Steen^\bullet(x)\big)\compose\Steen^\bullet(\rho),
$$
(together with the projection formula)
we rewrite the left part of relation (\ref{starteq}) as
\begin{multline*}
\pr_{2*}\sum_{\substack{i+j+k+l=d+s \\ i>0,\,j,\,k,\,l\geq0}}
b_i\cdot\Big(\big(b_j\Steen^k(x)\big)\compose\Steen^l(\rho)\Big)\\
\begin{aligned}
&=\pr_{2*}\sum\pr_{13*}\left(\Big(\big(b_i\cdot\Steen^l(\rho)\big)\times[Y]\Big)
\cdot\Big([X]\times\big(b_j\cdot\Steen^k(x)\big)\Big)\right)\\
&=\pr_{2*}\sum\pr_{23*}\left(\Big(\big(b_i\cdot\Steen^l(\rho)\big)\times[Y]\Big)
\cdot\Big([X]\times\big(b_j\cdot\Steen^k(x)\big)\Big)\right)\\
&=\pr_{2*}\sum \Big(b_j\cdot\Steen^k(x)\Big)\cdot
\left(\Big(\pr_{2*}\big(b_i\cdot\Steen^l(\rho)\big)\Big)\times[Y]\right)\\
&=\pr_{2*}\sum b_j\cdot \Big(\pr_{2*}\big(b_i\cdot\Steen^l(\rho)\big)\Big) \cdot
\Steen^k(x).
\end{aligned}
\end{multline*}
Therefore
\begin{equation}
\label{2eq}
\pr_{2*}\sum_{\substack{i+j+k+l=d+s \\ i>0,\,j,\,k,\,l\geq0}}
b_j\cdot \Big(\pr_{2*}\big(b_i\cdot\Steen^l(\rho)\big)\Big) \cdot \Steen^k(x)=0
\end{equation}
for any $x\in\Ch^m(X\times Y)$.

We recall that $\rho=\sigma^{p-1}$.
Therefore
$$
\Steen^l(\rho)=\Steen^l(\sigma^{p-1})=\sum_{l_1+\dots+l_{p-1}=l}
\Steen^{l_1}(\sigma)\cdot\ldots\cdot\Steen^{l_{p-1}}(\sigma).
$$
Relation (\ref{2eq}) rewrites as
\begin{equation*}
\pr_{2*}\sum_{\substack{i+j+k+l_1+\dots+l_{p-1}=d+s \\ i>0;\;j,\,k,\,l_1,\dots,l_{p-1}\geq0}}
b_j\cdot \Big(\pr_{2*}\big(b_i\cdot\Steen^{l_1}(\sigma)\cdot\ldots
\cdot\Steen^{l_{p-1}}(\sigma)\big)\Big) \cdot \Steen^k(x)=0.
\end{equation*}
Therefore, fixing for each integer $k\geq0$ an integral representative $\Steen^k_x\in\CH^{m+k}(X\times
Y)$ of $\Steen^k(x)\in\Ch^{m+k}(X\times Y)$
as well as an integral
representative $\Steen^k_{\sigma}\in\CH^{b+k}(X\times X)$
of $\Steen^k(\sigma)\in\Ch^{b+k}(X\times X)$
(where we choose $\sigma$ for $\Steen^0_\sigma$),
we get that the sum
\begin{equation}
\label{3eq}
\sum_{\substack{i+j+k+l_1+\dots+l_{p-1}=d+s \\ i>0;\;j,\,k,\,l_1,\dots,l_{p-1}\geq0}}
\pr_{2*}\left(
b_j\cdot \Big(\pr_{2*}\big(b_i\cdot\Steen^{l_1}_\sigma\cdot\ldots
\cdot\Steen^{l_{p-1}}_{\sigma}\big)\Big) \cdot \Steen^k_x\right)
\end{equation}
is divisible by $p$ in $\CH(Y)$.
Taking for $x$ an element of the form $x=x'\compose\rho$ with some $x'\in\Ch^m(X\times
Y)$ and passing over $F(X)$, we are going to show that the sum of (\ref{3eq})
is equal modulo $I:=p^2\CH(Y_{F(X)})+p\Im\big(\CH(Y)\to\CH(Y_{F(X)})\big)$
to the class of $\deg(b_d)\Steen^s_{x_0}$, where $\Steen^s_{x_0}\in\CH^{m+s}(Y_{F(X)})$
is an integral representative of $\Steen^s(x_0)$.
More precisely, we show that the summand for $i=d$ and $k=s$ modulo $I$ is
$\deg(b_d)\Steen^s_{x_0}$
(up to multiplication by a prime to $p$ integer)
while each other summand modulo $I$ is $0$.
Since the integer $\deg b_d$ is not divisible by $p^2$ (see \cite[Theorem 9.9]{markus}), we will
get
that $\Steen^s(x_0)$ is rational up to the classes modulo $p$ of an element of
exponent $p$.

For $i=d$ and $k=s$ we have $j=l_1=\dots=l_{p-1}=0$, and the corresponding summand
of (\ref{3eq}) is equal to
\begin{equation}
\label{jl0}
\pr_{2*}\left(
\pr_{2*}(b_d\cdot\rho) \cdot \Steen^s_x\right)_{F(X)}.
\end{equation}
Since $\deg(b_d)$ is divisible by $p$ and $\rho_{F(X)}$ is congruent modulo $p$ to
(\ref{alt_sum}), the factor $\pr_{2*}(b_d\cdot\rho)$
is congruent modulo $I$ to $\deg(b_d)\cdot H^{p-1}$.
Taking into account the decomposition (\ref{x_F(X)}) of $x_{F(X)}$, it follows that (\ref{jl0})
is congruent modulo $I$ to $\deg(b_d)\Steen^s_{x_0}$
up to multiplication by the prime to $p$ integer $\deg(H^{p-1})$.
Below we are assuming that $i\ne d$.

For $l=0$ (where $l:=l_1+\dots+l_{p-1}$), that is to say, for
$l_1=\dots=l_{p-1}=0$, an arbitrary summand we get is of the form
$$
\pr_{2*}\left(
b_j\cdot \pr_{2*}\big(b_i\cdot\rho\big) \cdot
\Steen^k_x\right)_{F(X)}
$$
(with $i+j+k=d+s$).
Note that $\pr_{2*}\big(b_i\cdot\rho\big)_{F(X)}=0$
if $i$ is not divisible by $b$.
Otherwise, since $i>0$ and $\rho_{F(X)}$ is congruent modulo $p$ to (\ref{alt_sum}),
$\pr_{2*}\big(b_i\cdot\rho\big)_{F(X)}$ is  by Corollary \ref{ratF(X)rat} congruent modulo $I$ to a multiple of
$pH^{i/b}$ so that we only need to show that
the element
\begin{equation}
\label{elm-modp}
\pr_{2*}\left(
b_j\cdot H^{i/b} \cdot
\Steen^k(x)\right)_{F(X)}\in\Ch(Y_{F(X)})
\end{equation}
is rational.

If $j>0$, then computing $\Steen^k(x)_{F(X)}=\Steen^k(x_{F(X)})$
via the decomposition (\ref{x_F(X)}) of $x_{F(X)}$  and using Corollary
\ref{ratF(X)rat}, we see that
the element (\ref{elm-modp}) is $0$.
Let us assume that $j=0$ and show that (\ref{elm-modp}) is $0$ as well.
It suffices to show this with $x$ replaced by an arbitrary summand of the decomposition
(\ref{x_F(X)}).
Putting $1\times x_0$ (the first summand of the decomposition) in place of $x$, we get
$$
\pr_{2*}\big(H^{i/b}\cdot\Steen^k(1\times x_0)\big)=
\pr_{2*}\big(H^{i/b}\times \Steen^k(x_0)\big)
$$
which is $0$ because $i\ne d$.
Putting any other summand $H^r\times x_r$ ($r\geq1$) of the decomposition,
we get a multiple of $\Steen^{s+rb}(x_r)$ which is $0$ because
$x_r\in\Ch^{m-rb}(Y_{F(X)})$ and $s+rb>s>(m-b)(p-1)\geq(m-rb)(p-1)$.

It remains to consider the case of $l>0$.
We have
$$
\Steen^l(\sigma)_{F(X)}=1\times\Steen^l(H)-\Steen^l(H)\times1
$$
and $\Steen^l(H)$ is rational by Lemma \ref{Hrat}.
Therefore
\begin{equation}
\label{decomp}
(\Steen^l_\sigma)_{F(X)}=p\theta_l+1\times\Steen^l_H-\Steen^l_H\times1
\end{equation}
for some $\theta_l\in\CH^{b+l}(X\times X)_{F(X)}$ and a {\em rational}
integral representative $\Steen^l_H$ of $\Steen^l(H)$.

Let us decompose as in (\ref{decomp}) every factor with positive superscript of the product
$$
(\Steen^{l_1}_\sigma\cdot\ldots\cdot\Steen^{l_{p-1}}_\sigma)_{F(X)}
$$
(appearing in (\ref{3eq})$_{F(X)}$),
expand the product and consider an arbitrary summand $P$ of the expansion.
We are going to show that the element
\begin{equation}
\label{Peq}
\pr_{2*}\Big(
b_j\cdot \pr_{2*}(b_i\cdot P) \cdot (\Steen^k_x)_{F(X)}\Big)
\end{equation}
modulo $I$ is $0$.

If $P$ contains the factor $p\theta_{?}$ (at least) two times,
then the result is divisible by $p^2$ so that (\ref{Peq}) is indeed $0$ modulo $I$.

Assume that the factor $p\theta_{?}$ is present precisely one time in $P$.
So, we already have divisibility by $p$ and it suffices to show that the
element
\begin{equation}
\label{elmnt}
\pr_{2*}\big(t\cdot\Steen^k(x_{F(X)})\big)\in\Ch^{m+s}(Y_{F(X)})
\end{equation}
is $0$
for an element $t\in\Ch^{i+j+l}(X_{F(X)})$ such that
$p\cdot t=b_j\cdot \pr_{2*}(b_i\cdot P)$.
Replacing $x_{F(X)}$ in (\ref{elmnt}) by an arbitrary summand of the decomposition
(\ref{x_F(X)}), we get $0$ always
(and for an arbitrary $t\in\Ch^{i+j+l}(X_{F(X)})$)
with only one possible exception: for
the summand $1\times x_0$, namely.

Putting $1\times x_0$ in place of $x_{F(X)}$ in (\ref{elmnt}), we may get
a nonzero result only if
$k=s$, that is, $i+j+l=d$.
In this case $t$ is a $0$-cycle class and the element (\ref{elmnt}) is divisible by its degree.
It suffices therefore to show that
the degree $\deg\Big(b_j\cdot\pr_{2*}(b_i\cdot P)\Big)$ is divisible by $p^2$.
If $j>0$, then the degree is divisible by $p^2$ by Corollary \ref{ratF(X)rat}
(recall that $P$ is divisible by $p$).
Therefore we may assume that $j=0$, that is $i+l=d$.
In this case $b_i\cdot P$ is a $0$-cycle class (on $X\times X$) and
the corresponding summand of (\ref{3eq}) is divisible by its degree.
But degree of $b_i\cdot P$ coincides with degree of
$b_i\cdot\pr_{1*}(P)$ which is divisible by $p^2$ by Corollary
\ref{ratF(X)rat} (we recall that $i>0$ and that $P$ is already divisible by
$p$).

At last, let us assume that $P$ contains no $\theta_?$ as a factor.
Then $P$ must contain at least one factor of the type $1\times\Steen^?_H$
(we call it a {\em second type factor} because it corresponds to the second summand of the decomposition
(\ref{decomp})) or of the type $\Steen^?_H\times1$ (a {\em third type factor}).
Moreover, any factor of $P$ is either $\sigma$ or $1\times\Steen^?_H$
(a second type factor)
or $\Steen^?_H\times1$ (a third type factor).
It follows by Corollary \ref{ratF(X)rat} that
$\pr_{2*}(b_i\cdot P)$ is divisible by $p$.
Therefore we may assume that $k=s$.
The element (\ref{Peq}) is then divisible by degree
of the $0$-cycle class
$(b_i\times b_j)\cdot P$.
This degree is divisible by $p^2$ if $j>0$ or if $P$ contains a factor of
the second type by Corollary \ref{asb}.

In the remaining case we have $j=0$, $i+l=d$, any factor of $P$ equals $\sigma$
or has the third type
with at least one factor of the third type.
Therefore $P=(\alpha\times1)\cdot\sigma^r$ with some $r<p-1$ and some
$\alpha\in\CH(X_{F(X)})$ (which is in fact rational but we do not care about this anymore).
It follows that the element $\pr_{2*}(b_i\cdot P)$ is a $0$-cycle class
and the element (\ref{Peq}) is divisible by its degree which is
$$
\deg\big(\pr_{2*}(b_i\cdot P)\big)=\deg\big(\pr_{1*}(b_i\cdot P)\big)=
\deg\big(b_i\cdot \pr_{1*}(P)\big),
$$
but already the element
$$
\pr_{1*}(P)=
\alpha\cdot\pr_{1*}(\sigma^r)
$$
is trivial because $\pr_{1*}(\sigma^r)=0$ for $r<p-1$.
\end{proof}

We finish now the proof of Theorem \ref{mainSI}.
Let $x$ be an element of $\Ch^m(X\times Y)$ mapped to
$y\in\Ch^m(Y_{F(X)})$.
Since $X$ is $A$-trivial, the element $(cx)\compose\rho$,
where $c\in\F_p$ is the inverse to the class modulo $p$ of the integer
$\deg(H^{p-1})$,
is also mapped to $y$ (see Lemma \ref{criter}).
Replacing $x$ by $(cx)\compose\rho$, we apply Proposition \ref{mainSIprop} to the
new $x$ getting the desired result.
\end{proof}

\subsection
{Generators of Chow groups of Rost motives}

In this appendix, we provide an elementary construction of homogeneous generators
of the Chow group of a Rost motive in the spirit of \cite{MR2015051}.
Here we have $\Lambda=\Z_{(p)}$ so that $\CH$ stands for the Chow group with
coefficients in $\Z_{(p)}$.

Let $X$ be a standard norm variety of dimension $d:=p^n-1$
($p$ a prime, $n\geq1$).
Let $\sigma\in\CH^b(X\times X)$, $b:=d/(p-1)$, be a special
correspondence on $X$, $H\in\CH^b(X_{F(X)})$ the image of $\sigma$.
Let $\rho\in\CH^d(X\times X)$ be a symmetric (Rost) projector on $X$ such that
$$
\rho_{F(X)}=(1\times H^{p-1}+H\times H^{p-2}+\dots+H^{p-1}\times1)/\deg(H^{p-1})
$$
(see Lemma \ref{Rost and special}).

Let $m$ be an integer satisfying $1\leq m\leq n-1$.
Assume that there exists a norm variety $Y$ of dimension $p^m-1$ with a morphism $f:Y\to
X$.
(This assumption is satisfied if the base field $F$ is $p$-special and has characteristic $0$ by
\cite[Corollary 1.22]{MR2220090}.)

\begin{prop}
\label{prop s.16}
For any $r$ with $1\leq r\leq p-1$, the element
$$
\alpha:=\rho_*(\sigma^r)_*f_*[Y]\in\CH_{p^m-1+(p-1-r)b}(X)
$$
is of order $p$.
\end{prop}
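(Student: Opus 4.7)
We show separately that $p\alpha=0$ and that $\alpha\ne 0$.

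\emph{Order divides $p$.} Since $\rho$ is a projector, $\alpha$ lies in the image $\rho_*\CH(X)$, which is identified with the Chow group of the Rost motive $(X,\rho)$. A dimension count, using $(p-1)b=p^n-1$, gives the codimension of $\alpha$ in $X$ as
\[
(p^n-1)-\bigl(p^m-1+(p-1-r)b\bigr)=rb-p^m+1.
\]
With $k:=r$ and $i:=m$ in the admissible ranges $1\le k\le p-1$ and $1\le i\le n-1$, Theorem \ref{main} yields $\CH^{rb-p^m+1}(X,\rho)=\Z/p\Z$, so $p\alpha=0$.

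\emph{Non-vanishing.} The plan is to trace $\alpha$ through the chain of isomorphisms implicit in the proof of Theorem \ref{main}. Iterated application of the exact triangle \eqref{second}, together with the vanishing of $\CH^j(\mathcal{X})$ and $H^{2j+1,j}(\mathcal{X},\Z_{(p)})$ in the relevant codimensions (Lemmas \ref{ch} and \ref{chowplus}), furnishes
\[
\CH^{rb-p^m+1}(X,\rho)\xrightarrow{\;\sim\;}\CH^{b-p^m+1}(X,\rho)\xrightarrow{\;\sim\;}(\Z/p\Z)\widetilde Q_m(\delta).
\]
Geometrically, each connecting isomorphism is realised by the correspondence action of $\sigma$, so that the factor $(\sigma^r)_*$ in the definition of $\alpha$ accounts for the $r-1$ iterated connecting maps, up to a unit scalar coming from $\deg(H^{p-1})$ via Lemma \ref{Rost and special}. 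The initial input $\rho_*f_*[Y]$ then reduces, modulo $p$, to a non-zero scalar multiple of $\widetilde Q_m(\delta)$, because $Y$ is a norm variety of the sub-symbol $\{L,a_1,\ldots,a_m\}$ and its fundamental class is detected by the Milnor operation $Q_m$.

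\emph{Main obstacle.} The principal difficulty is this last identification — matching $\rho_*f_*[Y]\bmod p$ with a generator of $(\Z/p\Z)\widetilde Q_m(\delta)$. This requires tracking the cycle class of $Y$ through the construction of the motivic cohomology class $\widetilde Q_m(\delta)$ on the simplicial scheme $\mathcal{X}$, using both the chain-lemma structure of $f\colon Y\to X$ from \cite{MR2220090} and the explicit form of the connecting maps in the exact triangles \eqref{first} and \eqref{second}. An alternative route that avoids this identification is to produce directly an auxiliary cycle $\gamma\in\CH^{p^m-1+(p-1-r)b}(X)$ with $\deg(\alpha\cdot\gamma)\not\equiv 0\pmod p$, built from complementary powers $\sigma^{p-1-r}$ and a second sub-norm-variety.
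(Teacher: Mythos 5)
Your first half is correct but takes a different, heavier route than the paper: you deduce $p\alpha=0$ from the computation of the Chow groups of the Rost motive (Theorem \ref{main}), whereas the paper argues directly that $\alpha_{F(X)}=0$ --- over $F(X)$ the correspondence $\rho\circ\sigma^{r}$ decomposes into terms $H^{i}\times H^{r-i}$, and each $(H^{i}\times H^{r-i})_{*}f_{*}[Y]$ vanishes because $\dim Y=p^{m}-1$ is never equal to $bi$ (as $0<p^{m}-1<b$) --- and then kills $p\alpha$ by transfer through a closed point of degree $p$. Your route is logically admissible (the Remark following the proposition invokes Theorem \ref{main} anyway), but it trades a two-line computation for the full motivic-cohomology machinery.

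The non-vanishing half has a genuine gap, which you have essentially flagged yourself. The assertions that the connecting maps in the triangles (\ref{first}) and (\ref{second}) are ``realised by the correspondence action of $\sigma$'' and that $\rho_{*}f_{*}[Y]$ reduces mod $p$ to a nonzero multiple of $\widetilde{Q}_{m}(\delta)$ are exactly what would have to be proved, and nothing in your sketch (or in the paper) supplies this: the proof of Theorem \ref{main} lives entirely on the motivic-cohomology side of the simplicial scheme and never identifies its generators with algebraic cycles --- producing such an identification is the whole point of the proposition. Worse, your proposed fallback cannot work as stated: since $p\alpha=0$ in the Chow group of $X$ with $\mathbb{Z}_{(p)}$-coefficients and $\mathbb{Z}_{(p)}$ is torsion-free, $\deg(\alpha\cdot\gamma)=0$ for \emph{every} $F$-rational class $\gamma$, so no degree pairing over $F$ can detect $\alpha$. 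The paper's argument is built precisely around this obstruction: it applies the Steenrod operation $S^{p^{m}-1}$ to $\alpha$ modulo $p$, picks an integral representative of the result, passes to $F(X)$, and pairs with $H^{p-1-r}$, which exists only over $F(X)$; Corollary \ref{ratF(X)rat} makes the resulting degree well defined modulo $p^{2}$, and a term-by-term expansion of the total Steenrod operation of $\sigma^{r}$ shows that only the $j=0$ term contributes, giving $\deg b^{Y}_{\dim Y}\cdot\deg(H^{p-1})\not\equiv 0\pmod{p^{2}}$. Some argument of this kind --- or a genuine execution of your motivic-cohomology identification --- is indispensable.
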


\begin{rem}
Since $\rho$ is symmetric, we have $\rho_*=\rho^*$
and therefore
$$
\alpha\in\rho^*\CH^{p^m-1+(p-1-r)b}(X)=\CH^{p^m-1+(p-1-r)b}(X,\rho).
$$
Since the Chow group is of order $p$ by Theorem \ref{main}, $\alpha$ is its generator.
By Theorem \ref{main} once again, varying $m$ and $r$, we get generators for the whole torsion
part of the Chow group $\CH^*(X,\rho)$ of the Rost motive.
\end{rem}

\begin{proof}[Proof of Proposition \ref{prop s.16}]
We first check that $p\alpha=0$.
Since $X$ has a closed point of degree $p$, it suffices to check that
$\alpha_{F(X)}=0$.
Over $F(X)$ we have
$$
\alpha_{F(X)}=(\beta)_*f_*[Y],
$$
where $\beta$ is a linear combination of $H^i\times H^{r-i}$ for
$i=0,1,\dots,r$.
This is $0$ because $\dim Y\ne \codim H^i=bi$ for any $i$.
Indeed,
$$
0<\dim Y< p^{n-1}\leq1+p+\dots+p^{n-1}=b.
$$

Now we show that $\alpha\ne0$ as follows:
$$
\alpha\ne0\;\;\Leftarrow\;\;
\alpha\!\!\mod{p}\ne0\;\;\Leftarrow\;\;
\Steen^{p^m-1}(\alpha)\ne0\;\;\Leftarrow\;\;
p^2\not|\deg\Big((\Steen^{p^m-1}_\alpha)_{F(X)}\cdot H^{p-1-r}\Big),
$$
where $\Steen^{p^m-1}_\alpha\in\CH_{(p-1-r)b}(X)$ is a
representative of the modulo $p$ cycle class $\Steen^{p^m-1}(\alpha)\in\Ch_{(p-1-r)b}(X)$.
The implication on the very right comes from Corollary \ref{ratF(X)rat}.
(The degree modulo $p^2$ does not depend on the choice of the integral representative by
Corollary
\ref{ratF(X)rat} once again.)

In order to compute
$\deg\Big((\Steen^{p^m-1}_\alpha)_{F(X)}\cdot H^{p-1-r}\Big)$
we use the formula
$$
\Steen^\bullet\big((\rho\compose\sigma^r)_*f_*[Y]\big)=
\big(\Steen^\bullet(\rho\compose\sigma^r)\big)_*\Steen_\bullet f_*[Y],
$$
where $S_\bullet$ is the total {\em homological} Steenrod
operation.\footnote{This formula is proved in\cite[Proposition 2.1]{Fino2} for $p=2$;
the proof can be easily adapted for arbitrary $p$.}
Since $\Steen_\bullet f_*[Y]=f_*(\Steen_\bullet[Y])=b_\bullet^Y$, where
$b_\bullet^Y:=f_*\big(b_\bullet(-T_Y)\big)$, the degree in question is
congruent modulo $p^2$ to
\begin{multline*}
\deg\sum_{\substack{i+j=p^m-1 \\
i,\,j\,\geq0}}H^{p-1-r}\cdot(\Steen^j_{\rho\compose\sigma^r})_*(b_i^Y)_{F(X)}\\[-1em]
\begin{aligned}
&=\sum\deg\Big(H^{p-1-r}\cdot\pr_{2*}(b_i^Y\cdot\Steen^j_{\rho\compose\sigma^r})_{F(X)}\Big)\\
&=\sum\deg\Big(\big((b_i^Y)_{F(X)}\times
H^{p-1-r}\big)\cdot(\Steen^j_{\rho\compose\sigma^r})_{F(X)}\Big)\\
&=\sum\deg\Big((b_i^Y)_{F(X)}\cdot\pr_{1*}\big((\Steen^j_{\rho\compose\sigma^r})_{F(X)}\cdot(1\times
H^{p-1-r})\big)\Big),
\end{aligned}
\end{multline*}
where $\Steen^j_{\rho\compose\sigma^r}$ are representatives of
$\Steen^j(\rho\compose\sigma^r)$.
Let us choose representatives $\Steen^j_{\sigma^r}$ of
$\Steen^j(\sigma^r)$.
Since $(\rho\compose\sigma^r)_{F(X)}=\sigma^r_{F(X)}$, the classes modulo $p$ of
$(\Steen^j_{\rho\compose\sigma^r})_{F(X)}$ and $(\Steen^j_{\sigma^r})_{F(X)}$
coincide.
It follows by Corollary \ref{ratF(X)rat} that we may remove $\rho$ from the
formula (the resulting degree modulo $p^2$ is not changed).

Taking $j=0$, we get the product $\deg b_{\dim Y}^Y\cdot\deg(H^{p-1})$ which is not divisible by $p^2$.
It remains to show that for any $j>0$ the degree
$$
\deg\Big((b_i^Y\times
H^{p-1-r})\cdot\Steen^j_{\sigma^r}\Big)=
\deg\Big(b_i^Y\cdot\pr_{1*}\big(\Steen^j_{\sigma^r}\cdot(1\times
H^{p-1-r})\big)\Big)
$$
(everything is over $F(X)$ although we omit the subscription $F(X)$)
is divisible by $p^2$.
We have
$$
\Steen(\sigma^r)=\sum_{j_1+\dots+j_r=j}\Steen^{j_1}(\sigma)\cdot\ldots\cdot
\Steen^{j_r}(\sigma).
$$

As in the end of Subsection \ref{Rationality of Steenrod operations},
for any $j>0$, we have
\begin{equation}
\label{dcmp}
\Steen^j_\sigma=p\theta_j+1\times\Steen^j_H-\Steen^j_H\times1
\end{equation}
for some $\theta_j\in\CH^{b+j}(X\times X)_{F(X)}$ and a rational
integral representative $\Steen^j_H$ of $\Steen^j(H)$.

Let us decompose as in (\ref{dcmp}) every factor with positive superscript of the product
$$
\Steen^{j_1}_\sigma\cdot\ldots\cdot\Steen^{j_r}_\sigma,
$$
expand the product and consider an arbitrary summand $P$ of the expansion.
We want to show that the degree
\begin{equation}
\label{degree}
\deg\Big((b_i^Y\times
H^{p-1-r})\cdot P\Big)=
\deg\Big(b_i^Y\cdot\pr_{1*}\big(P\cdot(1\times
H^{p-1-r})\big)\Big)
\end{equation}
is divisible by $p^2$.

If $P$ contains a factor of the type $p\theta_{?}$ (at least) one time,
then the result is divisible by $p^2$ by Corollary \ref{ratF(X)rat} applied to the right-hand side presentation
of the degree in (\ref{degree}).

Let us assume that $P$ contains no $\theta_?$ as a factor.
Then $P$ must contain at least one factor of the second or of the third
type.
Moreover, any factor of $P$ is either $\sigma$ or $1\times\Steen^?_H$
(a second type factor) or $\Steen^?_H\times1$ (a third type factor).
If a factor of the second type is present, the degree is divisible by $p^2$
by Corollary \ref{preasb} applied to the left-hand side presentation
of the degree in (\ref{degree}).
If there is no factor of the second type, then already
$\pr_{1*}\Big(P\cdot(1\times H^{p-1-r})\Big)=0$ showing that the degree
(in its right-hand side presentation) is $0$.
\end{proof}

}

%\bibliographystyle{acm}
%\bibliography{../nikita}

\def\cprime{$'$}

\end{document}